\DeclareMathOperator{\N}{{\mathbb N}}
\DeclareMathOperator{\R}{{\mathbb R}}
\DeclareMathOperator{\PP}{{\mathbb P}}
\newcommand{\EE}[1]{{\mathbb E}\left[#1\right]}
\newcommand{\norme}[1]{\left\| #1\right\|}
\newcommand{\block}{\mathbb{B}}
\newcommand{\abs}[1]{\left| #1\right|}
\def\b{\begin{equation}}
\def\e{\end{equation}}
\newtheorem{theo}{Theorem}
\newtheorem{lem}{Lemma}
\newtheorem{rem}{Remark}
\newtheorem{prop}{Proposition}
\def\S{{\cal S}_{\mu}}
\def\mS{{{\cal S}_{\mus}}}
\def\Sw{{\cal S}_{\mu,g}}
\def\mSw{{\cal S}_{\mus,g}}
\def \mus{{\tilde \mu}}
\def \bl{\left\langle }
\def \br{ \right\rangle }
\newcommand{\BB}[2]{{\bl #1 , #2 \br}}
\begin{document}

\begin{frontmatter}
\title{Multifractal analysis in a mixed asymptotic
framework}

\begin{aug}
\author{\fnms{Emmanuel} \snm{Bacry}
\ead[label=e1]{emmanuel.bacry@polytechnique.fr}},
\author{\fnms{Arnaud}
\snm{Gloter}\ead[label=e2]{arnaud.gloter@univ-mlv.fr}},
\author{\fnms{Marc} \snm{Hoffmann}
\ead[label=e3]{marc.hoffmann@univ-mlv.fr}} \and
\author{\fnms{Jean Fran\c{c}ois} \snm{Muzy}  \ead[label=e4]{muzy@univ-corse.fr}}


\affiliation{Ecole Polytechnique, Universit\'e Paris-Est and
Universit\'e de Corse}

\address{   Centre de Math\'ematiques appliqu\'ees, \\
  Ecole polytechnique, \\
  91128 Palaiseau, France \\
\printead{e1}}

\address{
Laboratoire d'Analyse et de Math\'ematiques Appliqu\'ees, \\
  Universit\'e Paris-Est, \\
  5, Bd Descartes, Champs-sur-Marne, France \\
\printead{e2}\\ \printead{e3}}

\address{
CNRS UMR 6134,
  Universit\'e de Corse,
  \\
  Quartier Grosseti, 20250, Corte, France\\
\printead{e4}}
\end{aug}

\begin{abstract}
Multifractal analysis of multiplicative random cascades is
revisited within the framework of {\em mixed asymptotics}. In this
new framework, statistics are estimated over a sample which size
increases as the resolution scale (or the sampling period) becomes
finer. This allows one to continuously interpolate between the
situation where one studies a single cascade sample at arbitrary
fine scales and where at fixed scale, the sample length (number of
cascades realizations) becomes infinite. We show that scaling
exponents of ''mixed'' partitions functions i.e., the estimator of
the cumulant generating function of the cascade generator distribution,
depends on some ``mixed asymptotic'' exponent $\chi$ respectively
above and beyond two critical value $p_\chi^-$ and $p_\chi^+$.
We study the convergence properties of partition functions in
mixed asymtotics regime and establish a central limit theorem.
These results are shown to remain valid within a general wavelet analysis
framework. Their interpretation in terms of Besov frontier are discussed.
Moreover, within the mixed asymptotic framework, 
we establish a ``box-counting'' multifractal formalism  that can be seen
as a rigorous formulation of Mandelbrot's negative dimension theory.
Numerical illustrations of our purpose on specific examples are
also provided.
\end{abstract}

\end{frontmatter}

\section{Introduction}

Multifractal processes have been used successfully in many
applications which involve series with invariance scaling
properties. Well known examples are fully developed turbulence where
such processes are used to model the velocity or the
dissipation energy fields \cite{Fri95} or finance, where they have been
shown to reproduce very accurately the major ``stylized facts'' of
return time-series \cite{BouPot03,CalFisMan97,MuzDelBac00}. Since
pioneering works of Mandelbrot \cite{Man74a,Man74b}, Kahane and
Peyri\`ere \cite{KahPey76}, a lot of mathematical studies have been
devoted to multiplicative cascades, denoted in sequel as ${\cal
M}$-cascades (see e.g. refs \cite{Gui90,Liu02,Mol96,OssWay00}). One
of the central issues of these studies was to understand how the
partition function scaling exponents (hereafter denoted as
$\tau_0(q)$), are related, on one hand, to the cumulant generating
function of cascade weight distribution and, on the other hand, to
the regularity properties of cascade samples. Actually, the goal of the multifractal formalism
is to directly relate the function $\tau_0(q)$ to the so-called singularity spectrum, i.e., the Hausdorff dimension of the set of all
the points corresponding to given H\"older exponent.
Let us mention that
recently continuous versions of multiplicative cascades have been
introduced \cite{BacMuz03,BarMan02} : they share most of properties
with discrete cascades but do not involve any preferential scale
ratio and remain invariant under time translation. In these
constructions, the analog of the integral scale $T$, i.e., the
coarsest scale where the cascade iteration begins, is a correlation
time.

In all the above cited references, the main results concern one
single cascade over one integral scale $T$ in the limit of arbitrary
small sampling scale. However, in many applications (e.g., the
above turbulence experiments) there is no reason a priori that the
length of the experimental series corresponds to one (or few)
integral scale(s). From a general point of view, as long as modeling
a discrete (time or space) series with a cascade process is
concerned, three scales are involved : (i) the resolution scale $l$
which corresponds to the sampling period of the series, (ii) the
{\em integral (or correlation) scale} $T$ and (iii) the size $L$ of
the whole series. Using these notations,  the total number of
samples of the series is
\begin{equation*} 
N = \frac L l.
\end{equation*}
Therefore, when modeling a discrete series with a multifractal
process, various types of asymptotics for $N\rightarrow +\infty$ can
be defined. The ``high resolution asymptotics'' considered in the
literature, corresponds to $l \rightarrow 0$ whereas $L$ is fixed.
On the other side, one could also consider the ``infinite historic
asymptotics'' that corresponds to $L\rightarrow +\infty$ whereas $l$
is fixed. If we define $N_{T}$ to be the number of integral scales
involved in the series
\begin{equation*}
N_{T} = \frac L T,
\end{equation*}
and $N_{l}$ the
number of samples per integral scale
\begin{equation} \label{eq:Nl}
N_{l} = \frac  T l,
\end{equation}
then we have
\begin{equation*}
 N =
N_{T} N_{l}.
\end{equation*}
Thus, the high resolution asymptotics corresponds to $N_{T}$ fixed
and $N_{l}\rightarrow +\infty$ whereas the infinite historic
asymptotics corresponds to $N_{l}$ fixed and $N_{T}\rightarrow
+\infty$. But in many applications, it is clear that since the
relative values of $N_{T}$ and $N_{l}$ can be arbitrary, it is not
obvious that one of the two mentionned asymptotics can account
suitably for situation. This leads us to consider an asymptotics
according to which $N_{T}$ and $N_{l}$ go to infinity (and therefore
$N$ goes to infinity) and at the same time preserve their relative
``velocities'', i.e., the ratio of their logarithm. Some of us, have
already suggested the following ``mixed asymptotics''
\cite{Koz06,MuzBaBaPo07,MuzBacKoz06}~:
\begin{equation*}
 N_{T} = N_{l}^\chi,
\end{equation*}
where $\chi \in \R^+$ is a fixed  number that quantifies the
relative velocities of  $N_{T}$ and $N_{l}$. Thus,
\begin{itemize}
\item $\chi = 0$ corresponds to the  high resolution asymptotics,
\item $\chi \rightarrow +\infty$ corresponds to the  infinite historic asymptotics,
\end{itemize}
and all other values are truly ``mixed'' asymptotics. Successful
applications of the mixed asymptotics have already been performed
\cite{MuzBaBaPo07,MuzBacKoz06}. In this paper we revisit the
standard problems of (i) the estimation of cascade generator
cumulant generating function in the mixed asymptotic framework and
of (ii) the multifractal formalism or of how to relate this function
to a dimension-like quantity.

The paper is organized as follows: in section \ref{Sec:mcdef} we
recall basic definitions and properties of ${\cal M}$-cascades.
Section \ref{Sec:mas} contains the main results of this paper. If we
define a multifractal measure $\tilde{\mu}$ as the concatenation of
independent $\cal{M}$-cascades of length $T$, with common generator
law $W$, then we show in Theorem \ref{th:MSscaling}:
$$
\frac{1}{\log{(1/l)}} \log \left( N^{-1} \sum_{k=0}^{N-1}
\tilde{\mu} \left( [kl,(k+1)l] \right)^p \right) \rightarrow p -
\log_2 E[W^p]:=\tau(p)+1
$$
for $p$ in some range $(p_\chi^-,p_\chi^+)$. These critical
exponents $p_\chi^-$, $p_\chi^+$ are related to the two solutions,
$h_\chi^-$, $h_\chi^+$ of the equation $D(h)=-\chi$ where
\begin{equation*}
\nonumber
D(h)=\inf_p \{ph - \tau(p) \}
\end{equation*}
is the Legendre transform of $\tau$. The convergence rate is studied in Section \ref{TCL}.
 Let us stress that the range of
validity on $p$ of this convergence  is wider in the mixed
asymptotic framework ($\chi>0$) than in the high resolution asymptotic
($\chi=0$). As a consequence we can relate $D(h)$ to a "box-counting dimension" (sometimes referred to as a box dimension \cite{Jaf04}
or a coarse-grain spectrum \cite{Rie99}), and
derive, as stated in  Theorem
\ref{T:formalism_D_neg}, a "box-counting multifractal formalism" for  $\tilde{\mu}$
$$
\frac{1}{N_T} \# \{k\in\{0,\dots,N\} \mid \tilde{\mu} \left(
[kl,(k+1)l] \right) \in [l^{h-\varepsilon},l^{h+\varepsilon}] \}
\simeq l^{-D(h)}
$$
in the range of $[h_\chi^+,h_\chi^-]$.  
Since for $\chi > 0$, $D(h)$ can take negative values in previous equation, 
this can be seen as a rigorous formulation of Mandelbrot's negative dimension theory \cite{Man90a,Man03}.
%
In section \ref{Sec:wavelets}, we extend previous results to
partition functions relying on some arbitrary wavelet decomposition
of the process. In Section \ref{S:Link_with} we give an interpretation
of the results connected with the Besov frontier associated with our
multifractal measure.
Finally, in section \ref{Sec:num} we discuss some specific examples where the
law of the cascade generator is respectively log-normal, log-Poisson
and log-Gamma. For illustration purpose, we also report, in each
case, estimations performed from numerical simulations.
Auxiliary useful Lemmas are moved to Appendices.

\section{ ${\cal M}$-cascades : Definitions and properties}
\label{Sec:mcdef}
\subsection{Definition of the ${\cal M}$-cascades}
\label{sec:mcascades} Let us first introduce some  notations. Given
a $j$-uplet $r=(r_{1},\ldots,r_{j})$, for all strictly positive
integer $i \le j$, we note $r|i$ the restriction of the $j$-uplet to
its first $i$ components, i.e.,
\begin{equation*} r|i =
(r_{1},\ldots,r_{i}),~~\forall i \in \{1,\dots,j\}.
\end{equation*}
By convention, if $j = 0$, we consider that $r = \emptyset$ and in
the sequel, we denote by $rr'$ the $j+j'$-uplet obtained by
concatenation of $r\in \{0,1\}^j$ and $r'\in \{0,1\}^{j'}$.
Moreover, we note
\begin{equation*} \overline r =
\begin{cases}
 2^j\sum_{i=1}^{j} r_{i}2^{-i},~~\mbox{  if } r \neq \emptyset \\
 0~~~~~~~~~~~~~~~~~~\mbox{  if } r = \emptyset
 \end{cases}.
\end{equation*}
Let fix $T \in \R^{+*}$ and $k\in\N$. We define $I_{j,k}$ as the
interval
\begin{equation} \label{eq:achanger} I_{j,k} =
[k2^{-j}T,(k+1)2^{-j}T].
\end{equation}
Thus, for any $j\in \N^*$,
the interval $[0,T]$ can be decomposed as $2^j$ dyadic intervals :
\begin{equation*}
[0,T] = \bigcup_{r\in\{0,1\}^j} I_{j,\overline r}.
\end{equation*}
Let us now build the so called ${\cal{M}}$-cascade measures
introduced by Mandelbrot in 1974 \cite{Man74b}. Let
$\{W_{r}\}_{r\in\{0,1\}^{j},~j\in\N^*}$ be a set of i.i.d random
variables of mean $\EE{W_{r}} = 1$. Given $j\in\N^*$, we define the
random measure $\mu_{j}$ on $[0,T]$ such that, for all $r \in
\{0,1\}^j$, the Radon-Nikodym derivative with respect to the
Lebesgue measure $\frac{\text{d} \mu_j}{ \text{d} x}$ is constant on
$I_{j,\overline{r}}$ with:
\begin{equation} \label{eq:def_mul}
\frac{\text{d} \mu_j }{ \text{d} x}= \prod_{i=1}^j W_{r|i}, \quad
\text{on $I_{j,\overline{r}}$, for $r \in \{ 0,1\}^j$.}
\end{equation}
As it is well known
\cite{KahPey76}, the measures $\mu_{j}$ have a non-trivial limit
measure $\mu_{\infty}$, when $j$ goes to $\infty$, as soon as $
\EE{W\log_{2}W} < 1$.
 Moreover, the total mass
 $$
\mu_{\infty}\left([0,T]\right) = \lim_{j \to \infty} T 2^{-j} \sum_{
r \in \{0,1\}^j} \prod_{i=1}^{j} W_{r \mid i},
$$
 verifies $ \EE{\mu_{\infty}\left([0,T]\right) } = T$. Let us remark
 that if $r \in \{0,1 \}^j$ then by construction we have:
 \begin{align} \nonumber
\mu_\infty ( I_{j,\overline{r}}) &= \lim_{n \to \infty} T 2^{-j}
\prod_{i=1}^j W_{r \mid i}  \left( \sum_{r' \in \{0,1 \}^n} 2^{-n}
\prod_{i=1}^n W_{rr' \mid (j+i)} \right)
\\ \label{E:auto_sim_simple}
&= 2^{-j} \left( \prod_{i=1}^j W_{r \mid i} \right)
\overline{\mu}_{\infty}^{(r)} \left( [0,T] \right),
 \end{align}
where $\overline{\mu}_{\infty}^{(r)} $ is a $\mathcal{M}$-cascade
measure on $[0,T]$ based on the random variables $W_{rr'}$ for $r'
\in \cup_{j\ge1} \{0,1 \}^j$. This equality is usually referred to as
''Mandelbrot star equation''.

In the
sequel we need the following  set of assumptions:
\begin{align} \label{eq:hypo1}
 \EE{W\log_{2}W} < 1, ~ \PP(W=1)<1,
 \\
  \PP(W>0) = 1 ,~ \EE{W^p}<\infty \text{ for all $p \in \R$.}
 \label{eq:momentsW}
 \end{align}
Let $\tau(p)$ be the smooth and concave function defined on $\R$
by
\begin{equation} \label{eq:taup}
\tau(p) = p-\log_{2}\EE{W^p}-1.
\end{equation}
Let us notice that $\log_2 \EE{W^p}$ is nothing but the cumulant
generating function (log-Laplace transform) of the logarithm of
cascade generator distribution. It is shown in \cite{KahPey76} that
for $p >1$, the condition $\tau(p) >0$ implies the finiteness of
$\EE{\mu_{\infty}\left([0,T]\right)^p }$. By Theorem 4 in
\cite{Mol96}, the conditions \eqref{eq:momentsW} imply the existence
of finite negative moments $\EE {
\mu_{\infty}\left([0,T]\right)^p}$, for all $p<0$.

\subsection{Multifractal properties of ${\cal M}$-cascades}
A {\cal M}-cascade is a multifractal measure and the study of its
multifractal properties reduces to the study of the {\em partition
function }
\begin{equation}
\label{eq:partition}
\S(j,p) = \sum_{k=0}^{2^j-1} \mu_{\infty}(I_{j,k})^p.
\end{equation}
\label{sec:scaling} Basically, one can show \cite{Mol96,OssWay00}
that, for fixed $p$,  this partition function behaves, when $j$ goes
to $\infty$, as a power law function of the scale $|I_{j,k}| = T2^{-j}$.
More precisely,
let us introduce the two following critical exponents:
\begin{align*}
p_0^+=\inf \{p \ge 1 \mid p \tau'(p)-\tau(p) \le 0\} \in (1,\infty]
\\
p_0^-=\sup \{p \le 0 \mid p \tau'(p)-\tau(p) \le 0\} \in
[-\infty,0).
\end{align*}
If $p_0^+$ (resp. $p_0^-$) is finite we set $h_0^+=\tau'(p_0^+)$
(resp. $h_0^-=\tau'(p_0^-)$).

\begin{theo}
\label{th:Sscaling}
{\bf Scaling of the partition function} \cite{OssWay00} \\
Let $p \in  \R$, the power law scaling exponent of $\S(j,p)$ is given by
\begin{equation}
\label{eq:partitionscaling} \lim_{j \rightarrow \infty}
\frac{\log_{2}\S(j,p)}{-j}
\operatornamewithlimits{\longrightarrow}_{a.s.}  \tau_{0}(p),
\end{equation}
 where $\tau_{0}(p)$ is defined by
 \begin{equation}
\label{eq:tau0} \tau_0(p) = \left\{
\begin{array}{ll}
 \tau(p), & \forall p\in (p_0^-,p_0^+) \\
 h_0^+ p,  & \forall p\ge p_0^+ \\
 h_0^- p,  & \forall p\le p_0^-
\end{array}
\right. .
\end{equation}
\end{theo}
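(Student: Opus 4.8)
The plan is to treat the three regimes of \eqref{eq:tau0} separately. Write $m_r = \mu_\infty(I_{j,\overline r})$ for $r \in \{0,1\}^j$ and, via the star equation \eqref{E:auto_sim_simple}, factorise $m_r = 2^{-j}\big(\prod_{i=1}^j W_{r|i}\big)\,\overline{\mu}_{\infty}^{(r)}([0,T])$, where the leaf masses $\overline{\mu}_{\infty}^{(r)}([0,T])$ are i.i.d.\ copies of $\mu_\infty([0,T])$ independent of the weights down to level $j$. Taking expectations and using independence along paths gives $\EE{\S(j,p)} = 2^{j(1-p)}(\EE{W^p})^j\,\EE{\mu_\infty([0,T])^p}$ whenever this last moment is finite, which, by the moment discussion following \eqref{eq:taup}, holds on all of $(p_0^-,p_0^+)$. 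Hence $\tfrac{1}{-j}\log_2\EE{\S(j,p)}\to\tau(p)$, and a Markov inequality together with Borel--Cantelli upgrades this into the almost-sure lower bound $\liminf_j \tfrac{1}{-j}\log_2\S(j,p)\ge\tau(p)$ on the inner regime.

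For the matching upper bound on the exponent inside $(p_0^-,p_0^+)$ I would pass to the tilted weight $\tilde W = W^p/\EE{W^p}$ and its Kahane--Peyri\`ere martingale $M_j = 2^{-j}(\EE{W^p})^{-j}\sum_{r\in\{0,1\}^j}\big(\prod_{i=1}^j W_{r|i}\big)^p$. A direct computation gives $\EE{\tilde W\log_2\tilde W} = 1-(p\tau'(p)-\tau(p))$, so the non-degeneracy condition $\EE{\tilde W\log_2\tilde W}<1$ is \emph{exactly} $p\tau'(p)-\tau(p)>0$, i.e.\ the defining condition of $(p_0^-,p_0^+)$. On this range $M_j$ converges almost surely to a strictly positive limit, so $\sum_r\big(\prod_i W_{r|i}\big)^p \asymp 2^j(\EE{W^p})^j$ up to subexponential factors, and a weighted law of large numbers (or the $L^1$-convergence of the martingale) absorbs the i.i.d.\ leaf factors $\overline{\mu}_{\infty}^{(r)}([0,T])^p$ into a multiplicative constant. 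This yields $\limsup_j \tfrac{1}{-j}\log_2\S(j,p)\le\tau(p)$ and closes the inner regime.

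In the outer regimes the first moment is no longer sharp, since $\tau(p)<\tau_0(p)$ for $p>p_0^+$ means the mean is inflated by rare blocks, and the almost-sure scaling is instead governed by the extremal block. For $p\ge p_0^+$, using $m_r\le\max_s m_s$ together with $p-p_0^+\ge 0$ yields the sandwich $(\max_s m_s)^p \le \S(j,p) \le (\max_s m_s)^{p-p_0^+}\,\S(j,p_0^+)$. Granting the almost-sure limit $\tfrac{1}{-j}\log_2\max_s m_s\to h_0^+$ and the boundary scaling $\tfrac{1}{-j}\log_2\S(j,p_0^+)\to\tau(p_0^+)$, and invoking the identity $\tau(p_0^+)=p_0^+ h_0^+$ (equivalently $D(h_0^+)=0$) built into the definition of $p_0^+$, both ends of the sandwich carry exponent $h_0^+ p$, so $\tfrac{1}{-j}\log_2\S(j,p)\to h_0^+ p$. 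The case $p\le p_0^-$ is symmetric, replacing $\max_s m_s$ by $\min_s m_s$ and $h_0^+$ by $h_0^-$.

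The hard part is the extremal statement $\tfrac{1}{-j}\log_2\max_s m_s\to h_0^+$ (and its counterpart for the minimum), which is where the phase transition is encoded. Along a dyadic path $\log_2\big(\prod_i W_{r|i}\big)$ is a random walk with increments $\log_2 W$, and $\max_s m_s$ is the maximal displacement of the associated branching random walk over $2^j$ leaves; identifying its growth rate with $h_0^+$ is a large-deviation problem in which $D(h)=0$ marks the largest deviation still populated by at least one surviving leaf. The upper bound follows from a first-moment count of exceedances, but the almost-sure lower bound, namely that blocks with exponent arbitrarily close to $h_0^+$ genuinely persist, requires a second-moment or truncated branching argument and is the delicate point. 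A secondary nuisance is the critical case $p=p_0^{\pm}$ appearing in the sandwich, where the tilted martingale degenerates and polynomial-in-$j$ corrections may occur; these do not affect the $\tfrac{1}{-j}\log_2$ limit but must be controlled.
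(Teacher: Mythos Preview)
The paper does not prove this theorem; it simply refers the reader to \cite{OssWay00} (and to \cite{Mol96} for the earlier convergence in probability). The closest thing to a proof \emph{in} the paper is the argument for the mixed-asymptotic generalization, Theorem~\ref{th:MSscaling}, which specializes to $\chi=0$ and is therefore the natural point of comparison.

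On the inner regime $(p_0^-,p_0^+)$ your approach and the paper's are close in spirit: both hinge on the condition $p\tau'(p)-\tau(p)>0$. Where you invoke non-degeneracy of the tilted Kahane--Peyri\`ere martingale and then an informal ``weighted law of large numbers'' to absorb the leaf factors $\overline\mu_\infty^{(r)}([0,T])^p$, the paper (Proposition~\ref{P:cascade_conser_mixte} via Lemmas~\ref{lem:1}--\ref{lem:2}) bounds $\EE{|2^{j\tau(p)}\S(j,p)-\EE{\mu_\infty([0,T])^p}|^{1+\epsilon}}$ directly by a geometric sum and closes with Borel--Cantelli. That route handles the leaf masses and the tree weights in one stroke and needs no separate LLN step; your martingale argument is morally right, but the absorption of the leaves is exactly where a referee would ask you for details.

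On the outer regime the two strategies diverge substantially, and the paper's is logically shorter. You \emph{assume} the extremal scaling $\tfrac{1}{-j}\log_2\max_r m_r\to h_0^+$ (which you correctly flag as the hard branching-random-walk step) and then sandwich $\S(j,p)$ between powers of the maximum. The paper's argument (Section~\ref{sec:proofScaling2}, adapted from \cite{ResSamGilWil03}) runs in the \emph{opposite} direction: using only the sub-additivity of $x\mapsto x^\rho$ and the elementary inequality $\S(j,q)\le \S(j,p)^{(q-q')/p}\S(j,q')$, it squeezes $m_{inf}(p)/p$ and $m_{sup}(p)/p$ between difference quotients of the already-established inner-regime exponent and reads off $\tau_0(p)=h_0^+ p$ by pure convexity. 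The extremal scaling then drops out as a \emph{corollary} (Theorem~\ref{th:MSup}), not a prerequisite. So the second-moment/truncation argument you identify as ``the delicate point'' is, in the paper's organization, bypassed entirely.
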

The proof can be found in \cite{OssWay00}. The convergence in
probability of \eqref{eq:partitionscaling} was obtained in the
earlier work \cite{Mol96}. This  theorem basically states that
$\S(j,p)$ behaves like
\begin{equation*}
\S(j,p) \simeq 2^{-j\tau_{0}(p)}.
\end{equation*}
 Let us
note that the partition  function \eqref{eq:partition} can be
rewritten in the following way
\begin{equation}
\label{eq:partition1} \S(j,p) = \sum_{r\in\{0,1\}^j}
\mu_{\infty}(I_{j,\overline r})^p
\end{equation}
 and using
(\ref{E:auto_sim_simple}), one gets
\begin{equation} \label{eq:Srep} \S(j,p) =
2^{-jp}\sum_{r\in\{0,1\}^j} \prod_{i=1}^j W_{r|i}^p
\left(\bar\mu^{(r)}_{\infty}([0,T])\right)^p,
\end{equation}
where the $\{\bar \mu^{(r)}_{\infty}([0,T])\}_{r\in\{0,1\}^j}$ are
i.i.d. random variables with the same law as $ \mu_{\infty}([0,T])$.
Thus, a simple computation shows that
\begin{equation*}
 \EE{\S(j,p)} =
2^{-jp}2^j\EE{W^p}^j \EE{\mu_{\infty}([0,T])^p} = 2^{-j\tau(p)}
\EE{\mu_{\infty}([0,T])^p}.
\end{equation*}
One sees that the last theorem states that, in the case $p\in
[p_0^-,p_{0}^+]$,   $\S(j,p)$ scales as its mean value.

On the other hand, the fact that for $p \notin [p^-_{0}, p^+_{0}]$
the partition function scales as given in \eqref{eq:tau0} instead of
scaling as its mean value, is referred to as the 'linearization
effect'. A possible explanation of this effect is that for $p$
larger than the critical exponents $p^+_0$ (resp. smaller than
$p^-_0$), the sum involved in the partition function
\eqref{eq:partition1} is dominated by its supremum (resp. infimum) term. Thus one
should not expect a law of large number to hold for the behavior of
this sum.
Another possible interpretation of this theorem in the case
$p>p_{0}^+$ is given in Section \ref{S:Link_with}.

\section{Mixed asymptotics for ${\cal M}$-cascades}
\label{Sec:mas}
\subsection{Mixed asymptotics : definitions and notations}
A convenient way to construct a multifractal measure on
$\mathbb{R}_+$, with an integral scale equal to $T$, is to patch
independent realizations of $\mathcal{M}$-cascades measures.
More precisely, consider $\{\mu_{\infty}^{(m)}\}_{m\in\N}$ a
sequence of i.i.d ${\cal M}$-cascades on $[0,T]$ as defined in
Section \ref{sec:mcascades} and define the stochastic measure on
$[0,\infty)$ by:
\begin{equation} \label{eq:mus}
\mus \left([t_{1},t_{2}]\right) = \sum_{m=0}^{+\infty}
\mu_{\infty}^{(m)}\left( [t_{1}-mT,t_{2}-mT]\right), \quad \text{
for all $0 \le t_1 \le t_2$.}
\end{equation}
This model is entirely defined as soon as both $T$ and the law of
$W$ are fixed. The discretized time model for the $N$ samples of the
series is $\{\mus[kl,(k+1)l]\}_{0\le k< N-1}$.

\subsection{Scaling properties}
In this section, we study the partition function for the measure
$\mus$ as defined in Eq. \eqref{eq:mus} in the mixed asymptotic
limit. $T$ is fixed, we choose the sampling step
\begin{equation*} l = T 2^{-j},~N_{l}
= 2^{j},
\end{equation*}
 and the number of integral scales is
related with the sampling step as
\begin{equation*}
N_{T} = \lfloor N_{l}^\chi \rfloor \sim 2^{j\chi},
\end{equation*}
with $\chi > 0$ fixed. According to \eqref{eq:Nl}, one gets for the
total number of data:
\begin{equation*}
N = N_{T}2^j \sim 2^{j(1+\chi)}.
\end{equation*}
The mixed asymptotics corresponds to the limit $j\rightarrow
+\infty$. The partition function of $\mus$ can be written as (recall
\eqref{eq:achanger}):
\begin{eqnarray}
\label{eq:partitionmix}
\mS(j,p) & = & \sum_{k=0}^{N-1}   \mus(I_{j,k})^p \\
\label{eq:partitionmix1}
& = &\sum_{m=0}^{N_{T}-1}  \S^{(m)}(j,p),
\end{eqnarray}
where $\S^{(m)}(j,p)$ is the partition function of $\mu_{\infty}^{(m)}$, i.e.,
\begin{equation}
\label{eq:Sm}
 \S^{(m)}(j,p) = \sum_{k=0}^{2^j-1} \mu_{\infty}^{(m)}(I_{j,k})^p.
 \end{equation}
Let us state the results  of this section. We introduce the two
critical exponents in the mixed asymptotic framework:
\begin{align}
p_\chi^+=\inf \{p \ge 1 \mid p \tau'(p)-\tau(p) \le -\chi \} \in
(1,\infty] \label{E:defPchiplus}
\\
p_\chi^-=\sup \{p \le 0 \mid p \tau'(p)-\tau(p) \le -\chi \} \in
[-\infty,0), \label{E:defPchimoins}
\end{align}
and we set when these critical exponents are finite
$h_\chi^+=\tau'(p_\chi^+)$, $h_\chi^-=\tau'(p_\chi^-)$.
\begin{theo}
\label{th:MSscaling}
{\bf Scaling of the partition function in a mixed asymptotics } \\
Let $p\in\R$ and $\mus$ be the random measure defined by
\eqref{eq:mus} where the law of $W$ satisfies
\eqref{eq:hypo1}--\eqref{eq:momentsW}. We assume that, either
$p_\chi^+=\infty$, or $p_\chi^+<\infty$ with $\tau(p_\chi^+)>0$.
Then, the power law scaling of $\mS(j,p)$ is given by
\begin{equation} \label{eq:Mpartitionscaling} \lim_{j \rightarrow
\infty} \frac{\log_{2}\mS(j,p)}{-j}
\operatornamewithlimits{\longrightarrow}_{a.s.} \tau_{\chi}(p),
\end{equation}
where $\tau_{\chi}(p)$ is defined by
\begin{equation*}
 \tau_\chi(p) = \left\{
\begin{array}{ll}
 \tau(p)-\chi, & \forall p\in (p_\chi^-,p_\chi^+) \\
 h_\chi^+ p,  & \forall p\ge p_\chi^+ \\
  h_\chi^- p,  & \forall p\le p_\chi^-
\end{array}
\right. .
\end{equation*}

\end{theo}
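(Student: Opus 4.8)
The plan is to exploit the representation $\mS(j,p)=\sum_{m=0}^{N_T-1}\S^{(m)}(j,p)$ as a sum of $N_T\sim 2^{j\chi}$ independent copies of the single-cascade partition function $\S(j,p)$, and to prove the two one-sided bounds $\liminf_j \log_2\mS(j,p)/(-j)\ge \tau_\chi(p)$ and $\limsup_j \log_2\mS(j,p)/(-j)\le \tau_\chi(p)$ separately, for each fixed $\varepsilon>0$ and then along $j\in\N$ via Borel--Cantelli. The guiding heuristic is that pooling $N_T\sim 2^{j\chi}$ independent cascades multiplies by $2^{j\chi}$ the expected number of dyadic boxes of prescribed coarse exponent $h$ (i.e. with $\mu_\infty^{(m)}(I_{j,k})\approx 2^{-jh}$), which is $\sim 2^{jD(h)}$ in a single cascade; hence boxes with $D(h)\ge-\chi$ now occur in the pooled sample, the dominant contribution to $\mS(j,p)$ comes from the exponent $h=\tau'(p)$ when $D(\tau'(p))\ge-\chi$ (that is, $p\in(p_\chi^-,p_\chi^+)$), and it saturates at the critical value $h_\chi^{\pm}$ solving $D(h)=-\chi$ otherwise. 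This is exactly what produces the two regimes of $\tau_\chi$.

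I would first establish $\liminf_j\log_2\mS(j,p)/(-j)\ge\tau_\chi(p)$, equivalently the a.s. upper bound $\mS(j,p)\le 2^{-j(\tau_\chi(p)-\varepsilon)}$ eventually. For $p>0$ this follows from a fractional-moment estimate: using subadditivity of $x\mapsto x^s$ for $s\in(0,1]$ twice, first across the independent cascades and then across the $2^j$ boxes of one cascade, one gets $\EE{\mS(j,p)^s}\le N_T\,2^j\,\EE{\mu_\infty(I_{j,1})^{ps}}=2^{j(\chi-\tau(ps))}\EE{\mu_\infty([0,T])^{ps}}$. Optimising the resulting Markov bound $\PP(\mS(j,p)\ge 2^{-j\theta})\le 2^{j(s\theta+\chi-\tau(ps))}\,\EE{\mu_\infty([0,T])^{ps}}$ over $s$ leads to the choice $ps=p_\chi^+$, i.e. $s=\min(1,p_\chi^+/p)$, for which $(\tau(ps)-\chi)/s=\tau_\chi(p)$ thanks to the identity $p_\chi^+h_\chi^+=\tau(p_\chi^+)-\chi$ coming from $D(h_\chi^+)=-\chi$; summability in $j$ and Borel--Cantelli then give the claim. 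The hypothesis that either $p_\chi^+=\infty$ or $\tau(p_\chi^+)>0$ enters precisely here, since it guarantees finiteness of $\EE{\mu_\infty([0,T])^{p_\chi^+}}$ via \cite{KahPey76}; on $(0,p_\chi^+)$ the optimum is $s=1$ and concavity of $\tau$ with $\tau(1)=0$ ensures the relevant moment is finite. The case $p<0$ is symmetric, using the finiteness of all negative moments from \cite{Mol96} (so no extra assumption on $p_\chi^-$ is needed), and $p=0$ is trivial since $\mS(j,0)=N\sim 2^{j(1+\chi)}$.

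The reverse bound $\limsup_j\log_2\mS(j,p)/(-j)\le\tau_\chi(p)$ is the substantial part. Here I would lower bound $\mS(j,p)$ by the contribution of boxes whose coarse exponent lies in a small window around the dominant value $h^\dagger$, namely $h^\dagger=\tau'(p)$ in the central regime and $h^\dagger=h_\chi^+ +\delta$ when $p\ge p_\chi^+$. Writing $Z_j$ for the number of pooled boxes $(m,k)$ with $\mu_\infty^{(m)}(I_{j,k})\in[2^{-j(h^\dagger+\delta)},2^{-j(h^\dagger-\delta)}]$, one has $\mS(j,p)\ge Z_j\,2^{-jp(h^\dagger+\delta)}$ for $p>0$, so it suffices to show that $Z_j\ge 1$ (linear regime) or $Z_j\gtrsim\EE{Z_j}$ (central regime) holds eventually a.s. Since $\EE{Z_j}=N_T\,\EE{Z^{(1)}_j}\sim 2^{j(\chi+D(h^\dagger))}$ with $\chi+D(h^\dagger)>0$ by the choice of $h^\dagger$, this is a concentration statement for a sum of $N_T$ independent contributions, and the across-cascade independence is exactly the feature absent from the $\chi=0$ theory that makes the wider range of $p$ accessible. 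I would run a second-moment argument, via $\PP(|Z_j-\EE{Z_j}|\ge\tfrac12\EE{Z_j})\le 4\,\mathrm{Var}(Z_j)/\EE{Z_j}^2$ (and its $\PP(Z_j=0)\le\mathrm{Var}(Z_j)/\EE{Z_j}^2$ specialisation), combined with a single-cascade large-deviation lower bound of the form $\PP(\exists k:\mu_\infty(I_{j,k})\ge 2^{-jh})\gtrsim 2^{jD(h)}$; letting finally $\delta\to0$ recovers the exponents $\tau(p)-\chi$ and $h_\chi^+p$ respectively.

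The main obstacle is controlling $\mathrm{Var}(Z_j)=N_T\,\mathrm{Var}(Z^{(1)}_j)$, which forces a careful treatment of the within-cascade correlations between boxes sharing common ancestors in the multiplicative construction. Using the Mandelbrot star equation \eqref{E:auto_sim_simple} one decomposes $\EE{(Z^{(1)}_j)^2}$ according to the depth of the most recent common ancestor of two boxes; the danger lies in the near-diagonal pairs, where shared large multipliers inflate both boxes simultaneously. The key quantitative point I would have to verify is that this correlation excess grows like $2^{\gamma j}$ with $\gamma<\chi$, so that $\mathrm{Var}(Z_j)/\EE{Z_j}^2\sim 2^{(\gamma-\chi)j}\to 0$ with exponential (hence summable) rate; this is exactly where the constraint $D(h^\dagger)\ge-\chi$, equivalent to $p$ lying in the widened range $[p_\chi^-,p_\chi^+]$ up to the window $\delta$, is genuinely used, and it is the technical heart of the argument. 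Combining this with the fractional-moment upper bound and letting $\varepsilon,\delta\to0$ along $j\in\N$ yields the stated almost sure convergence.
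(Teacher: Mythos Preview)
Your fractional-moment upper bound is correct and, for the direction $\liminf_j \log_2\mS(j,p)/(-j)\ge\tau_\chi(p)$, arguably cleaner than what the paper does; the optimisation at $s=\min(1,p_\chi^{\pm}/p)$ and the use of $\tau(p_\chi^+)>0$ to guarantee the relevant moment is exactly right.

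The lower bound, however, is where your route diverges substantially from the paper's, and where the difficulties concentrate. The paper does \emph{not} go through box-counting. For $p\in(p_\chi^-,p_\chi^+)$ it shows directly that $2^{j(\tau(p)-\chi)}\mS(j,p)\to \EE{\mu_\infty([0,T])^p}$ almost surely (a stronger statement than mere log-scaling) by bounding $\EE{|\mS(j,p)-\EE{\mS(j,p)}|^{1+\epsilon}}$ via a tree decomposition of the square of a single-cascade partition function into diagonal and off-diagonal parts (Lemmas~\ref{lem:1}--\ref{lem:2}); the condition $-p\tau'(p)+\tau(p)<\chi$ appears naturally as the summability criterion. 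For $p\notin(p_\chi^-,p_\chi^+)$ the paper uses a short convexity/interpolation argument adapted from \cite{ResSamGilWil03}: from subadditivity $\mS(j,p)^\rho\le\mS(j,\rho p)$ one gets $m_{\inf}(p)/p\ge\tau_\chi(p_\chi^+)/p_\chi^+=h_\chi^+$, and from $\mS(j,q)\le\mS(j,p)^{(q-q')/p}\mS(j,q')$ with $q,q'<p_\chi^+$ one gets $m_{\sup}(p)/p\le\tau_\chi'(q)\to h_\chi^+$. No large-deviation or second-moment machinery is needed.

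Your second-moment program for $Z_j$ is plausible, but note two points. First, the paper derives the box-counting formalism (Theorem~\ref{T:formalism_D_neg}) \emph{from} Theorem~\ref{th:MSscaling}, not the other way round; in particular, its proof of the key lower bound on $\#\{k:\tilde\mu(I_{j,k})\ge 2^{-jh}\}$ relies on the convergence \eqref{E:cv_partition_renor} of the rescaled partition function, so you would have to supply an independent argument. Second, the step you flag as the ``technical heart''---showing $\mathrm{Var}(Z_j^{(1)})$ is small enough that $\mathrm{Var}(Z_j)/\EE{Z_j}^2$ is summable---is genuinely delicate: the within-cascade correlations have to be controlled uniformly over the LCA depth, and one also needs the large-deviation \emph{lower} bound $\PP(Z_j^{(1)}\ge1)\gtrsim 2^{jD(h)}$ (e.g.\ via Paley--Zygmund, which again requires the same second-moment control). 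This can be carried out, but it is considerably heavier than the paper's convexity trick, which dispatches the linear regime in a few lines once the central regime is known.
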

\begin{rem} \label{R:moment}
If $p_\chi^+=\infty$ then simple considerations on the concave
function $\tau$ shows that $\tau(p)>0$ for all $p >1$, and hence the
cascade measure has finite moments of any positive orders.
Otherwise the
assumption $\tau(p_\chi^+)>0$ is stated in Theorem
\ref{th:MSscaling} to insure $\EE{\mu_{\infty}([0,T])^p}<\infty$ for
$p \in [0, p_\chi^+)$. Such assumption was not needed in Theorem
\ref{th:Sscaling}, since on can check that necessarily
$\tau(p^+_0)>0$.
\end{rem}
\begin{rem} Let us stress that the behavior of the partition
function is largely affected by the choice of a mixed asymptotic:
the 'linearization effect' now occurs for $p$ in the set
$(-\infty,p_\chi^-) \cup (p_\chi^+,\infty)$, which is smaller when
$\chi$ increases.
\end{rem}

\begin{theo}
\label{th:MSup} {\bf Scaling of the supremum and the infimum
of the mass in a mixed asymptotics} \\
Assume \eqref{eq:hypo1}--\eqref{eq:momentsW}. Then, if
$p_\chi^+<\infty$, one has,
\begin{equation} \label{eq:Msupscale} \lim_{j\rightarrow +\infty}
\frac {\log_{2} \sup_{k\in[0,N-1]} \mus(I_{j,k})}{-j} = h_{\chi}^+,
\text{ almost surely,}
\end{equation}
and if $p_\chi^- > - \infty$, one has,
\begin{equation} \label{eq:Minfscale} \lim_{j\rightarrow +\infty}
\frac {\log_{2} \inf_{k\in[0,N-1]} \mus(I_{j,k})}{-j} = h_{\chi}^-,
\text{ almost surely.}
\end{equation}
\end{theo}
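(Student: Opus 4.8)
The plan is to read the extremal masses off the partition function $\mS(j,p)$ of Theorem \ref{th:MSscaling}, exploiting that for large $|p|$ the sum $\mS(j,p)=\sum_{k=0}^{N-1}\mus(I_{j,k})^p$ is governed by its single largest term. Set $M_j^+=\sup_{0\le k<N}\mus(I_{j,k})$ and $M_j^-=\inf_{0\le k<N}\mus(I_{j,k})$; since $\PP(W>0)=1$ every mass is almost surely strictly positive, so all logarithms below are well defined. Because there are $N$ summands, for $p>0$ one has the elementary sandwich $(M_j^+)^p\le\mS(j,p)\le N\,(M_j^+)^p$, and for $p<0$, $x\mapsto x^p$ being decreasing, the same two inequalities hold with $M_j^+$ replaced by $M_j^-$.

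Taking $\log_2$, dividing by $-j$, and inserting $\tfrac{\log_2\mS(j,p)}{-j}\to\tau_\chi(p)$ (Theorem \ref{th:MSscaling}) together with $\tfrac{\log_2 N}{-j}\to-(1+\chi)$, the sandwich yields, for $p>0$,
$$
\frac{1}{p}\,\tau_\chi(p)\le\liminf_{j}\frac{\log_2 M_j^+}{-j}\le\limsup_{j}\frac{\log_2 M_j^+}{-j}\le\frac{1}{p}\bigl(\tau_\chi(p)+1+\chi\bigr).
$$
For $p\ge p_\chi^+$ one has $\tau_\chi(p)=h_\chi^+p$, so the outer terms are $h_\chi^+$ and $h_\chi^++\tfrac{1+\chi}{p}$; letting $p\to+\infty$ squeezes the limit to $h_\chi^+$, which is \eqref{eq:Msupscale}. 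The infimum is symmetric: for $p<0$ the analogous inequalities read
$$
\frac{1}{p}\bigl(\tau_\chi(p)+1+\chi\bigr)\le\liminf_{j}\frac{\log_2 M_j^-}{-j}\le\limsup_{j}\frac{\log_2 M_j^-}{-j}\le\frac{1}{p}\,\tau_\chi(p),
$$
and for $p\le p_\chi^-$, where $\tau_\chi(p)=h_\chi^-p$, letting $p\to-\infty$ (so that $\tfrac{1+\chi}{p}\to0$) pins the limit to $h_\chi^-$, which is \eqref{eq:Minfscale}. The standing hypotheses $p_\chi^+<\infty$ and $p_\chi^->-\infty$ are exactly what make the linear branches available for arbitrarily large $|p|$, so that the correction $\tfrac{1+\chi}{p}$ can be sent to zero.

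The one delicate point is the interchange of the two limits $j\to\infty$ and $p\to\pm\infty$, since Theorem \ref{th:MSscaling} is an almost-sure statement for each fixed $p$, on a null set depending on $p$. I would fix once and for all a countable sequence $p_n\to+\infty$ (resp. $p_n\to-\infty$) in the linear range and work on the intersection of the corresponding full-measure events; this intersection still has full measure, the displayed squeeze holds there simultaneously for every $p_n$, and letting $n\to\infty$ gives the result almost surely. I would also stress where the substance truly lies: the linear branch $\tau_\chi(p)=h_\chi^+p$ for $p\ge p_\chi^+$ is itself the nontrivial input (the regime in which $\mS(j,p)$ no longer scales like its mean), and the moment control it needs is precisely the assumption $\tau(p_\chi^+)>0$ carried by Theorem \ref{th:MSscaling}, whereas \eqref{eq:momentsW} supplies all negative moments for free. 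Should one wish to prove \eqref{eq:Msupscale} without invoking that linearized regime (to avoid circularity, since it is naturally obtained \emph{from} the supremum), the upper bound on $M_j^+$ follows from a first-moment union bound over the $N$ intervals, while the matching lower bound, namely the existence of an interval of mass at least $2^{-j(h_\chi^++\varepsilon)}$, would require a second-moment (Paley--Zygmund) argument exploiting the independence of the $N_T$ cascades together with control of the heavy-tailed terminal masses; that is the step I would expect to be the main obstacle.
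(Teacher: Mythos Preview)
Your argument is correct and is essentially the paper's own proof: the same sandwich $(M_j^\pm)^p\le\mS(j,p)\le N(M_j^\pm)^p$, divided by $-jp$ and sent to $|p|\to\infty$ along the linear branch $\tau_\chi(p)=h_\chi^\pm p$, is exactly how the paper derives Theorem~\ref{th:MSup} as a corollary of \eqref{eq:interm8}. Your explicit handling of the null-set issue via a countable sequence $p_n$ is the right fix (the paper absorbs this into the countable-dense-subset reduction already made in Section~\ref{sec:proofScaling2}); note also that there is no circularity, since the linear regime is established first by subadditivity and convexity arguments, not via the supremum.
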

The theorem \ref{th:MSup} shows that when the 'linearization effect'
occurs, the scaling of the partition function
\eqref{eq:partitionmix} is governed by its supremum and infimum terms
for respectively large positive and negative $p$ values.

These theorems will be proved in three parts. In Section
\ref{sec:proofScaling}, we will prove Eq.
\eqref{eq:Mpartitionscaling} of Theorem \ref{th:MSscaling} only for
$p\in (p_{\chi}^- ,p_{\chi}^+)$. In Section \ref{sec:proofScaling2},
we will prove the case $p \notin (p_{\chi}^- ,p_{\chi}^+)$ and
Theorem \ref{th:MSup} is shown in Section \ref{sec:proofSup} to be a
simple corrolary of this last case.

\subsection{Proof of Theorem \ref{th:MSscaling}}
First we need  an auxiliary result which is helpful in the sequel.
\subsubsection{Limit theorem for a rescaled cascade} \label{S:prop_gene}
For each $m$ we denote as $(W^{(m)}_r)_{r \in \cup_{j} \{0,1\}^j}$
the set of i.i.d. random variables used for the construction of the
measure $\mu^{(m)}_\infty$. Moreover we assume that for each $m \ge
0$, $j\ge 0$, $r \in \{0,1 \}^j$ we are given a random variable
$Z^{(m,r)}$, measurable with respect to the sigma-field
$\sigma\left( W^{(m)}_{rr'} \mid r' \in \cup_{j} \{0,1\}^j \right)$.
We make the assumption that the law of $Z^{(m,r)}$ does not depend
on $(m,r)$, and denote by $Z$ a variable with this law.

Let us consider the quantities, for $p \in \R$:
\begin{equation} \label{E:def_Mj}
\mathcal{M}^{(m)}_j(p)= 2^{-jp }\sum_{r \in \{0,1 \}^j}
\prod_{i=1}^j \left( W^{(m)}_{r \mid i} \right)^p Z^{(m,r)},
\end{equation}
and
\begin{equation} \label{E:def_Nj}
\mathcal{N}_j(p)=\sum_{m=0}^{N_T-1} \mathcal{M}^{(m)}_j(p).
\end{equation}
\begin{prop} \label{P:cascade_conser_mixte}
Assume that for some $\epsilon >0$,
$\EE{\abs{Z}^{1+\epsilon}}<\infty$ and  $-p \tau'(p)+\tau(p) <
\chi$, then:
\begin{equation*}
2^{j (\tau(p)-\chi)} \mathcal{N}_j(p) \xrightarrow{j \to \infty}
\EE{Z}, \text{ almost surely.}
\end{equation*}
\end{prop}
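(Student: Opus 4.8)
The plan is to read the statement as a strong law of large numbers for the array $\{\mathcal{M}^{(m)}_j(p)\}_{0\le m<N_T}$, which at each fixed $j$ is a family of i.i.d.\ summands since distinct cascades are built from independent weights and decorations. \emph{First} I would compute the mean. Because $Z^{(m,r)}$ is a functional of the weights strictly below $I_{j,\overline r}$, it is independent of the path weights $W^{(m)}_{r\mid 1},\dots,W^{(m)}_{r\mid j}$; hence $\EE{\prod_{i=1}^j (W^{(m)}_{r\mid i})^p\,Z^{(m,r)}}=\EE{W^p}^j\EE{Z}$, and summing the $2^j$ identical contributions with $\EE{W^p}=2^{\,p-\tau(p)-1}$ gives $\EE{\mathcal{M}^{(m)}_j(p)}=2^{-j\tau(p)}\EE{Z}$. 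Consequently $\EE{2^{j(\tau(p)-\chi)}\mathcal{N}_j(p)}=N_T\,2^{-j\chi}\EE{Z}\to\EE{Z}$, so it suffices to show that $X_j:=2^{j(\tau(p)-\chi)}\mathcal{N}_j(p)$ converges almost surely to its mean.

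\emph{Second}, for the almost sure convergence I would combine a moment bound with Borel--Cantelli. Fix $s=1+\epsilon'$ with $\epsilon'\in(0,\min(1,\epsilon))$ to be adjusted at the end. Applying the von Bahr--Esseen inequality to the centered i.i.d.\ sum over $m$ and using $N_T\le 2^{j\chi}$,
\[
\EE{\abs{X_j-\EE{X_j}}^s}\le 2\cdot 2^{js(\tau(p)-\chi)}N_T\,\EE{\abs{\mathcal{M}^{(m)}_j(p)-\EE{\mathcal{M}^{(m)}_j(p)}}^s}\le C\,2^{\,js\tau(p)+j\chi(1-s)}\,\EE{\abs{\mathcal{M}^{(m)}_j(p)-\EE{\mathcal{M}^{(m)}_j(p)}}^s}.
\]
If the right-hand side is summable in $j$, then $\PP(\abs{X_j-\EE{X_j}}>\eta)$ is summable for every $\eta>0$ by Markov's inequality, so $X_j-\EE{X_j}\to 0$ almost surely (applying this along a sequence $\eta\downarrow 0$), and since $\EE{X_j}\to\EE{Z}$ this gives the claim.

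\emph{Third} comes the single-cascade moment estimate, which I expect to be the main obstacle. Conditioning on all the weights, so that the decorations $Z_r:=Z^{(m,r)}$ become i.i.d.\ with mean $\EE{Z}$ and independent of the weights, I would split
\[
\mathcal{M}^{(m)}_j(p)-\EE{\mathcal{M}^{(m)}_j(p)}=\EE{Z}\bigl(\mathcal{Q}_j-\EE{\mathcal{Q}_j}\bigr)+2^{-jp}\sum_{r\in\{0,1\}^j}\Bigl(\prod_{i=1}^j (W^{(m)}_{r\mid i})^p\Bigr)\bigl(Z_r-\EE{Z}\bigr),
\]
where $\mathcal{Q}_j:=2^{-jp}\sum_{r}\prod_{i}(W^{(m)}_{r\mid i})^p\ge 0$ is the undecorated partition sum. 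The fluctuation term is controlled by the von Bahr--Esseen inequality applied conditionally over the $2^j$ leaves, giving an $s$-th moment at most $C\,2^{-jps}\sum_r\EE{\prod_i W_{r\mid i}^{ps}}=C\,2^{-j\tau(ps)}$. For the nonnegative sum $\mathcal{Q}_j$ I would use the cascade recursion $\mathcal{Q}_j\stackrel{d}{=}2^{-p}(W_0^p\,\mathcal{Q}^{(0)}_{j-1}+W_1^p\,\mathcal{Q}^{(1)}_{j-1})$: writing $u_j:=\EE{\mathcal{Q}_j^s}$ and invoking the elementary inequality $(a+b)^s\le a^s+b^s+c_s(a^{s-1}b+ab^{s-1})$ valid for $1\le s\le 2$, together with $\EE{\mathcal{Q}_j}=2^{-j\tau(p)}$ and Jensen's inequality for the cross terms, yields $u_j\le 2^{-\tau(ps)}u_{j-1}+C\,2^{-js\tau(p)}$, hence $\EE{\mathcal{Q}_j^s}\le C\max\bigl(2^{-j\tau(ps)},2^{-js\tau(p)}\bigr)$. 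Collecting the two pieces, $\EE{\abs{\mathcal{M}^{(m)}_j(p)-\EE{\mathcal{M}^{(m)}_j(p)}}^s}\le C\max\bigl(2^{-j\tau(ps)},2^{-js\tau(p)}\bigr)$.

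\emph{Finally} I would substitute this into the Borel--Cantelli series. The two resulting exponents are $s\tau(p)+\chi(1-s)-\tau(ps)$ and $\chi(1-s)$; the latter is negative because $s>1$ and $\chi>0$. Setting $\phi(s):=s\tau(p)-\tau(ps)$, the former equals $\phi(s)-\chi(s-1)$, and since $\phi(1)=0$ with $\phi'(1)=\tau(p)-p\tau'(p)$, the hypothesis $-p\tau'(p)+\tau(p)<\chi$ forces this exponent to be strictly negative for every $s>1$ sufficiently close to $1$. Choosing $s\in(1,\min(2,1+\epsilon))$ accordingly makes both exponents negative, the series geometrically summable, and the argument complete. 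The genuinely delicate regime is $p\tau'(p)-\tau(p)\le 0$, i.e.\ $p\in(p_\chi^-,p_\chi^+)\setminus(p_0^-,p_0^+)$: there the reweighted cascade with generator $W^p/\EE{W^p}$ is degenerate and $\mathcal{Q}_j\to 0$ almost surely, and it is exactly the $2^{-js\tau(p)}$ branch of the moment bound, combined with the averaging over the $N_T\sim 2^{j\chi}$ independent cascades, that restores convergence to $\EE{Z}$.
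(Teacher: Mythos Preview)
Your proof is correct and follows the same overall architecture as the paper: compute $\EE{\mathcal{N}_j(p)}$, reduce to showing that the centered, rescaled quantity tends to $0$ almost surely, apply von Bahr--Esseen to the i.i.d.\ sum over $m$, bound the single-cascade $(1+\epsilon)$-moment, and conclude by Markov's inequality and Borel--Cantelli. The final analysis of the exponent via $\phi(s)=s\tau(p)-\tau(ps)$, $\phi'(1)=\tau(p)-p\tau'(p)$, is exactly the paper's.

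The genuine difference is in the single-cascade moment bound. The paper's Lemma~2 writes $X^2=Y+D$ (off-diagonal and diagonal terms, indexed by the level of the deepest common ancestor), applies subadditivity of $x\mapsto x^{(1+\epsilon)/2}$ and Jensen, and obtains
\[
\EE{\abs{\mathcal{M}_j(p)}^{1+\epsilon}}\le C\,2^{-j(1+\epsilon)\tau(p)}\sum_{k=0}^{j}2^{k[(1+\epsilon)\tau(p)-\tau(p(1+\epsilon))]} .
\]
Summing this geometric series gives precisely your $C\max(2^{-j\tau(ps)},2^{-js\tau(p)})$ (with a harmless factor $j$ on the boundary). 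You instead first isolate the $Z$-fluctuations by a conditional von Bahr--Esseen over the $2^j$ leaves, and then control the undecorated sum $\mathcal{Q}_j$ through the one-step self-similarity recursion $u_j\le 2^{-\tau(ps)}u_{j-1}+C\,2^{-js\tau(p)}$. Both routes yield the same bound; yours is arguably more elementary and makes the role of the two regimes ($p$ inside or outside $(p_0^-,p_0^+)$) transparent, while the paper's common-ancestor decomposition has the advantage of being reusable verbatim for the wavelet partition functions later on (their Lemma~4), where the simple two-branch recursion for $\mathcal{Q}_j$ is no longer directly available.
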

\begin{proof}
From \eqref{E:def_Mj}--\eqref{E:def_Nj} and the definition
\eqref{eq:taup} we get,
\begin{align*}
\EE{\mathcal{N}_j(p)} & =  N_T 2^{j} 2^{-jp} \EE{W^p}^j \EE{Z} \\
& \displaystyle \sim_{j \to \infty} 2^{j\chi} 2^{-j\tau(p)} \EE{Z}.
\end{align*}
Hence the proposition will be proved if we show:
\begin{equation} \label{E:notre_but_cv}
2^{j (\tau(p)-\chi)} \left( \mathcal{N}_j(p)- \EE{\mathcal{N}_j(p)}
\right) \xrightarrow{j \to \infty} 0, \text{ almost surely.}
\end{equation}
For an arbitrary small $\epsilon >0$, we study the
$L^{1+\epsilon}(\PP)$ norm of the difference. Set,
\begin{equation}
\label{eq:step1} L_{N}^{1+\epsilon} = \EE{|\mathcal{N}_j(p) -
\EE{\mathcal{N}_j(p)}|^{1+\epsilon}}.
\end{equation}
Applying successively lemmas \ref{lem:1} and \ref{lem:2} of Appendix
\ref{app:lem12}, we get:
\begin{align*}
L_{N}^{1+\epsilon} &\le C 2^{j \chi } \EE{ \abs{
\mathcal{M}^{(0)}_j(p) }^{1+\epsilon} } \\
& \le C 2^{-j[(1+\epsilon)\tau(p)-\chi]} \sum_{k=0}^{j}
2^{-k\tau(p(1+\epsilon))} 2^{k(1+\epsilon)\tau(p)}.
\end{align*}
We deduce that $ 2^{j (\tau(p)-\chi)(1+\epsilon)}L_{N}^{1+\epsilon}$
is bounded by the quantity:
\begin{equation*}
C2^{-j\chi\epsilon} \sum_{k=0}^{j} 2^{-k\tau(p(1+\epsilon))}
2^{k(1+\epsilon)\tau(p)}.
\end{equation*}
Clearly, as soon as  $ 2^{-\chi\epsilon}2^{-\tau(p(1+\epsilon))}
2^{(1+\epsilon)\tau(p)} < 1$, this quantity is, in turn, bounded by
$C2^{-j\chi\epsilon'}$ for some $\epsilon'>0$. Taking the log, a
sufficient condition is
\begin{equation*} \frac {\tau(p)(1+\epsilon)
-\tau(p(1+\epsilon))}{\epsilon} < \chi
\end{equation*}
which is implied for $\epsilon$ small enough by
\begin{equation*} -p\tau'(p)+\tau(p)
< \chi.
\end{equation*}
Thus we have shown that $2^{j
(\tau(p)-\chi)(1+\epsilon)}L_{N}^{1+\epsilon}$ is asymptotically
smaller than $2^{-j \epsilon'}$ with some $\epsilon'>0$.
Using the Bienaym\'e-Chebyshev inequality leads to
\begin{equation*}
\PP\{ 2^{j (\tau(p)-\chi)} |  \mathcal{N}_j(p)-
\EE{\mathcal{N}_j(p)}| \geq
\eta \} \le\frac{2^{j (\tau(p)-\chi)(1+\epsilon)}L_{N}^{1+\epsilon}}{\eta^{1+\epsilon}} \le \frac {C 2^{-j \epsilon'}}{\eta^{1+\epsilon}}
\end{equation*}
 for any $\eta >0$.
 A simple use of the Borel Cantelli lemma
shows \eqref{E:notre_but_cv}.
\end{proof}
\subsubsection{Proof of Theorem \ref{th:MSscaling} for $p\in (p_{\chi}^-,p_{\chi}^+)$}
\label{sec:proofScaling}
From \eqref{eq:partitionmix1}--\eqref{eq:Sm} and the representation
\eqref{eq:Srep} for the partition function of a single cascade, we
see that that $\mS(j,p)$ exactly has the same structure as the
quantity $\mathcal{N}_j(p)$ of section \ref{S:prop_gene} where
$Z^{(m,r)}= \overline{\mu}_{\infty}^{(m,r)}([0,T])^{p}$ are random
variables distributed as $Z=\mu_{\infty}([0,T])^{p}$.

By definition (recall
\eqref{E:defPchiplus}--\eqref{E:defPchimoins}), the condition $-p
\tau'(p)+\tau(p) < \chi$ holds for any $p \in (p_\chi^-,p_\chi^+)$,
and by Remark \ref{R:moment}, $\EE{|Z|^{1+\epsilon}}<\infty$ for
$\epsilon$ small enough.

Thus, an application of Proposition \ref{P:cascade_conser_mixte}
yields the almost sure convergence:
\begin{equation} \label{E:cv_partition_renor}
2^{j\tau_\chi(p)} \mS(j,p)= 2^{j(\tau(p)-\chi)} \mS(j,p)
\xrightarrow{j \to \infty} \EE{ \mu_{\infty}([0,T])^p }.
\end{equation}
This proves the theorem \ref{th:MSscaling} for the case $p \in
(p_\chi^-,p_\chi^+)$.
\subsubsection{Proof of Theorem \ref{th:MSscaling} for $p\notin (p_{\chi}^-,p_{\chi}^+)$}
\label{sec:proofScaling2} The following proof is an adaptation of
the corresponding proof in \cite{ResSamGilWil03}.
 We need the following notations:
\begin{align*}
\mS^*(j) &= \sup_{k\in[0,N-1]} \mus_{\infty}(
[k2^{-j}T,(k+1)2^{-j}T]),
\\
m_{sup}(p) &= \limsup_{j\rightarrow \infty} \frac{\log_2
\mS(j,p)}{-j}, \quad m_{inf}(p) = \liminf_{j\rightarrow \infty}
\frac{\log_2 \mS(j,p)}{-j},
\\
 m_{sup}^* &= \limsup_{j\rightarrow \infty}
\frac{\log_2 \mS(j)^*}{-j}, \quad m_{inf}^* = \liminf_{j\rightarrow
\infty} \frac{\log_2 \mS(j)^*}{-j}.
\end{align*}


In Section \ref{sec:proofScaling} we proved that for all $p\in
(p_{\chi}^-,p_{\chi}^+)$ the following holds almost surely:
\begin{equation*}
 m_{sup}(p) = m_{inf}(p) = \tau_{\chi}(p).
\end{equation*}
We may assume that on a event of probability one, this equality
holds for all $p$ in a countable and dense subset of
$(p_{\chi}^-,p_{\chi}^+)$.

From the sub-additivity of $x \mapsto x^\rho$,
\begin{equation*} \forall \rho \in ]0,1[,~\forall p\in \R~~\mS(p,j)^\rho \le
\mS(\rho p,j), \end{equation*}
 and thus
 \begin{equation*} m_{inf}(p)
\ge \frac {m_{inf}(\rho p)} \rho.
\end{equation*}
But we have seen that $m_{inf}(\rho p) = \tau_\chi(\rho p)$, for a
dense subset of $\rho p \in (p_\chi^-,p_\chi^+)$. Assume now for
simplicity that $p\ge p_\chi^+$ and let $\rho  \rightarrow
(p_\chi^+/p)$, we get
\begin{equation} \label{eq:interm7} \forall p\ge p_\chi^+,~~\frac
{m_{inf}(p)} p \ge \frac {\tau_\chi(p_\chi^+)}{p_\chi^+} =
\frac{\tau(p_\chi^+)-\chi}{p_\chi^+} = h_\chi^+,
\end{equation} where
we have used \eqref{E:defPchiplus}

On the other hand, let $p>0$, $q \in [0,p_\chi^+)$, and $q'  \in
[0,q)$, we have
\begin{eqnarray*}
\mS(j,q)  &=&\sum_{k=0}^{N-1} \mus_{\infty}([k2^{-j}T,(k+1)2^{-j}T])^q \\
& \le & \mS^*(j)^{q-q'}\mS(j,q')  \\
& \le & \mS(j,p)^{\frac{q-q'}{p}}\mS(j,q').
\end{eqnarray*}
 Thus
\begin{equation*} m_{sup}(q)  \ge (q-q') \frac {m_{sup}(p)} p + m_{inf}(q'), \end{equation*}
then \begin{equation*} \frac {m_{sup}(p)} p \le \frac
{m_{sup}(q)-m_{inf}(q')}{q-q'} =  \frac
{\tau_{\chi}(q)-\tau_{\chi}(q')}{q-q'}. \end{equation*}
 Taking the
limit $q'\rightarrow q^-$
\begin{equation*} \frac {m_{sup}(p)} p \le
\inf_{q\in[0,p_\chi^+)} \tau'_{\chi}(q) \le \tau'_{\chi}(p_{\chi}) =
h_\chi^+. \end{equation*}
 Merging this last relation with
\eqref{eq:interm7} leads to
\begin{equation} \label{eq:interm8}
\forall p \ge p_\chi^+,~~h_\chi^+\le \frac {m_{inf}(p)} p \le \frac
{m_{sup}(p)} p \le h_\chi^+,
\end{equation}
 which proves Theorem
\ref{th:MSscaling} for $p\in [p_{\chi}^+,+\infty[$. The proof for $p
\le p_\chi^-$ is similar.

\subsubsection{Proof of Theorem \ref{th:MSup}}
\label{sec:proofSup} The following proof is an adaptation of the
corresponding proof in \cite{ResSamGilWil03}.  We have for $p>0$,
\begin{equation*} \mS^*(j)^p \le \mS(j,p) \le N \mS*(j)^p =
\lfloor 2^{j\chi} \rfloor 2^{j} \mS^*(j)^p,
\end{equation*}
 thus
\begin{equation*} p m_{inf,sup}^* \ge m_{inf,sup}(p) \ge 1+\chi + p
m_{inf,sup}^*,
\end{equation*}
which means that \begin{equation*}
 \frac {m_{inf,sup}(p)} p - \frac{1+\chi} p \ge  m_{inf,sup}^* \ge \frac {m_{inf,sup}(p)} p,
\end{equation*}
and taking the limit $p\rightarrow +\infty$ and using
\eqref{eq:interm8} proves that
\begin{equation*} m_{sup}^* = m_{inf}^* =
h_\chi^+, \end{equation*}
 which proves \eqref{eq:Msupscale}. The proof of
 \eqref{eq:Minfscale} is obtained analogously by considering $p<0$.



\subsection{Multifractal formalism and ``negative dimensions''}
\label{S:Multifractal_formalism}
Let $D(h)$ be the Legendre transform of $\tau(p)$ :
\begin{equation*}
D(h) = \min_{p}(ph-\tau(p)),
\end{equation*}
The {\em multifractal formalism} \cite{FriPar85} gives an
interesting interpretation of $D(h)$, as soon as $D(h)>0$, in terms
of dimension of set of points with the same regularity. For ${\cal
M}$-cascades, this formalism holds \cite{Mol96}, i.e., $D(h)$
corresponds to the Hausdorff dimension of the points $t \in [0,T]$
around which $\mu_{\infty}$ scales with the exponent $h$ :
\begin{equation}
\label{eq:D0hDim} D(h) = dim_{H}\left\{t,~
\limsup_{\epsilon\rightarrow 0} \frac
{\log_{2}\mu_{\infty}([t-\epsilon,t+\epsilon])}{\log_{2}(\epsilon)}
= h\right\}.
\end{equation}
The r.h.s. of \eqref{eq:D0hDim} is usually referred to as the {\em
singularity spectrum} and therefore the multifractal formalism
simply states that $D(h)$ can be identified with the singularity
spectrum of the cascade.

In a mixed asymptotic framework,
our next result shows that some kind of multifractal formalism still
holds for $D(h)<0$ in the sense that $D(h)$ governs the behavior of
the population histogram {\em per sample} of measure values at scale
$2^{-j}$ as estimated over $2^{j\chi}$ cascade samples.
In other words, $D(h)$ coincides with a box-counting dimension
 (sometimes referred to as a box dimension \cite{Jaf04}
or a coarse-grain spectrum \cite{Rie99}).
Hence the Legendre transform of $\tau(p)$ can be interpreted as a
''population'' dimension even for singularity values above and below
$h_0^+$ and $h_0^-$. Since for these values, one has $D(h) < 0$ they
have been called  ''negative dimensions'' by Mandelbrot
\cite{Man90a}. This simply means that they cannot be observed on a
single cascade sample but one needs at least $2^{j\chi}$
realizations to observe them with a ''cardinality'' like
$2^{j(\chi+D(h))}$. In that respect, they have also been referred to
as ''latent'' singularities  \cite{Man03}.

\begin{theo} \label{T:formalism_D_neg}
Assume $p_\chi^+<\infty$, $p_\chi^->-\infty$ and $\tau(p_\chi^+)>0$.
Let $h \in (h_\chi^+,h_\chi^-)$, then:
\begin{multline}
\label{E:res_inf_card} \lim_{\varepsilon \to 0}
\mathop{\underline{\lim}}_j \frac{1}{j} \log \# \left\{ k \in
\{0,\dots,N-1\} \mid 2^{-j(h+\varepsilon)} \le \tilde{\mu}(I_{j,k})
\le 2^{-j(h-\varepsilon)}\right\} \\ = \chi + D(h),
\end{multline}
\begin{multline}
\label{E:res_sup_card} \lim_{\varepsilon \to 0} \mathop{
\overline{\lim} }_{j} \frac{1}{j} \log \# \left\{ k \in
\{0,\dots,N-1\} \mid 2^{-j(h+\varepsilon)} \le \tilde{\mu}(I_{j,k})
\le 2^{-j(h-\varepsilon)}\right\} \\ = \chi + D(h).
\end{multline}
\end{theo}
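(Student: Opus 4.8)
The plan is to prove matching bounds for the normalized logarithm of the box count
\[
N_j(h,\varepsilon) := \#\{\, k\in\{0,\dots,N-1\} : 2^{-j(h+\varepsilon)}\le \tilde{\mu}(I_{j,k})\le 2^{-j(h-\varepsilon)}\,\},
\]
reading $\log$ as $\log_2$ so that the target $\chi+D(h)$ is the exponent in $N_j(h,\varepsilon)\approx 2^{j(\chi+D(h))}$. Since $\tau$ is strictly concave (assumption $\PP(W=1)<1$), $\tau'$ is a continuous strictly decreasing bijection of $(p_\chi^-,p_\chi^+)$ onto $(h_\chi^+,h_\chi^-)$; hence for $h$ in this interval there is a unique $p^\ast\in(p_\chi^-,p_\chi^+)$ with $\tau'(p^\ast)=h$, and $D(h)=p^\ast h-\tau(p^\ast)$, so that $\chi+D(h)=p^\ast h-\tau_\chi(p^\ast)$. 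I would fix once and for all a countable dense set of exponents containing all the $p^\ast,q,q'$ used below and work on the single probability-one event on which the a.s.\ scaling $\tfrac1j\log_2\mS(j,p)\to-\tau_\chi(p)$ of Theorem \ref{th:MSscaling} holds for every exponent in this set; the whole argument is then almost sure.

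The upper bound is a Markov-type estimate through the partition function. For $p\in(0,p_\chi^+)$ every box in the defining set of $N_j(h,\varepsilon)$ satisfies $\tilde{\mu}(I_{j,k})^p\ge 2^{-jp(h+\varepsilon)}$, whence $N_j(h,\varepsilon)\le 2^{jp(h+\varepsilon)}\mS(j,p)$; for $p\in(p_\chi^-,0)$ one uses instead $\tilde{\mu}(I_{j,k})^p\ge 2^{-jp(h-\varepsilon)}$ to get $N_j(h,\varepsilon)\le 2^{jp(h-\varepsilon)}\mS(j,p)$. Taking $\tfrac1j\log_2$, invoking Theorem \ref{th:MSscaling}, and letting $\varepsilon\to0$ (the count is monotone in $\varepsilon$) gives $\lim_{\varepsilon\to0}\limsup_j \tfrac1j\log_2 N_j(h,\varepsilon)\le ph-\tau_\chi(p)$ for every $p\in(p_\chi^-,p_\chi^+)$. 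Optimizing over $p$ and noting that the unconstrained Legendre minimizer $p^\ast$ is interior to $(p_\chi^-,p_\chi^+)$, the constrained and unconstrained infima coincide, so the upper bound equals $\chi+D(h)$.

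The lower bound is the substantial part. I would show that $\mS(j,p^\ast)$ is carried, up to subexponential factors, by the good boxes. Write $\mS(j,p^\ast)=\Sigma_{\mathrm{good}}+\Sigma_<+\Sigma_>$, where $\Sigma_<$ collects the boxes with $\tilde{\mu}(I_{j,k})>2^{-j(h-\varepsilon)}$ and $\Sigma_>$ those with $\tilde{\mu}(I_{j,k})<2^{-j(h+\varepsilon)}$. For $q\in(p^\ast,p_\chi^+)$ the comparison $\tilde{\mu}^{p^\ast}=\tilde{\mu}^{p^\ast-q}\tilde{\mu}^{q}\le 2^{j(h-\varepsilon)(q-p^\ast)}\tilde{\mu}^{q}$, valid on the event defining $\Sigma_<$, yields $\Sigma_<\le 2^{j(h-\varepsilon)(q-p^\ast)}\mS(j,q)$, and symmetrically $\Sigma_>\le 2^{-j(h+\varepsilon)(p^\ast-q')}\mS(j,q')$ for $q'\in(p_\chi^-,p^\ast)$. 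By strict concavity the difference quotients of $\tau$ straddle $\tau'(p^\ast)=h$, so choosing $q\downarrow p^\ast$ with $\tfrac{\tau(q)-\tau(p^\ast)}{q-p^\ast}>h-\varepsilon$ and $q'\uparrow p^\ast$ with $\tfrac{\tau(p^\ast)-\tau(q')}{p^\ast-q'}<h+\varepsilon$ makes both $\Sigma_<$ and $\Sigma_>$ exponentially negligible relative to $\mS(j,p^\ast)\sim 2^{-j\tau_\chi(p^\ast)}$. Hence $\Sigma_{\mathrm{good}}\sim\mS(j,p^\ast)$; bounding each good term by its extreme value ($\tilde{\mu}^{p^\ast}\le 2^{-jp^\ast(h-\varepsilon)}$ if $p^\ast>0$, $\le 2^{-jp^\ast(h+\varepsilon)}$ if $p^\ast<0$) gives $N_j(h,\varepsilon)\ge\Sigma_{\mathrm{good}}\,2^{jp^\ast(h\mp\varepsilon)}$, so $\liminf_j\tfrac1j\log_2 N_j(h,\varepsilon)\ge p^\ast(h\mp\varepsilon)-\tau_\chi(p^\ast)$, and $\varepsilon\to0$ produces the lower bound $\chi+D(h)$.

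Combining the two, $\lim_{\varepsilon\to0}\liminf_j$ and $\lim_{\varepsilon\to0}\limsup_j$ of $\tfrac1j\log_2 N_j(h,\varepsilon)$ both equal $\chi+D(h)$, which is \eqref{E:res_inf_card} and \eqref{E:res_sup_card}. I expect the main obstacle to be the lower bound, precisely the verification that $\Sigma_<$ and $\Sigma_>$ are strictly subdominant: this is where strict concavity of $\tau$ is indispensable and where the auxiliary exponents $q,q'$ must be tuned to $\varepsilon$. The hypotheses $p_\chi^+<\infty$, $p_\chi^->-\infty$ and $\tau(p_\chi^+)>0$ serve to keep $h_\chi^\pm$ finite, the minimizer $p^\ast$ interior to $(p_\chi^-,p_\chi^+)$, and the moments $\EE{\mu_\infty([0,T])^q}$ finite so that Theorem \ref{th:MSscaling} applies at all the exponents involved (cf.\ Remark \ref{R:moment}).
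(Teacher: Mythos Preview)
Your proof is correct and follows a genuinely different route from the paper. The paper splits into two cases: for $h\in[h_0^+,h_0^-]$ (positive dimensions) it invokes the known box-counting formalism for a single $\mathcal M$-cascade \cite{BarSeu07,Rie99} and then aggregates over the $N_T$ independent copies; for $h\in(h_\chi^+,h_0^+)\cup(h_0^-,h_\chi^-)$ (negative dimensions) it writes $h=\tau'(p_{\chi'}^+)$ with $D(h)=-\chi'$, partitions the data into $\sim 2^{j(\chi-\chi')}$ independent blocks of size $\sim 2^{j(1+\chi')}$, and uses the scaling of the supremum (Theorem \ref{th:MSup}) in each block to produce, via Borel--Cantelli, at least a positive fraction of blocks containing a box of the right size.

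Your argument is instead a unified large-deviations ``tilting'' argument: you locate the Legendre minimizer $p^\ast$, show that $\mS(j,p^\ast)$ is asymptotically carried by the good boxes (the tails $\Sigma_<,\Sigma_>$ being killed by comparison with $\mS(j,q),\mS(j,q')$ at nearby exponents, using strict concavity of $\tau$), and then read off the count. This uses only Theorem \ref{th:MSscaling} and avoids both the case split and the external single-cascade formalism; it is shorter and conceptually cleaner. The paper's blocking argument, on the other hand, gives a more explicit probabilistic picture of \emph{where} the rare boxes come from (one per block of size $2^{j(1+\chi')}$), which is in the spirit of Mandelbrot's ``supersampling'' interpretation of negative dimensions; your approach hides this mechanism but gets to the result faster. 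Both upper bounds are essentially the same Markov/partition-function estimate.
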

\begin{proof} {\bf $1^{\text{st}}$ step:} We focus on the cases that
yield to negative dimensions, i.e. $h \in (h_\chi^+,h_0^+) \cup
(h_{0}^-,h_{\chi}^+)$. For simplicity assume $h \in
(h_{\chi}^+,h_0^+)= (\tau'(p_\chi^+), \tau'(p_0^+))$. We can write
$h=\tau'(p)$ for some $p \in (p_0^+,p_\chi^+)$ and if we define
$$\chi'=\tau(p)-p \tau'(p) \in (0,\chi)
$$
we easily get that $h=\tau'(p_{\chi'}^+)$ and $D(h)=-\chi'$. Thus
the theorem amounts to assess the magnitude of
$$
\# \left\{ k \in \{0,\dots,N-1\} \mid 2^{-j(\tau'(p_{\chi'}^+)
+\varepsilon)} \le \tilde{\mu}(I_{j,k})  \le
2^{-j(\tau'(p_{\chi'}^+) -\varepsilon)}\right\}
$$
as $\simeq 2^{j(\chi-\chi')}$.

First we derive a lower bound for this cardinality. The idea is to
split the data into blocks of size $2^{j} \left\lfloor 2^{j\chi'}
\right\rfloor$ and rely on the behavior of the supremum of
$\tilde{\mu} (I_{j,k})$ under mixed asymptotic with index $\chi'$.
More precisely let $N'=2^{j} \left\lfloor 2^{j\chi'} \right\rfloor$
and define the blocks
\begin{equation*}
\block_a =\{a N',\dots,(a+1)N'-1 \},\quad \text{ for $a=0,\dots,
M-1:= \left\lfloor N/N' \right\rfloor -1$}.
\end{equation*}
Fix $a \in \{0,\dots,M-1 \}$, then for any $p_1<p_2<p_{\chi'}^+$ we
have
\begin{equation}\label{E:mino_sup}
\sup_{k \in \block_a} \tilde{\mu}(I_{j,k}) ^{p_2-p_1} \ge
\frac{\sum_{k \in \block_a } \tilde{\mu}(I_{j,k})^{p_2} }{\sum_{k
\in \block_a } \tilde{\mu}(I_{j,k})^{p_1}} :=
2^{j(\tau_{\chi'}(p_1)-\tau_{\chi'}(p_2))}Q_j^{(a)}(p_1,p_2),
\end{equation}
where
$$Q_j^{(a)}(p_1,p_2)=
\frac{\sum_{k \in \block_a } \tilde{\mu}(I_{j,k})^{p_2}
2^{j\tau_{\chi'}(p_2)}}{\sum_{k \in \block_a }
\tilde{\mu}(I_{j,k})^{p_1} 2^{j\tau_{\chi'}(p_1)}}.$$ Clearly the
law of $Q_j^{(a)}(p_1,p_2)$ does not depend on $a$, and
$Q_j^{(0)}(p_1,p_2)$ is the ratio of two rescaled partition
functions in mixed asymptotic with index $\chi'$. Hence by
\eqref{E:cv_partition_renor}, $Q_j^{(0)}(p_1,p_2)$ converges almost
surely to the non zero constant $\EE{\mu_{\infty}([0,T])^{p_2}} /
\EE{\mu_{\infty}([0,T])^{p_1}}$.
 We deduce that for all fixed $a \le M-1$, and $p_1, p_2 \in (0,
p_{\chi'}^+)$, the sequence $(1/Q_j^{(a)}(p_1,p_2))_{j \ge 0}$ is
bounded in probability. Write now, by \eqref{E:mino_sup}, and
$\tau_{\chi'}(p)=\tau(p)-\chi'$ for $p<p_{\chi'}^+$,
$$
\sup_{k \in \block_a}
 \tilde{\mu}(I_{j,k}) 2^{j(\tau'(p_{\chi'}^+) +\epsilon)}\ge Q_j^{(a)}(p_1,p_2) 2^{-j[ \frac{\tau(p_2)-\tau(p_1)}{p_2-p_2}-\tau'(p_{\chi'}^+)-\epsilon  ]}
$$
and choose  $p_1$, $p_2$ fixed but close enough to $p_{\chi'}^+$.
This yields, for all $j\ge0$:
$$
\sup_{k \in \block_a}
 \tilde{\mu}(I_{j,k}) 2^{j(\tau'(p_{\chi'}^+)+\epsilon)}\ge Q_j^{(a)}(p_1,p_2) 2^{j\epsilon /2} .
$$
Then using that $1/Q_j^{(a)}(p_1,p_2)$ is bounded in probability, we
get $\PP ( \sup_{k \in \block_a}
 \tilde{\mu}(I_{j,k}) \ge 2^{-j(\tau'(p_{\chi'}^+) + \varepsilon) } ) \ge
\PP ( 1/Q_j^{(a)} (p_1,p_2) \le  2^{j\epsilon /2} ) \ge 1/2$ for $j$
large enough.

Remark now that the cardinality of the set $\{ k \in \{0,\dots,N-1\}
\mid \tilde{\mu}(I_{j,k})   \ge 2^{-j(\tau'(p_{\chi'}^+)
+\varepsilon)} \} $ is immediately lower bounded by the sum
$$
\sum_{a=0}^{M-1} 1_{ \left\{ \sup_{k \in \block_a}
 \tilde{\mu}(I_{j,k}) \ge 2^{-j(\tau'(p_{\chi'}^+) + \varepsilon)} \right\}}
$$
of i.i.d. Bernoulli variables with parameter greater than $1/2$.
Then it is easily deduced, using the Borel Cantelli lemma and
$M\sim_{j \to \infty} 2^{(\chi-\chi')j}$ that with probability one:
\begin{equation} \label{E:cardinal_inf}
 \mathop{\underline{\lim}}_j  2^{-(\chi-\chi')j} \# \left\{ k \in \{0,\dots,N-1\} \mid \tilde{\mu}(I_{j,k})   \ge
2^{-j(\tau'(p_{\chi'}^+) +\varepsilon)} \right\} \ge 1/4.
\end{equation}

We now focus on upper bounds for the cardinality of the set
\begin{equation*}
\{ k \in \{0,\dots,N-1\} \mid \tilde{\mu}(I_{j,k})   \ge
2^{-j(\tau'(p_{\chi'}^+) +\eta)} \}
\end{equation*}
where $\eta$ is some real number in a neighborhood of zero.
 It is simply derived from the connection with the partition function that for any $p>0$ this cardinality is lower than
$ \mS(j,p) 2^{j (\tau'(p_{\chi'}^+) +\eta)p } $. Applying this with
$p=p_{\chi'}^+$ and since, by \eqref{E:defPchiplus},
$\tau_{\chi}(p_{\chi'}^+)=\tau(p_{\chi'}^+)-\chi=
p\tau'(p_{\chi'}^+)+\chi'-\chi$ we get the following upper bound,
\begin{multline}
\label{E:upper_bound_card} \# \{ k \in \{0,\dots,N-1\} \mid
\tilde{\mu}(I_{j,k})   \ge 2^{-j(\tau'(p_{\chi'}^+) +\eta)} \} \\
\le \mS(j,p_{\chi'}^+) 2^{j \tau_{\chi}(p_{\chi'}^+)}
2^{j(\chi-\chi'+\eta p_{\chi'}^+)}.
\end{multline}
By $p_{\chi'}^{+}<p_{\chi}^+$, the convergence result
\eqref{E:cv_partition_renor} with $p=p_{\chi'}^+$ applies and we
deduce for $\eta=-\epsilon<0$ that:
\begin{equation} \label{E:cardinal_sup}
\mathop{\overline{\lim}}_j 2^{-(\chi-\chi')j} \#  \{ k \in
\{0,\dots,N-1\} \mid  \tilde{\mu}(I_{j,k})   \ge
2^{-j(\tau'(p_{\chi'}^+) -\varepsilon)} \} =0 .
\end{equation}
Then \eqref{E:res_inf_card} is a consequence of
\eqref{E:cardinal_inf} and \eqref{E:cardinal_sup}.

Finally, the upper bound  \eqref{E:res_sup_card} is directly
obtained by applying \eqref{E:upper_bound_card}
with $\eta=\varepsilon$.

{\bf $2^{\text{nd}}$ step:} We now deal with the more classical case
$h \in [h_0^+,h_0^-]$. It is known, from the multifractal formalism
for a single {\cal M}-cascade on $[0,T]$ (see
\cite{BarSeu07,Rie99}), that
 with probability $1$:
\begin{multline}\label{E:formalism_une}
\lim_{\varepsilon \to 0} \mathop{\underline{\lim}}_{j} \frac{1}{j}
\log \# \left\{ k \in \{0,\dots,2^{j}-1\} \mid \tilde{\mu}(I_{j,k})
\in [2^{-j(h+\varepsilon)},2^{-j(h-\varepsilon)}] \right\}  \\=
D(h).
\end{multline}
For $m \in \{0, \dots, N_T-1\}$, and $\varepsilon>0$, $\eta>0$,
denote by $A_j^{(m)}(\eta,\varepsilon)$ the event:
\begin{equation*}
\# \left\{ k \in \{0,\dots,2^{j}-1\} \mid \tilde{\mu}(I_{j,2^jm+k})
\in [2^{-j(h+\varepsilon)},2^{-j(h-\varepsilon)}] \right\} \ge
2^{j(D(h)-\eta)}.
\end{equation*}
Using the independence of the ${\cal M}$-cascades $(\mu^{(m)})_m$,
these events are independent and $\PP(A_j^{(m)}(\eta,\varepsilon))$
does not depend on $m$. Moreover, by \eqref{E:formalism_une}, for
any $\eta>0$, there exists $\epsilon>0$ such that
$\PP(A_j^{(m)}(\eta,\varepsilon)) \xrightarrow{j \to \infty}1$. We
easily deduce that for any $\eta>0$ and $\epsilon$ small enough:
\begin{equation*}
\mathop{\underline{\lim}}_j \frac{1}{N_T} \sum_{m=0}^{N_T-1} 1_{\{
A_j^{(m)}(\eta,\varepsilon) \}} \ge 1/2,~ \text{ almost surely.}
\end{equation*}
Since the cardinality of $\left\{k \in \{0,\dots,N-1 \} \mid
\tilde{\mu}(I_{j,k}) \in
[2^{-j(h+\varepsilon)},2^{-j(h-\varepsilon)}] \right\}$ is greater
than $2^{j(D(h)-\eta)} \sum_{m=0}^{N_T-1} 1_{\{
A_j^{(m)}(\eta,\varepsilon) \}}$ we deduce that the left hand side
of \eqref{E:res_inf_card} is greater than $D(h)+\chi-\eta$, for any
$\eta>0$. To end the proof, it suffices to show that the left hand
side of \eqref{E:res_sup_card} is lower than $D(h)+\chi$.
This is easily done, as in the end of the first step, by relying on
the asymptotic behavior of the partition function.
\end{proof}
\subsection{Central Limit Theorems}
\label{TCL}
In this section, we briefly study the rate of the convergence of
${\mathcal S}_{\tilde \mu}(j,p)$ as $j \rightarrow \infty$ of
\eqref{eq:Mpartitionscaling} in Theorem \ref{th:MSscaling}.
Using the same notations as in the proof of Theorem
\ref{th:MSscaling}, we write:
\begin{equation} \label{E:decompo_variance}
\mS(j,p)=\left\lfloor 2^{j\chi} \right\rfloor 2^{-j\tau(p)} \EE{
\mu_{\infty}([0,T])^p} + A_j+ B_j
\end{equation}
where
\begin{equation*}
A_j=\sum_{m=0}^{N_T-1} 2^{-jp} \sum_{r \in \{0,1\}^j}
\left(\prod_{i=1}^j W_{r|i}^{(m)p}\right)\big(\bar
\mu_\infty^{(m,r)}([0,T])^p-\EE{ \mu_{\infty}([0,T])^p} \big)
\end{equation*}
and
\begin{equation*}
B_j= \sum_{m=0}^{N_T-1} \big(2^{-jp} \sum_{r \in
\{0,1\}^j}\prod_{i=1}^j W_{r|i}^{(m)p}-2^{-j\tau(p)}\big)\EE{
\mu_{\infty}([0,T])^p}.
\end{equation*}
\begin{prop}
Assume \eqref{eq:hypo1}--\eqref{eq:momentsW} and that, either
$p_\chi^+=\infty$, or $p_\chi^+<\infty$ with $\tau(p_\chi^+)>0$. If
$p_\chi^- /2 < p <p_\chi^+/2$ then,
\begin{equation*}
2^{j(\tau(2p)-\chi)/2}A_j \xrightarrow{j \to \infty} {\mathcal
N}\big(0,\text{Var}\left(\mu_{\infty}([0,T])^p\right)\big).
\end{equation*}
\end{prop}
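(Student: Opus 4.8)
The plan is to condition on the coarse scales and recognise $A_j$ as a weighted sum of i.i.d.\ centered variables, to which a Lyapunov central limit theorem applies. Write $\mathcal{F}_j$ for the $\sigma$-field generated by all the weights $W^{(m)}_{r|i}$ with $i\le j$, $m\ge 0$, and set, for $m\in\{0,\dots,N_T-1\}$ and $r\in\{0,1\}^j$,
\begin{equation*}
c_{m,r}=2^{-jp}\prod_{i=1}^j \big(W^{(m)}_{r|i}\big)^p, \qquad \xi_{m,r}=\bar\mu_\infty^{(m,r)}([0,T])^p-\EE{\mu_{\infty}([0,T])^p},
\end{equation*}
so that $A_j=\sum_{m,r} c_{m,r}\,\xi_{m,r}$. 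The weights $c_{m,r}$ are $\mathcal{F}_j$-measurable and positive, while the $\xi_{m,r}$ are built from the weights at levels strictly larger than $j$ of pairwise disjoint subtrees; hence, conditionally on $\mathcal{F}_j$, the family $(\xi_{m,r})$ is i.i.d., independent of $\mathcal{F}_j$, centered, with the law of $\xi=\mu_{\infty}([0,T])^p-\EE{\mu_{\infty}([0,T])^p}$ and variance $\sigma^2=\text{Var}(\mu_{\infty}([0,T])^p)$. Set $a_{m,r}=2^{j(\tau(2p)-\chi)/2}c_{m,r}$, so that $V_j:=\sum_{m,r}a_{m,r}\xi_{m,r}=2^{j(\tau(2p)-\chi)/2}A_j$.

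Next I would control the two conditional moments that drive the CLT, both through Proposition \ref{P:cascade_conser_mixte} applied with $Z\equiv 1$. For the conditional variance, one checks that $\sum_{m,r}c_{m,r}^2=\mathcal{N}_j(2p)$; the hypotheses $p<p_\chi^+/2$ and $p>p_\chi^-/2$, i.e.\ $2p\in(p_\chi^-,p_\chi^+)$, make the Proposition applicable at exponent $2p$ and give $2^{j(\tau(2p)-\chi)}\mathcal{N}_j(2p)\to 1$ almost surely. Consequently
\begin{equation*}
\EE{V_j^2\mid\mathcal{F}_j}=\sigma^2\sum_{m,r}a_{m,r}^2=\sigma^2\,2^{j(\tau(2p)-\chi)}\mathcal{N}_j(2p)\xrightarrow{j\to\infty}\sigma^2,\quad\text{a.s.},
\end{equation*}
which also pins down the normalisation and the limiting variance. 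For the Lyapunov condition I would fix a small $\delta>0$ and compute $\sum_{m,r}c_{m,r}^{2+\delta}=\mathcal{N}_j((2+\delta)p)$; since $2p$ lies strictly inside $(p_\chi^-,p_\chi^+)$, so does $(2+\delta)p$ for $\delta$ small, whence $\mathcal{N}_j((2+\delta)p)\sim 2^{-j(\tau((2+\delta)p)-\chi)}$ a.s.\ and
\begin{equation*}
\sum_{m,r}\abs{a_{m,r}}^{2+\delta}\sim 2^{j g(\delta)},\qquad g(\delta)=\tfrac{2+\delta}{2}(\tau(2p)-\chi)-\big(\tau((2+\delta)p)-\chi\big).
\end{equation*}
One has $g(0)=0$ and $g'(0)=\tfrac12\big((\tau(2p)-2p\tau'(2p))-\chi\big)$, which is strictly negative precisely because $2p\in(p_\chi^-,p_\chi^+)$; thus $g(\delta)<0$ for $\delta$ small and $\sum_{m,r}\abs{a_{m,r}}^{2+\delta}\to 0$ almost surely.

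Finally I would check $\EE{\abs{\xi}^{2+\delta}}<\infty$: for $p>0$ this follows from $(2+\delta)p<p_\chi^+$ and Remark \ref{R:moment} (finiteness of $\EE{\mu_{\infty}([0,T])^q}$ for $q\in[0,p_\chi^+)$), and for $p<0$ from the finiteness of all negative moments under \eqref{eq:momentsW}. The two displays then verify, conditionally on $\mathcal{F}_j$, both the variance convergence and the Lyapunov ratio
\begin{equation*}
\frac{\sum_{m,r}\EE{\abs{a_{m,r}\xi_{m,r}}^{2+\delta}\mid\mathcal{F}_j}}{\big(\sum_{m,r}\EE{(a_{m,r}\xi_{m,r})^2\mid\mathcal{F}_j}\big)^{(2+\delta)/2}}=\frac{\EE{\abs{\xi}^{2+\delta}}\sum_{m,r}\abs{a_{m,r}}^{2+\delta}}{\big(\sigma^2\sum_{m,r}a_{m,r}^2\big)^{(2+\delta)/2}}\to 0,\quad\text{a.s.}
\end{equation*}
By the Lyapunov central limit theorem for the conditionally independent triangular array $(a_{m,r}\xi_{m,r})$, the conditional characteristic function $\EE{e^{itV_j}\mid\mathcal{F}_j}=\prod_{m,r}\phi(t\,a_{m,r})$, with $\phi$ the characteristic function of $\xi$, converges almost surely to $e^{-\sigma^2 t^2/2}$ for every $t\in\R$. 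Since these conditional characteristic functions are bounded by $1$ in modulus, dominated convergence yields $\EE{e^{itV_j}}\to e^{-\sigma^2 t^2/2}$, the announced convergence in distribution. The main obstacle is the passage from the conditional to the unconditional statement, handled cleanly by this bounded-convergence argument on characteristic functions; upstream of it, the delicate point is the strict negativity of $g'(0)$, which is exactly where the hypothesis $p_\chi^-/2<p<p_\chi^+/2$ enters through the strict inequality defining $p_\chi^\pm$.
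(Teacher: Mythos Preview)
Your proof is correct and follows essentially the same path as the paper's. Both arguments hinge on the same two applications of Proposition \ref{P:cascade_conser_mixte} with $Z\equiv 1$: once at exponent $2p$ to get the (conditional) variance converging to $\sigma^2$, and once at exponent $(2+\delta)p$ to force the Lyapunov term to vanish via the sign of $g'(0)=\tfrac12\big(\tau(2p)-2p\tau'(2p)-\chi\big)$. The only cosmetic difference is the packaging of the CLT: the paper orders the leaves into a triangular array of martingale increments with respect to the filtration $\mathcal F_{k,j}$ (your $\mathcal F_j$ is their $\mathcal F_{-1,j}$) and invokes the martingale CLT, whereas you condition on all weights simultaneously, apply the classical Lyapunov CLT to the conditionally i.i.d.\ sum, and then pass from conditional to unconditional convergence via bounded convergence on characteristic functions. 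Both routes are standard and the underlying computations coincide line by line.
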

\begin{proof}
Consistently with the notations of Section \ref{S:prop_gene}, we
define, for every $r \in \{0,1\}^j$ and $m =0,\ldots, N_T-1$, the
random variables $\tilde{Z}^{(m,r)}=\bar
\mu_\infty^{(m,r)}([0,T])^p-\EE{ \mu_{\infty}([0,T])^p}$ and denote
by $ \tilde{Z}=\mu_{\infty}([0,T])^p-\EE{\mu_{\infty}([0,T])^p}$
their common law.
Furthermore, we will need the quantity
\begin{equation} \label{eta}
\eta_{m,r,j}(p)=2^{j (\tau(2p)-\chi)/2}2^{-jp}\left(\prod_{i=1}^j
W_{r|i}^{(m)p}\right) \tilde{Z}^{(m,r)},
\end{equation}
and the following family of $\sigma$-fields:
for $j \geq 0$
\begin{equation*}
{\mathcal F}_{-1,j}:=\sigma\big(W_r^{(m)},\;|r|\leq j,\;m=0,\ldots,
N_T-1\big)
\end{equation*}
and for every $k=0,\ldots, n(j)=2^j(N_T-1)$
\begin{equation*}
{\mathcal F}_{k,j}={\mathcal F}_{-1,j} \vee
\sigma\big(\tilde{Z}^{(m,r)},\;\bar r+2^jm \leq k\big).
\end{equation*}
For fixed $j$, we have a one-to-one correspondence between $(m,r)$
and $k=\bar r+2^jm$, so abusing notation slightly, we write
$\eta_{k,j}(p)$ instead of $\eta_{m,r,j}(p)$ in \eqref{eta} when no
confusion is possible. With these notations,
\begin{equation*}
2^{(\tau(2p)-\chi)/2} A_j=\sum_{k=0}^{n(j)} \eta_{k,j}(p)
\end{equation*}
where $\eta_{k,j}(p)$ is $\mathcal{F}_{k,j}$-measurable and
\begin{equation*} 
\EE{\eta_{k,j}(p)\,|\,{\mathcal F}_{k-1,j}}=0,\;\; \forall
k=0,\ldots, n(j).
\end{equation*}
Thus, we are dealing with a triangular array of martingale
increments. Let us consider the sum of the conditional variances:
\begin{equation}
V_j=\sum_{k=0}^{n(j)}\EE{\eta_{k,j}(p)^2\,|\,{\mathcal F}_{k-1,j}}.
\end{equation}
We have
\begin{equation*}
V_j =\text{Var}(\tilde{Z}) 2^{j(\tau(2p)-\chi)}
2^{-2jp}\sum_{m=0}^{N_T-1}\sum_{r \in \{0,1\}^j}\prod_{i=1}^j
\left(W_{r|i}^{(m)}\right)^{2p},
\end{equation*}
thus by application of Proposition \ref{P:cascade_conser_mixte}
(with the choice of $Z^{(m,r)}$ equal to $1$)
we get,
$$
V_j \xrightarrow{j \to \infty} \text{Var}(\tilde{Z}).
$$
Hence the proposition will be proved, if we can show that the
triangular array satisfies a Lindeberg condition: for some
$\epsilon>0$,
\begin{equation*}
V_j^{(\epsilon)}=\sum_{k=0}^{n(j)}\EE{\abs{\eta_{k,j}(p)}^{2+\epsilon}\,|\,{\mathcal
F}_{k-1,j}} \xrightarrow{j \to \infty} 0.
\end{equation*}
But, we have
\begin{equation*}
V_j^{(\epsilon)}=\EE{\abs{W}^{2+\epsilon}}
2^{j(\tau(2p)-\chi)(1+\epsilon/2)} 2^{-j(2+\epsilon)p} \sum_{r\in\{
0,1\}^j} \prod_{i=1}^j \left( W_{r|i}^{(m)} \right)^{2+\epsilon}
\end{equation*}
and by application of the  Proposition \ref{P:cascade_conser_mixte},
the order of magnitude of $V_j^{(\epsilon)}$ is
$2^{j \big( (\tau(2p)-\chi)(1+\epsilon/2)-(\tau((2+p)\epsilon) -
\chi)\big)}$. Thus, it can be seen that $V_j^{(\epsilon)}$ converges
to zero, for $\epsilon$ small enough, by the condition $ 2p
\tau'(2p)-\tau(2p)> -\chi$. This ends the proof of the proposition.
\end{proof}
\begin{prop} \label{T:TCL_B}
Assume \eqref{eq:hypo1}--\eqref{eq:momentsW} and that, either
$p_\chi^+=\infty$, or $p_\chi^+<\infty$ with $\tau(p_\chi^+)>0$.
Then:
\begin{enumerate}
\item If $\tau(2p)-2 \tau(p)>0$ we have,
\begin{equation} \label{E:TCL_A_normal}
2^{j(\tau(p)-\chi/2)} B_j \rightarrow {\mathcal N}\big(0, c(p)\big),
\end{equation}
where $c(p)>0$ depends on the law of $W$ and $p$.
\item
If $\tau(2p)-2 \tau(p)=0$, we have $\text{Var}(B_j)=O(j 2^{-j
(\tau(2p)-\chi)})$.
\item
If $\tau(2p)-2 \tau(p)<0$, we have $\text{Var}(B_j)=O(2^{-j
(\tau(2p)-\chi)})$.
\end{enumerate}
\end{prop}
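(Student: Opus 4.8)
Under the hypotheses $c_\mu := \EE{\mu_{\infty}([0,T])^p}$ is a finite positive constant, and if I set
$$
U_j^{(m)} = 2^{-jp}\sum_{r \in \{0,1\}^j}\prod_{i=1}^j \big(W_{r|i}^{(m)}\big)^p,
$$
which is the quantity $\mathcal{M}_j^{(m)}(p)$ of Section \ref{S:prop_gene} taken with $Z\equiv 1$, then $\EE{U_j^{(m)}}=2^{-j\tau(p)}$ and
$$
B_j = c_\mu\sum_{m=0}^{N_T-1}\big(U_j^{(m)}-\EE{U_j^{(m)}}\big)
$$
is $c_\mu$ times a sum of $N_T$ i.i.d. centered variables. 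Since each $U_j^{(m)}$ involves only finitely many weights, all its moments are finite by \eqref{eq:momentsW}. Consequently everything reduces to (i) an exact computation of $\sigma_j^2:=\text{Var}(U_j^{(0)})$, which already settles items 2 and 3 through $\text{Var}(B_j)=c_\mu^2 N_T\sigma_j^2$ and $N_T\sim 2^{j\chi}$, and (ii) a central limit theorem for the triangular array in the regime of item 1.

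\textbf{The exact second moment.} I would expand $\EE{(U_j^{(0)})^2}$ and classify the pairs $(r,r')\in(\{0,1\}^j)^2$ by the length $k$ of their longest common prefix: a pair with $k<j$ contributes $\EE{W^{2p}}^k\EE{W^p}^{2(j-k)}$ and there are $2^{2j-k-1}$ such pairs, while the $2^j$ diagonal pairs contribute $\EE{W^{2p}}^j$. Inserting $\EE{W^p}=2^{p-1-\tau(p)}$ and $\EE{W^{2p}}=2^{2p-1-\tau(2p)}$ collapses the geometric sum and gives, writing $\delta=2\tau(p)-\tau(2p)$,
$$
\sigma_j^2 = \frac{2^{\delta}-\tfrac12}{\,2^{\delta}-1\,}\big(2^{-j\tau(2p)}-2^{-2j\tau(p)}\big),
$$
the degenerate exponent $\delta=0$ replacing the geometric sum by $j$ to give $\sigma_j^2=\tfrac{j}{2}2^{-2j\tau(p)}$. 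The inequality $\EE{W^{2p}}\ge\EE{W^p}^2$ amounts to $2^{\delta}\ge \tfrac12$, so the prefactor has the correct sign and $\sigma_j^2>0$. Reading off the leading term yields $\sigma_j^2\sim C_0\,2^{-j\tau(2p)}$ when $\tau(2p)-2\tau(p)<0$, $\sigma_j^2=\tfrac{j}{2}2^{-2j\tau(p)}$ when it vanishes, and $\sigma_j^2\sim c'\,2^{-2j\tau(p)}$ with $c'>0$ when it is positive; items 2 and 3 are then immediate, and in the case of item 1 one finds $\text{Var}\big(2^{j(\tau(p)-\chi/2)}B_j\big)\to c_\mu^2 c'=:c(p)>0$.

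\textbf{The central limit theorem.} In the regime $\tau(2p)-2\tau(p)>0$ the plan is to apply the Lindeberg--Feller theorem to the row-wise i.i.d. array $X_{m,j}=(N_T\sigma_j^2)^{-1/2}(U_j^{(m)}-\EE{U_j^{(m)}})$, whose row sums have unit variance and, after the deterministic rescaling of item 1, coincide with $2^{j(\tau(p)-\chi/2)}B_j$ up to a factor tending to $\sqrt{c(p)}$. I would verify the Lyapunov condition: for some small $\delta'>0$,
$$
\frac{N_T\,\EE{\abs{U_j^{(0)}-\EE{U_j^{(0)}}}^{2+\delta'}}}{(N_T\sigma_j^2)^{1+\delta'/2}}\xrightarrow{j\to\infty}0 .
$$
Arguing as in the proof of Proposition \ref{P:cascade_conser_mixte} (Lemmas \ref{lem:1}--\ref{lem:2}), but with the exponent $2+\delta'$, I expect the centered moment to be bounded by $C\,2^{-j(2+\delta')\tau(p)}\sum_{k=0}^{j}2^{k[(2+\delta')\tau(p)-\tau((2+\delta')p)]}$; at $\delta'=0$ the summand exponent equals $2\tau(p)-\tau(2p)<0$ here, so by continuity it stays negative for $\delta'$ small, the sum is bounded, and $\EE{\abs{U_j^{(0)}-\EE{U_j^{(0)}}}^{2+\delta'}}=O(2^{-j(2+\delta')\tau(p)})$. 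Together with $\sigma_j^2\sim c'2^{-2j\tau(p)}$ and $N_T\sim 2^{j\chi}$ the displayed ratio is $O(2^{-j\chi\delta'/2})\to 0$, so the normalised row sums converge to $\mathcal N(0,1)$ and hence $2^{j(\tau(p)-\chi/2)}B_j\to\mathcal N(0,c(p))$.

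\textbf{Main obstacle.} The delicate ingredient is the $(2+\delta')$-th centered moment bound for the single-cascade sum $U_j^{(0)}$: its $2^j$ leaf terms share ancestors and are therefore strongly dependent, so controlling a moment of order strictly above $2$ needs a Rosenthal-- or Burkholder--Davis--Gundy--type estimate (equivalently the Appendix lemmas pushed beyond order $2$) rather than the elementary summation that suffices for the second moment. The remainder is bookkeeping around the explicit $\sigma_j^2$, whose sign structure is exactly what produces the three regimes.
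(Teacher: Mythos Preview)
Your proposal is correct and follows the same overall route as the paper: reduction of $B_j$ to $c_\mu$ times an i.i.d.\ sum over $m$, asymptotic analysis of the single-cascade variance via the branching structure, and a Lyapunov-type CLT in the regime $\tau(2p)-2\tau(p)>0$. Your closed formula for $\sigma_j^2$ is more explicit than the paper's treatment (which simply refers back to the summation inside Lemma~\ref{lem:2}), but it yields the same three regimes and the same constant $c(p)$.

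Where the paper differs is precisely at your ``main obstacle''. Rather than pushing Lemma~\ref{lem:2} beyond exponent $2$ (which, as you note, genuinely requires a new argument since that proof relies on sub-additivity of $x\mapsto x^{(1+\epsilon)/2}$), the paper observes that $2^{j\tau(p)}U_j^{(0)}$ is exactly $\nu_j([0,1])$, the step-$j$ mass of the auxiliary $\mathcal{M}$-cascade on $[0,1]$ with generator $W^p/\EE{W^p}$. The $\tau$-function of this cascade is $q\mapsto \tau(pq)-q\tau(p)$, so Kahane--Peyri\`ere \cite{KahPey76} gives $\sup_j\EE{\nu_j([0,1])^{2+\delta'}}<\infty$ whenever $\tau(p(2+\delta'))-(2+\delta')\tau(p)>0$, which holds for small $\delta'>0$ by continuity from the hypothesis. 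This immediately delivers $\EE{\abs{U_j^{(0)}}^{2+\delta'}}=O\big(2^{-j(2+\delta')\tau(p)}\big)$ and the Lyapunov condition, and as a by-product identifies your $c'$ as $\text{Var}(\nu_\infty([0,1]))$. So the obstacle you flag is real for the route you chose, but evaporates once one recognises the auxiliary-cascade structure.
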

\begin{proof}
Denote $\nu_j^{(m)}$ the measures defined at the step $j$ of the
construction of the {\cal M}-cascade on $[0,1]$ based on
$W^p/\EE{W^p}$:
\begin{equation*}
\nu_j^{(m)} ([0,1]) = 2^{-j} \sum_{r \in \{0,1 \}^j } \prod_{i=1}^j
(W^{(m,r)}_{r\mid i})^p \EE{W^p}^{-j}, \quad \text{ for $m \in
\{0,\dots, N_T-1 \}$.}
\end{equation*}
With this notation we have,
\begin{equation} \label{E:express_Bj}
B_j= \EE{ \mu_{\infty}([0,T])^p} 2^{-j \tau(p)} \sum_{m=0}^{N_T-1}
\left(\nu_j^{(m)} ([0,1])-1\right)
\end{equation}
and using Kahane and Peyri{\`e}re results \cite{KahPey76}, we know
that for each $m$ the sequence $(\nu_j^{(m)} ([0,1]))_j$ is bounded
in $\mathbf{L}^q$ as soon as $\tau(pq)-q \tau(p)>0$.

We first focus on the case $\tau(2p)-2\tau(p)>0$. Hence the sequence
$(\nu_j^{(m)}([0,1]))_j$ is bounded in
$\mathbf{L}^{2+\epsilon}$-norm for some $\epsilon >0$. Using that
$(\nu_j^{(m)}([0,1]) -1)_m$ is a centered i.i.d. sequence and
classical considerations for triangular array of martingale
increments, one can show that a central limit theorem holds:
\begin{equation*}
N_T^{-1/2} \sum_{m=0}^{N_T-1} (\nu_j^{(m)}([0,T]) -1) \xrightarrow{j
\to \infty} \mathcal{N}(0, \text{Var}(\nu_\infty^{(0)}([0,1]))),
\end{equation*}
where $\nu_\infty^{(0)}([0,1])=\lim_{j \to \infty}
\nu_j^{(0)}([0,1])$. From this and \eqref{E:express_Bj} we deduce
\eqref{E:TCL_A_normal} with $c(p)=\EE{\mu_{\infty}([0,T])^p}^2
\text{Var}(\nu_\infty^{(0)}([0,1]))$.

In the cases $\tau(2p)-2 \tau(p) \le 0$, by \eqref{E:express_Bj}
again we have
\begin{equation*}
\text{Var}(B_j)=\left\lfloor 2^{j \chi} \right\rfloor  2^{-j
2\tau(p)} \EE{\mu_{\infty}([0,T])^p}^2
\text{Var}(\nu_j^{(0)}([0,1])).
\end{equation*}
Now $\text{Var}(\nu_j^{(0)}([0,1]))= \EE{ \nu_j^{(0)}([0,1])^2 }-1$
is unbounded as $j \to \infty$, but a careful look at the
computations in Lemma \ref{lem:2} with $\epsilon=1$ yields to
$$
\EE{\nu_j^{(0)}([0,1])^2} \sim_{j \to \infty} \sum_{l=0}^j 2^{-l}
2^{l(2\tau(p)-\tau(2p))}.
$$
We deduce that $\text{Var}(B_j) =O\big( 2^{-j (2 \tau(p)-\chi)}
\sum_{l=0}^j 2^{-l} 2^{l(2\tau(p)-\tau(2p))}\big)$. Then, the
theorem follows in the cases $\tau(2p)-2 \tau(p) =0$ and $\tau(2p)-2
\tau(p) < 0$.
\end{proof}
\begin{rem}
By \eqref{E:decompo_variance} the difference between
$2^{(\tau(p)-\chi)j}\mS(j,p)$ and its limit is decomposed into two
dissimilar error terms: particularly the fact that the contribution
of $B_j$ converges to zero is due to the observation of a large
number of integral scales,
whereas the contribution of $A_j$
vanishes as the sampling step tends to zero.

In the case $\tau(2p)-2 \tau(p)>0$, the contribution of $B_j$
strictly dominates and
$2^{-(\tau(p)-\chi)}\mS(j,p)-\EE{\mu_{\infty}([0,T])^p}$ is of
magnitude $2^{-j \chi} \sim N_T^{-1/2}$.

If $\tau(2p)-2 \tau(p)<0$, the magnitude of $A_j$ and $B_j$ are the
same and $2^{-(\tau(p)-\chi)}\mS(j,p)-\EE{\mu_{\infty}([0,T])^p}$ is
asymptotically bounded by terms of magnitude $
2^{j/2(-\chi+2\tau(2p)-\tau(2p))}$. This rate of convergence is
slower than $N_T^{-1/2}$.
\end{rem}

\section{Extension to wavelet based partition functions}
\label{Sec:wavelets} The behavior of the partition function provides
an evaluation for the regularity of the sample path of the process
$t \mapsto \tilde{\mu}\left([0,t]\right)$. However, it is more
natural to assess this regularity via the behavior of wavelet
coefficients.

\subsection{Notations}
In this section we assume, for notational convenience, that $T=1$.
Consider now $g$ a ``generalized box'' function. It is a real valued
function that satisfies the following assumptions
\begin{itemize}
\item[(H1)]  $g$ has compact support included in $[0,2^J]$, for some $J \ge
0$.
\item[(H2)] $g$ is piecewise continuous.
\item[(H3)] $g$ is at least non zero on an interval.
\end{itemize}
Following the common wavelet notation, we define \begin{equation*}
g_{j,k}(t) = g(2^jt-k). \end{equation*}
 The support of $g_{j,k}(t)$
is
\begin{equation} \label{eq:supppsi} \mbox{Supp } g_{j,k} = [2^{-j}k
,2^{-j}k+2^{J-j}]. \end{equation} In the sequel, if $\mu$ is a
random measure, for any Borel function $f$
we will use the notation
\begin{equation*}
  \BB \mu f  = \int f(t) d\mu(t).
\end{equation*}


\subsection{The generalized partition function : scaling properties}
We define the generalized partition function of an {\cal M}-cascade
$\mu_{\infty}$ on $[0,1]$ at scale $2^{-j}$ as
\begin{equation}
\label{eq:wpart} \Sw(j,p) = \sum_{k=0}^{2^j-2^J-1}  |\BB
{\mu_{\infty}} {g_{j,k}}|^p .
\end{equation}
Remark that for simplicity we removed a finite number of border
terms, and that, in the case $g(t)$ is
the ``box'' function $g(t) = 1_{[0,1]}(t)$  we recover the partition
function of Section \ref{sec:mcascades}.

Let us study the scaling of $\EE{\Sw(j,p)}$.
\begin{prop}
\label{P:scaling_Sw} Assume \eqref{eq:hypo1}--\eqref{eq:momentsW}.
Then, we have $K_1 2^{-j \tau(p)} \le \EE{\Sw(j,p)} \le K_2 2^{-j
\tau(p)}$ for $K_1$, $K_2$, two positive constants depending on $p$,
$W$ and $g$.
\end{prop}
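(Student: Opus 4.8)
The plan is to reduce $\EE{\Sw(j,p)}$ to a single reference moment by exploiting the self-similarity of the cascade at the coarse scale $2^{-(j-J)}$, and then to read off the power law from the standing identity $2^{-j\tau(p)}=2^{j}\,2^{-jp}\EE{W^p}^{j}$ (which is just \eqref{eq:taup} rewritten). First I would write $\EE{\Sw(j,p)}=\sum_{k=0}^{2^j-2^J-1}\EE{\abs{\BB{\mu_\infty}{g_{j,k}}}^p}$ and treat each coefficient separately. Set $n=j-J$. By \eqref{eq:supppsi} the support of $g_{j,k}$ has length $2^{-n}$, so it meets at most two adjacent dyadic cells $I_{n,a},I_{n,a+1}$ at level $n$, with $a=\lfloor k2^{-J}\rfloor$; writing $k=2^Ja+s$ with $0\le s<2^J$, the cell $I_{n,a+1}$ is hit exactly when $s>0$. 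On each such cell the Mandelbrot star equation \eqref{E:auto_sim_simple} factorises the restriction of $\mu_\infty$ as the ancestral weight product $\prod_{i=1}^n W_{\rho\mid i}$ times an independent rescaled copy $\bar\mu_\infty^{(\rho)}$ of the cascade. Changing variables back to $[0,1]$ turns $g_{j,k}$ into one of \emph{finitely many} fixed shapes $h$ obtained from $g$ by an integer shift (indexed by $s$ and by the left/right cell) and truncation to $[0,1]$. Thus $\BB{\mu_\infty}{g_{j,k}}=2^{-n}\big(\prod_{i=1}^n W_{\rho\mid i}\big)\BB{\bar\mu_\infty^{(\rho)}}{h}$ in the aligned case, and a sum of two such terms otherwise, where the weight product and the sub-cascade coefficient use disjoint families of weights and are therefore independent.

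Given this factorisation the moment bookkeeping is routine. For the upper bound I would dominate $\abs{\BB{\mu_\infty}{g_{j,k}}}^p$ by the sum of the one or two cell contributions — using subadditivity of $x\mapsto x^p$ for $0<p\le 1$ and the convexity constant $2^{p-1}$ for $p\ge 1$ — and then use independence to obtain $\EE{\big(\prod_{i=1}^n W_{\rho\mid i}\big)^p\abs{\BB{\bar\mu_\infty^{(\rho)}}{h}}^p}=\EE{W^p}^n\,\EE{\abs{\BB{\mu_\infty}{h}}^p}$. Writing $C=\max_h\EE{\abs{\BB{\mu_\infty}{h}}^p}$ over the finitely many shapes, each term is at most $2^p C\,2^{-np}\EE{W^p}^n$, and summing the $\sim 2^j$ terms yields $\EE{\Sw(j,p)}\le K_2\,2^{-j\tau(p)}$ since $2^j 2^{-np}\EE{W^p}^n=2^{Jp}\EE{W^p}^{-J}\,2^{-j\tau(p)}$. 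For the lower bound I would keep only the aligned indices $k=2^Ja$ (i.e. $s=0$): these involve a single cell, so no cancellation can occur, they contribute exactly $2^{-np}\EE{W^p}^n\,\EE{\abs{\BB{\mu_\infty}{\tilde g}}^p}$ with $\tilde g=g(2^J\,\cdot)\mathbf{1}_{[0,1]}$, and they number $\sim 2^{n}$, so that their sum alone already gives $\EE{\Sw(j,p)}\ge K_1\,2^{-j\tau(p)}$.

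The substantive point, and the step I expect to be the main obstacle, is to show that the reference moments $\EE{\abs{\BB{\mu_\infty}{h}}^p}$ are simultaneously finite and strictly positive, uniformly over the finitely many shapes $h$. Finiteness reduces to moments of the total mass: since each $h$ is bounded with support in $[0,1]$, the estimate $\abs{\BB{\mu_\infty}{h}}\le \norme{g}_\infty\,\mu_\infty([0,1])$ handles the positive orders — finite under \eqref{eq:hypo1}--\eqref{eq:momentsW}, and for $p>1$ precisely when $\EE{\mu_\infty([0,1])^p}<\infty$, i.e. $\tau(p)>0$, by \cite{KahPey76} — while finiteness of the negative-order moments of $\mu_\infty([0,1])$ is provided by \cite{Mol96}. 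Positivity of $\EE{\abs{\BB{\mu_\infty}{\tilde g}}^p}$ follows from hypothesis (H3) together with $\PP(W>0)=1$, which forces $\mu_\infty$ to charge every subinterval and hence $\BB{\mu_\infty}{\tilde g}\neq 0$ on an event of positive probability. The genuinely delicate configuration is a signed $g$ with $p<0$, where the coefficient may vanish and one needs control of the density of $\BB{\mu_\infty}{h}$ near the origin; there I would rely on the aligned terms for the lower bound as above, and either invoke such a nondegeneracy estimate or impose the natural sign condition on $g$ for the upper bound. Absorbing all the $g$-, $p$- and $W$-dependent constants ($C$, $\EE{\abs{\BB{\mu_\infty}{\tilde g}}^p}$, and the fixed powers $2^{\pm Jp}\EE{W^p}^{\mp J}$) into $K_1,K_2$ then completes the proof.
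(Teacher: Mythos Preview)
Your proposal is correct, and the lower bound is exactly the paper's argument: keep only the aligned indices $k=2^Jk'$, factorise via the star equation at level $j-J$, and use that $\EE{|\langle\mu_\infty,g_{J,0}\rangle|^p}>0$. The paper isolates this last positivity as a separate lemma (Lemma~\ref{L:mu_f_pos}) and proves it by a recursion/independence argument using $\PP(W=1)<1$; your justification (``$\mu_\infty$ charges every subinterval'') is a bit loose for signed $g$, but the conclusion is the same. Where you differ is the upper bound: you carry out a full self-similarity factorisation at level $j-J$ for \emph{every} coefficient, splitting into one-cell and two-cell cases and tracking a finite family of shapes $h$. The paper is much more direct here: since $|g|\le\|g\|_\infty$, one simply has $|\langle\mu_\infty,g_{j,k}\rangle|\le C\,\mu_\infty(\mathrm{supp}\,g_{j,k})$, whence $\EE{|\langle\mu_\infty,g_{j,k}\rangle|^p}\le C\,\EE{\mu_\infty(I_{j,k}\cup\cdots\cup I_{j,k+2^J-1})^p}=K\,2^{-j(\tau(p)+1)}$, reducing immediately to the box case. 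Your route is correct but works harder than necessary for this proposition; its real payoff is the structural decomposition it provides, which is essentially what the paper later needs in Lemma~\ref{lem:4} for the $L^{1+\epsilon}$ estimate. Finally, both your upper bound and the paper's only go through as written for $p>0$; this is all that is used downstream (Theorem~\ref{th:wMSscaling} is stated for $p>0$), so your caveats about signed $g$ with $p<0$ are apt but moot here.
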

\begin{proof}
Since $|g(t)|$ is clearly a bounded function, we have
\begin{equation*}
\EE{|\BB {\mu_{\infty}} {g_{j,k}}|^p} \le C \EE{\mu_{\infty}([2^{-j}
k, 2^{-j} k + 2^{J-j}])^p},
\end{equation*}
where $C$ is a constant. We write $ \mu_{\infty}([2^{-j} k, 2^{-j} k
+ 2^{J-j}]) = \sum_{l=0}^{2^J-1} \mu_{\infty}(I_{j,k+l}),$
 and deduce
 \begin{equation}
\label{eq:gscaling} \EE{|\BB {\mu_{\infty}} {g_{j,k}}|^p} \le C
\EE{|\mu_{\infty}[0,  2^{-j}]  |^p} = K 2^{-j(\tau(p)+1)},
\end{equation} where $K$ only depends on $g$ and the law of $W$. By
\eqref{eq:wpart} we get the upper bound for $\EE{\Sw(j,p)}$.

For the lower bound, let us write that $\Sw(j,p)$ is greater than
$$
\sum_{k'=0}^{2^{j-J}-1}  |\BB {\mu_{\infty}} {g_{j,2^J k'}}|^p.
$$
But $g_{j,2^J k'}$ is supported on $[k' 2^{J-j},(k'+1)2^{J-j}]$,
thus applying Lemma \ref{L:scaling_mu} in the Appendix
\ref{app:lem34} with $a=j-J$, we deduce:
\begin{equation*}
\BB {\mu_{\infty}} {g_{j,2^J k'}}=2^{J-j} \left(  \prod_{i=1}^{j-J}
W_{r|i} \right) Z
\end{equation*}
where, in law, $Z$ is equal to $\BB{\mu_{\infty}}{g_{J,0}}$. Thus
$\EE{|\BB {\mu_{\infty}} {g_{j,k}}|^p}$ is greater than
$$
2^{p(J-j)} \EE{ W ^p}^{j-J} \EE{  | \BB{\mu_{\infty}}{g_{J,0}}|^p
}=K 2^{-j (\tau(p)+1)} \EE{  |\BB{\mu_{\infty}}{g_{J,0}}|^p }.
$$
Applying Lemma \ref{L:mu_f_pos} with $f=g_{J,0}$ shows that  $\EE{
|\BB{\mu_{\infty}}{g_{J,0}}|^p}$ is some positive constant. Then the
lower bound for $\EE{\Sw(j,p)}$ easily follows.
\end{proof}


\subsection{The partition function in the mixed asymptotic framework}
Following \eqref{eq:partitionmix1}, we define the partition function in the mixed asymptotic framework as
\begin{equation*}
\mSw(j,p)
 = \sum_{m=0}^{N_{T}-1}  \Sw^{(m)}(j,p),
 \end{equation*}
 where $\Sw^{(m)}(j,p)$ is the partition function of $\mu_{\infty}^{(m)}$, i.e.,
\begin{equation*}
 \Sw^{(m)}(j,p) = \sum_{k=0}^{2^j-2^J-1} \left|\BB {\mu_{\infty}^{(m)}} {g_{j,k}}\right|^p.
 \end{equation*}
We have the following result.
\begin{theo}
\label{th:wMSscaling}
{\bf Scaling of the generalized partition function in a mixed asymptotic } \\
Let $p >0$, then under the same assumptions as Theorem
\ref{th:MSscaling} the power law scaling of $\mSw(j,p)$ is given by
\begin{equation*}
\lim_{j \rightarrow \infty} \frac{\log_{2}\mSw(j,p)}{-j} \operatornamewithlimits{\longrightarrow}_{a.s.} \tau_{\chi}(p).
\end{equation*}
\end{theo}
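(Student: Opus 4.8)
The plan is to show that the wavelet-based partition function $\mSw(j,p)$ scales exactly like the box-counting partition function $\mS(j,p)$ of Theorem~\ref{th:MSscaling}, by exploiting the fact that the generalized partition function is, up to bounded multiplicative factors, comparable to the ordinary one. Since we only treat $p>0$, the key structural observation is that Proposition~\ref{P:cascade_conser_mixte} is flexible enough to accommodate the wavelet coefficients directly: writing $\BB{\mu_{\infty}^{(m)}}{g_{j,k}}$ via the self-similarity (Mandelbrot star) relation, each wavelet coefficient at scale $2^{-j}$ over an interval of the form $[2^{-j}k,2^{-j}k+2^{J-j}]$ factorizes as a product $\prod_{i}W^{(m)}_{r|i}$ times a rescaled bounded functional of a daughter cascade, exactly as in the lower-bound computation of Proposition~\ref{P:scaling_Sw}.

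\textbf{Case $p\in(p_\chi^-,p_\chi^+)$.} First I would treat the ``good'' range $0<p<p_\chi^+$ (and below $p_\chi^-$ is vacuous here since $p>0$), where the scaling follows its mean value. The idea is to cast $\mSw(j,p)$ into the abstract framework of Section~\ref{S:prop_gene}. Group the indices $k$ into blocks of $2^J$ consecutive intervals so that, using Lemma~\ref{L:scaling_mu} as in Proposition~\ref{P:scaling_Sw}, each group contributes a term of the form $2^{-jp}\big(\prod_{i=1}^{j-J}W^{(m)}_{r|i}\big)^p Z^{(m,r)}$ with $Z^{(m,r)}$ distributed as $\BB{\mu_{\infty}}{g_{J,0}}^{p}$, a variable with finite $(1+\epsilon)$-moment by the moment assumptions \eqref{eq:momentsW} together with Remark~\ref{R:moment}. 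The overlapping wavelet supports create only a finite number ($2^J$) of interleaved subfamilies, each of which has precisely the structure of $\mathcal{N}_j(p)$; applying Proposition~\ref{P:cascade_conser_mixte} to each subfamily and summing gives the almost sure convergence $2^{j(\tau(p)-\chi)}\mSw(j,p)\to c$ for a positive constant $c$, hence the exponent $\tau_\chi(p)=\tau(p)-\chi$.

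\textbf{Case $p\ge p_\chi^+$.} For the linearization range I would imitate Section~\ref{sec:proofScaling2}. The upper bound on $\EE{\Sw(j,p)}$ from Proposition~\ref{P:scaling_Sw} together with the comparison $|\BB{\mu_\infty}{g_{j,k}}|\le C\,\mu_\infty([2^{-j}k,2^{-j}k+2^{J-j}])$ lets me dominate the wavelet supremum by the measure supremum $\mS^*(j)$, whose scaling exponent is $h_\chi^+$ by Theorem~\ref{th:MSup}. The sub-additivity argument of \eqref{eq:interm7}--\eqref{eq:interm8}, applied verbatim to $m_{\sup,\inf}$ for $\mSw$, then pins the exponent at $h_\chi^+ p$; the matching lower bound comes from the single dominating term controlled through the block-supremum estimate.

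\textbf{Main obstacle.} The delicate point is that, unlike the box coefficients $\mu_\infty(I_{j,k})$ which are strictly positive, the wavelet coefficients $\BB{\mu_\infty}{g_{j,k}}$ need not be positive nor bounded below, so I cannot directly transfer the supremum/infimum lower bounds used in Theorem~\ref{th:MSup}. I expect the hardest step to be establishing a lower bound on the linearization side: one must show that the sparse subsequence $\BB{\mu_\infty}{g_{j,2^Jk'}}$ (whose supports are disjoint, by \eqref{eq:supppsi}) already produces terms as large as $2^{-jh_\chi^+ p}$, using that $\BB{\mu_\infty}{g_{J,0}}$ has the required moments and is a genuine positive-probability functional (Lemma~\ref{L:mu_f_pos}). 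This is why the statement is restricted to $p>0$: it avoids the problem of vanishing wavelet coefficients producing spurious infima, which would otherwise require controlling the lower tail of $\BB{\mu_\infty}{g_{J,0}}$ near zero.
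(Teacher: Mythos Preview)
Your decomposition for $p\in(0,p_\chi^+)$ has a genuine gap. You claim that splitting the index set into the $2^J$ arithmetic progressions $k\equiv s\pmod{2^J}$ produces $2^J$ sums ``each of which has precisely the structure of $\mathcal{N}_j(p)$''. This is true only for $s=0$: then the support of $g_{j,2^Jk'}$ lies inside the single dyadic interval $I_{j-J,k'}$, and Lemma~\ref{L:scaling_mu} yields the factorization $2^{-(j-J)p}\big(\prod_{i\le j-J}W_{r\mid i}\big)^p Z^{(r)}$ required by Proposition~\ref{P:cascade_conser_mixte}. For $s\neq 0$ the support of $g_{j,2^Jk'+s}$ straddles $I_{j-J,k'}$ and $I_{j-J,k'+1}$, and the deepest common dyadic ancestor of these two intervals sits at a level $a\le j-J-1$ that \emph{varies with $k'$} (it is the root when $k'=2^{j-J-1}-1$). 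No single product $\prod_i W_{r\mid i}^p$ can then be factored out, and the residual terms are neither independent nor identically distributed across $k'$, so Proposition~\ref{P:cascade_conser_mixte} does not apply. This is exactly the obstruction the paper isolates: its Lemma~\ref{lem:4} reorganizes \eqref{eq:wpart} not by residue class but by the depth $a$ of that common ancestor (see \eqref{E:def_Xsa}--\eqref{eq:supincl}), recovers the moment bound of Lemma~\ref{lem:2}, and then (via Lemma~\ref{lem:3}) the Borel--Cantelli argument of Proposition~\ref{P:cascade_conser_mixte} goes through.

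A weaker version of your idea does survive and is arguably simpler than the paper's route. The aligned subfamily $s=0$ alone, together with Lemma~\ref{L:mu_f_pos} to ensure $\EE{|\BB{\mu_\infty}{g_{J,0}}|^p}>0$, gives via Proposition~\ref{P:cascade_conser_mixte} the lower bound $\mSw(j,p)\ge c\,2^{-j(\tau(p)-\chi)}$ a.s.\ for $p<p_\chi^+$; the pointwise domination $|\BB{\mu_\infty}{g_{j,k}}|^p\le C\sum_{l<2^J}\mu_\infty(I_{j,k+l})^p$ gives $\mSw(j,p)\le C'\,\mS(j,p)$ for all $p>0$. Sandwiching and invoking Theorem~\ref{th:MSscaling} then yields $\tfrac{1}{-j}\log_2\mSw(j,p)\to\tau_\chi(p)$, though not the stronger convergence $2^{j\tau_\chi(p)}\mSw(j,p)\to c$ that you assert. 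Your treatment of $p\ge p_\chi^+$ via the sub-additivity argument of Section~\ref{sec:proofScaling2} (which transfers verbatim since $\mSw(j,p)^\rho\le\mSw(j,\rho p)$ and $\mSw(j,q)\le\mSw(j,p)^{(q-q')/p}\mSw(j,q')$ hold for wavelet coefficients as well) is correct and in fact supplies a step the paper's written proof leaves implicit: Lemmas~\ref{lem:3}--\ref{lem:4} only deliver $\tau(p)-\chi$, which equals $\tau_\chi(p)$ on $(0,p_\chi^+)$ alone.
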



\begin{proof}
Using Proposition \ref{P:scaling_Sw} we have,
 $\lim_{j
\rightarrow \infty} \frac{1}{-j} \log_{2}\EE{\mSw(j,p)} =
\tau_{\chi}(p), $
and
 we just need to prove that almost surely,
 $$
\mSw(j,p)-\EE{\mSw(j,p)}=o(2^{-j\tau_\chi(p)}).
 $$
Using lemma \ref{lem:3} and \ref{lem:4} of appendix \ref{app:lem34},
this is done in the exact same way as the proof of
\eqref{E:notre_but_cv} in Proposition \ref{P:cascade_conser_mixte}.
 \end{proof}


\section{Link with Besov spaces}

\label{S:Link_with}
Following \cite{Jaf00}, one may define for a measure $\mu$ on
$[0,T]$,  the boundary of its Besov domain as the function $s_\mu:
(0,\infty) \to \mathbb{R}\cup \{\infty\}$ given by
\begin{equation*}
s_\mu(1/p)=\sup \left\{ \sigma \in \mathbb{R} \mid \sup_{j \ge 0}
2^{j {\sigma}} \left( 2^{-j} \sum_{k=0}^{2^j-1} |  \mu(I_{j,k})|^{p}
\right)^{ 1/p} < \infty \right\}.
\end{equation*}
The following proposition can be shown (see \cite{Jaf00}).
\begin{prop} \label{P:prop_s} The function $s_\mu$ is an increasing, concave
function, with a derivative bounded by $1$.
\end{prop}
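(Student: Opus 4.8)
The plan is to reduce the three assertions to elementary monotonicity and convexity properties of finite $\ell^p$ norms, verified scale by scale, and then to push these properties through a $\liminf$ over scales. First I would rewrite the definition in a workable form. For $p>0$ and $j\ge 0$ set
\[
c_j(p)=\left(2^{-j}\sum_{k=0}^{2^j-1}|\mu(I_{j,k})|^{p}\right)^{1/p},
\]
so that the condition defining $s_\mu$ reads $\sup_{j\ge 0}2^{j\sigma}c_j(p)<\infty$. Since $\mu$ is a finite measure, each $c_j(p)$ is finite and strictly positive, and the admissible set of $\sigma$ is a half-line: if $\sigma$ is admissible so is every $\sigma'<\sigma$, because $2^{j(\sigma'-\sigma)}\le 1$ for $j\ge 0$. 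Estimating $\sup_j\big(j\sigma+\log_2 c_j(p)\big)$ and noting that the finitely many small-$j$ terms only add a finite constant, I would obtain the closed form
\[
s_\mu(1/p)=\liminf_{j\to\infty}\frac{-\log_2 c_j(p)}{j}.
\]
Writing $x=1/p$ and $f_j(x):=\tfrac1j\big(-\log_2 c_j(1/x)\big)$, we have $s_\mu=\liminf_j f_j$, so it suffices to prove each property for every $f_j$ and then transfer it.

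For fixed $j$, $c_j(p)$ is the $L^p$ norm of the vector $\big(\mu(I_{j,k})\big)_k$ for the uniform probability measure on $\{0,\dots,2^j-1\}$, and three classical facts apply. First, by the power-mean inequality $p\mapsto c_j(p)$ is non-decreasing, hence $x\mapsto c_j(1/x)$ is non-increasing and $f_j$ is non-decreasing. Second, by Lyapunov's interpolation inequality $1/p\mapsto\log c_j(p)$ is convex, so $x\mapsto-\log_2 c_j(1/x)$, and therefore $f_j$, is concave. Third, for the slope bound write $c_j(p)=2^{-j/p}\big(\sum_k|\mu(I_{j,k})|^{p}\big)^{1/p}$, so that for $p_1<p_2$ (i.e. $x_1>x_2$),
\[
f_j(x_1)-f_j(x_2)=\frac1j\log_2\frac{c_j(p_2)}{c_j(p_1)}
=(x_1-x_2)+\frac1j\log_2\frac{\big(\sum_k|\mu(I_{j,k})|^{p_2}\big)^{1/p_2}}{\big(\sum_k|\mu(I_{j,k})|^{p_1}\big)^{1/p_1}}.
\]
The remaining ratio is $\le 1$ by the monotonicity of unnormalized $\ell^p$ norms ($\|a\|_{\ell^{p_2}}\le\|a\|_{\ell^{p_1}}$ for $p_1\le p_2$), so $f_j(x)-x$ is non-increasing in $x$.

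Finally I would transfer each property to $s_\mu=\liminf_j f_j$. Monotonicity of $f_j$ and the property ``$f_j(\cdot)-x$ non-increasing'' both pass to the $\liminf$, because $\liminf$ is monotone ($a_j\le b_j$ for all $j$ gives $\liminf a_j\le\liminf b_j$). For concavity, write $\liminf_j f_j=\lim_n g_n$ with $g_n=\inf_{j\ge n}f_j$: each $g_n$ is concave as an infimum of concave functions, the sequence $(g_n)$ is non-decreasing, and the pointwise (increasing) limit of concave functions is again concave. This gives that $s_\mu$ is non-decreasing, concave, and satisfies that $s_\mu(x)-x$ is non-increasing; the latter is exactly the statement that the one-sided derivatives of $s_\mu$ are $\le 1$, and combined with monotonicity they lie in $[0,1]$.

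The main obstacle is not any single inequality — the three per-scale facts are standard — but the bookkeeping of the first step: rigorously justifying the closed-form $\liminf$ expression for $s_\mu$ (treating the borderline value $\sigma=\sigma^\ast$ and the harmless finitely many small-$j$ terms), and checking that the $\liminf$ genuinely preserves concavity through the infimum-then-increasing-limit argument, with due care at points where $s_\mu$ equals $+\infty$.
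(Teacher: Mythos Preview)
Your argument is correct. Note, however, that the paper does not actually prove this proposition: it simply cites \cite{Jaf00} and adds the remark that the bound $s_\mu'\le 1$ ``is a simple consequence of the Sobolev embedding for Besov spaces.'' You are therefore supplying a self-contained proof where the paper offers only a reference.

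Your route is genuinely different from the Besov-embedding viewpoint the paper alludes to. Rather than invoking functional-analytic inclusions between $B^{s,q}_p$ spaces, you work scale by scale with the finite vectors $(\mu(I_{j,k}))_k$ and three elementary $\ell^p$ facts---power-mean monotonicity, Lyapunov log-convexity, and the decrease of unnormalised $\ell^p$ norms in $p$---and then push each property through the $\liminf$. This is more elementary and makes the three conclusions visibly independent; the embedding argument is shorter for a reader who already has that machinery, but less transparent.

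One minor simplification: your concern about $s_\mu=+\infty$ is unnecessary for a nonzero finite measure. For $p\le 1$ one has $\big(\sum_k\mu(I_{j,k})^p\big)^{1/p}\ge\sum_k\mu(I_{j,k})=\mu([0,T])$, and for $p\ge 1$ the same norm is at least $\max_k\mu(I_{j,k})\ge 2^{-j}\mu([0,T])$; either way $f_j(x)\le x+1+O(1/j)$, so the $\liminf$ is finite and the concavity transfer via $g_n=\inf_{j\ge n}f_j$ involves only real-valued concave functions.
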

Let us stress that the condition $s'_\mu(1/p) \le 1$ is a simple
consequence of the Sobolev embedding for Besov spaces. The Theorem
\ref{th:Sscaling} characterizes the Besov domain for $\mu_{\infty}$
a {\cal M}-cascade on $[0,T]$:
\begin{equation*}
\forall p >0, \quad s_{\mu_{\infty}}(1/p)=
\begin{cases}
\frac{\tau(p)+1}{p} & \text{ if $\frac{1}{p}> \frac{1}{p_0}$}
\\
h_0^+ + \frac{1}{p} & \text{ if $\frac{1}{p}\le\frac{1}{p_0}$}
\end{cases}.
\end{equation*}
If we denote $s(\frac{1}{p})=\frac{\tau(p)+1}{p}$ then, it is simply
checked that the condition $1/p
>1  /p^+_0$ is equivalent to $s'(1/p)<1$. Hence Proposition
\ref{P:prop_s} explains why for $1/p \le 1/p^{+}_0$ the boundary of
the Besov domain must be linear with a slope equal to one.

In mixed asymptotic the support of the measure grows with $j$
but we can still define, using the notations of Section
\ref{Sec:mas}, the index:
\begin{equation*}
s^\chi_{\tilde\mu}(1/p)=\sup \left\{ \sigma \in \mathbb{R} \mid
\sup_{j \ge 0} 2^{j {\sigma}} \left( N_T^{-1} 2^{-j} \sum_{k=0}^{N_T
2^j-1} | \tilde{\mu}(I_{j,k})|^{p} \right)^{ 1/p} < \infty \right\}.
\end{equation*}
Then, it is simply checked that Theorem \ref{th:MSscaling} implies
$s^\chi_{\tilde\mu}(1/p)=s(1/p)$ when $s'(1/p) < 1+\chi$, and
$s^\chi_{\tilde\mu}(1/p)=h_\chi^+ + \frac{1+\chi}{p}$ otherwise.
This shows how the linear part in $s^\chi_{\tilde\mu}$ is shifted to
larger values of $p$ under the mixed asymptotic framework.

\section{Numerical examples and applications}
\label{Sec:num}

\begin{figure}
\includegraphics[height=10cm]{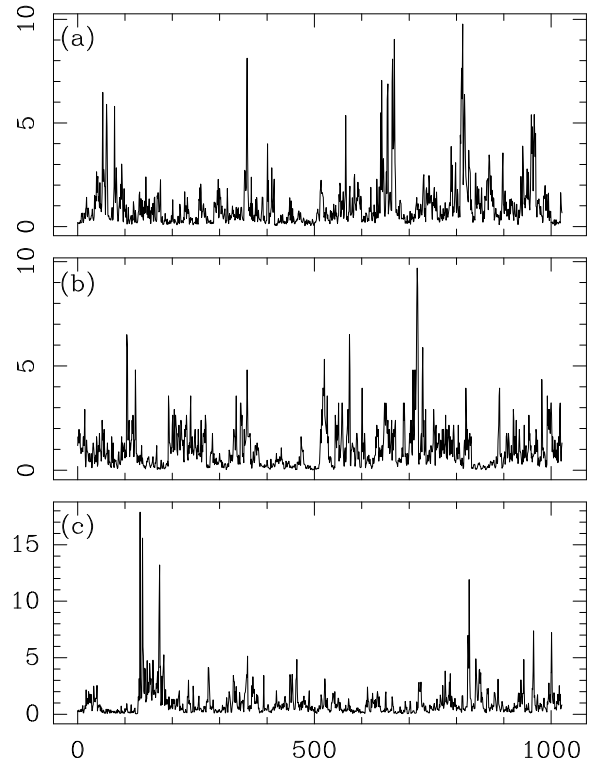}
\caption{\label{fig0} Three synthetic samples of $\cal M$-cascades
with $T = 2^{13}$ and $\lambda^2 = 0.2$: (a) log-Normal sample, (b)
log-Poisson sample with $\delta = -0.1$ and (c) log-Gamma sample
with $\beta = 10$. In fact we used ${\mu_{j_{\max}+5}[n,n+1]}_{n=0
\ldots L}$ as a proxy of $\mu_\infty[n,n+1]$ with $j_{\max} =
\log_2(T) = 13$ (see Eq. \eqref{eq:def_mul} for the definition of
$\mu_j$).}
\end{figure}

Our goal in this section is not to focus on statistical issues and
notably on precise estimates of multifractal exponents from
empirical data. We rather aim at illustrating the results of theorem
\ref{th:MSscaling} on precise examples, namely random cascades with
respectively log-normal, log-Poisson and log-Gamma statistics. For
the sake of simplicity we will consider exclusively scaling of
partition function for $p \geq 0$\footnote{Numerical methods for
estimating $\tau(p)$ for $p<0$ are trickier to handle}. In order to
facilitate the comparison of the three models, $\lambda^2$ will
represent the so-called intermittency coefficient, i.e.,
\begin{equation*}
  \lambda^2 = -\tau''(0)
\end{equation*}
where $\tau(p)$ is defined in Eq. \eqref{eq:taup}. This value will
be fixed for the three considered models. Let $\{W_r\}$ be the
cascade random generators as defined in Eq. \eqref{eq:def_mul}
and let $\omega_r = \ln W_r$.

In the simplest, log-Normal case the $\{\omega_r\}_r$ are normally
distributed random variables of variance $\lambda^2 \ln(2)$. Thanks
to the condition $\EE{W_r} = \EE{e^{\omega_r}} = 1$, their mean is
necessarily $-\lambda^2\ln(2)/2$. In that case, the cumulant
generating function $\tau(p)$ defined in Eq. \eqref{eq:taup} is
simply a parabola:
\begin{equation*}
  \tau^{n}(p) = p (1+\frac{\lambda^2}{2}) - \frac{\lambda^2}{2} p^2 -1
\end{equation*}

In the log-Poisson case, the variables $\omega_r$ are written as
$\omega_r = m_0 \ln(2) + \delta n_r$ where the $n_r$ are integers
distributed according to a Poisson law of mean $\gamma \ln(2)$. It
results that $\tau(p) = p(1-m_0) + \gamma(1-e^{p \delta}) -1$. If
one sets $\tau(1) = 0$ and $\tau''(0) = -\lambda^2$, one finally
gets the expression of $\tau(p)$ of a log-Poisson cascade with
intermittency coefficient $\lambda^2$:
\begin{equation}
\label{eq:taulp}
  \tau^{p}(p) = p  \left(1+\frac{\lambda^2}{\delta^2}(e^{\delta}-1)\right) + \frac{\lambda^2}{\delta^2}(1-e^{p \delta})
\end{equation}

In third case the variables $\omega_r$ are drawn from a
Gamma distribution. If $x$ is a random variable of pdf
$\beta^{\alpha \ln(2)} x^{\alpha \ln(2)-1} e^{-\beta x}
/\Gamma(\alpha \ln(2))$, then one chooses $\omega_r = x+ m_0 \ln(2)$
and it is easy to show that $\tau(p)$ is defined only for $p <
\beta$ and in this case $\tau(p) =  p(1-m_0)+\alpha (1-p/\beta)$. By
fixing $\tau(1) = 1$ and $\tau''(0) = \lambda^2$, on obtains:
\begin{equation}
\label{eq:taulg}
  \tau^{g}(p) = p \left(1-\lambda^2 \beta^2 \ln \frac{\beta-1}{\beta} \right) +
  \lambda^2 \beta^2 \ln \frac{\beta-p}{\beta}
\end{equation}
Notice that one recovers the log-normal case from both 
log-Poisson and log-Gamma statistics in the limits $\delta \rightarrow 0$ and
$\beta \rightarrow +\infty$ respectively.

For the 3 cases, one can explicitely compute all the mixed
asymptotic exponents as functions of $\chi$: In particular the
values of $p_\chi^{\pm}$ read:
\begin{eqnarray*}
   p^{\pm}_{\chi,n} & = & \pm \sqrt{\frac{2(1+\chi)}{\lambda^2}} \\
   p^{\pm}_{\chi,p} & = & \frac{W\left(\pm,\frac{\delta^2(1+\chi)-\lambda^2}{e \lambda^2}\right)+1}{\delta} = p^{\pm}_{\chi,n} + \frac{2(1+\chi)}{3\lambda^2}\delta + \mathcal{O}(\delta^2)\\
   p^{\pm}_{\chi,g} & = & \beta \left[1+\frac{1+\chi}{\lambda2 \beta^2}-e^{1+W(\pm,-e^{-1-\frac{1+\chi}{\lambda^2 \beta^2}})}   \right] = p^{\pm}_{\chi,n} - \frac{4(1+\chi)}{3\lambda^2 \beta} + \mathcal{O}(\beta^{-2})
\end{eqnarray*}
where suffixes $n,p,g$ stand for respectively log-normal,
log-Poisson and log-Gamma cascades and $W(\pm,z)$ represent the two
branches of the Lambert $W(z)$ function, solution of $W(z) e^{W(z)}
= z$ that take (respectively positive and negative) real values for
the considered arguments. For log-Poisson and log-Gamma cases, we
have also indicated the asymptotic behavior in the limits $\delta
\rightarrow 0$ and $\beta \rightarrow \infty$. The values
$h_\chi^{\pm}$ can be easily deduced form their definition:
$h_{\chi}^{\pm} = \tau'(p_\chi^{\pm})$.

In Fig. \ref{fig0} is plotted a sample of each of the three
examples of ${\cal M}$-cascades. We
chose $T = 2^{13}$ and $\lambda^2 = 0.2$ for all models while, in
the log-Poisson case we have set $\delta = -0.1$ and $\beta = 10$ in
the log-Gamma model. In each case, an approximation of the  ${\cal M}$-cascade sample is
generated. We chose to generate $\mu_{18}$ (as defined by  Eq. \eqref{eq:def_mul}) so that the smallest scale involved
is $l_{min} = 2^{-18}T = 2^{-5}$ (we have checked that the
results reported below do not depend on $l_{min}$). An approximation of $\tilde \mu$ is generated by concatenating i.i.d. realizations of
$\mu_{18}$.
Then, for each model and
for each chosen value of $\chi$,  $\tau_\chi(p)$ ($p =
0 \ldots 6$) was obtained from a least square fit of the curve
$\log_2 S_{\tilde \mu}(j,p)$ versus $j$ over the range $j = 0 \ldots
6$. Let us recall that,   for each value of $j$, the mixed asymptotic regime corresponds to
 sampling $\tilde \mu$  at scale  $l = 2^{-j}T$ and over an interval of size $L = 2^{j\chi}T$.
The exponents reported in figs. \ref{fig1} and \ref{fig2}
represent the mean values of exponents estimated in that way using
$N = 130$ experiments.

\begin{figure}
\includegraphics[height=6cm]{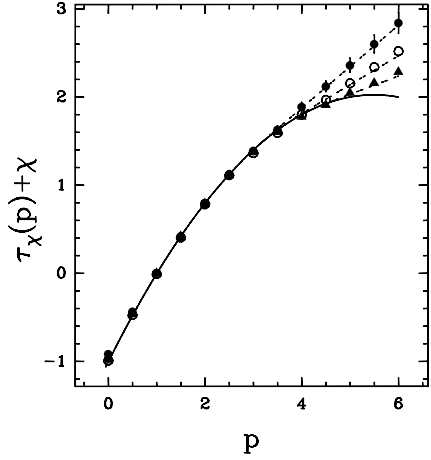}
\caption{\label{fig1} Estimates of the function $\tau_\chi(p)$ of
  log-normal cascades with $\lambda^2 = 0.2$ for $\chi = 0$ ($\bullet$)
$\chi = 0.5$ ($\circ$) and $\chi = 1$ ($\blacktriangle$). Dashed
lines represent the corresponding analytical expression from theorem
\ref{th:MSscaling} and the solid line represents the function
$\tau(q)$ as defined in Eq. \eqref{eq:taup}. $\tau_\chi$ is
estimated from the average over 130 trials of $2^{j\chi}$ cascades
samples. Error bars are reported on the $\chi = 0$ curve as vertical
solid bar. These errors are of order of symbol size.}
\end{figure}
\begin{figure}
\includegraphics[height=5cm]{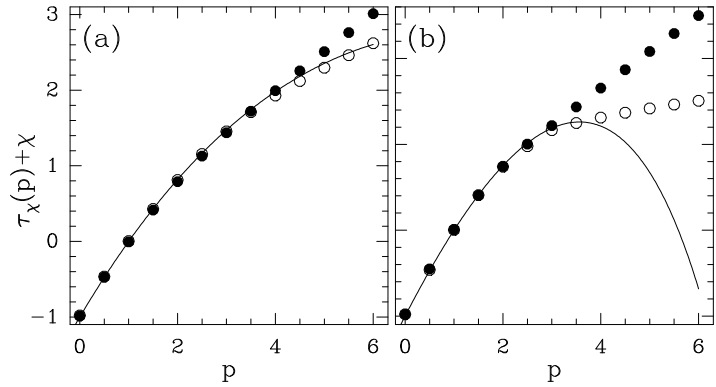}
\caption{\label{fig2} Estimates of the function $\tau_\chi(p)$ of
log-Poisson for $\chi = 0$ ($\bullet$) and $\chi = 1$ ($\circ$). In
both models we chose $T = 2^{13}$ and $\lambda^2 = 0.2$. (a)
Log-Poisson case with $\delta = -0.1$. (b) Log-Gamma case with
$\beta = 10$. Solid lines represent the curves $\tau(p)$.}
\end{figure}

The log-Normal mixed asymptotic scaling exponents for $\chi =
0,0.5,1$ are represented in Fig. \ref{fig1}. For illustration
purpose we have plotted $\tau_\chi(p) + \chi$ as a function of $p$:
one clearly observes that, as the value of $\chi$ increases, the
value of $p^+_\chi$ below which the function is linear, also
increases while the value of the slope $h_\chi^+$ decreases. As
expected, when $\chi$ increases $\tau_\chi(p)+\chi$ matches
$\tau(p)$ over an increasing range of $p$ values. Notice that the
estimated exponents are very close the analytical predictions as
represented by the dashed lines. Error bars on the mean value
estimates are simply computed from the estimated r.m.s. over the 130
trials and are reported only for the $\chi = 0$ curve. We can see 
that these errors are smaller or close to the symbol tickness.

In Fig. \ref{fig2} are reported estimates of $\tau_\chi(p)$, $\chi =
0,1$ for log-Poisson (fig. \ref{fig2}(a)) and log-Gamma (fig.
\ref{fig2}(b)) samples. The solide lines represent the theoretical
$\tau(p)$ functions for both models as provided by Eqs.
\eqref{eq:taulp} and \eqref{eq:taulg}. We used the same estimation
procedure as for the log-normal case. One sees that, in both cases,
since the intermittency coefficient is the same for the three
models, the classical $\tau_0(p)$ curves are very similar to the
log-normal curve (Fig. \ref{fig1}). However, these models behave
very differently in mixed regime: for $\chi = 1$, log-Poisson and
log-Gamma both estimated scaling exponents become closer to the
respective values of $\tau(p)$ and are very easy to distinguish.
Let us mention that such a analysis has been recently performed by two of us
in order to distinguish two popular log-normal and log-Poisson
models for spatial fluctuations of energy dissipation in fully
developed turbulence \cite{MuzBaBaPo07}.


\appendix

\section{Lemma used for the proof of Theorem \ref{th:MSscaling}}
\label{app:lem12}
\begin{lem}
\label{lem:1} We have
\begin{equation*}
L_{N}^{1+\epsilon} \le C 2^{j\chi} \EE{\abs{
\mathcal{M}_j^{(0)}(p)}^{1+\epsilon} }
\end{equation*}
where  $L_{N}^{1+\epsilon}$ is defined by \eqref{eq:step1} and $C$
is a constant that depends only on $\epsilon$.
\end{lem}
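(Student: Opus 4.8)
The plan is to recognize that $\mathcal{N}_j(p)-\EE{\mathcal{N}_j(p)}$ is a centered sum of \emph{independent, identically distributed} terms, and then to invoke a moment inequality of von Bahr--Esseen type. Indeed, by \eqref{E:def_Nj} we may write $\mathcal{N}_j(p)-\EE{\mathcal{N}_j(p)}=\sum_{m=0}^{N_T-1}X_m$ with $X_m=\mathcal{M}^{(m)}_j(p)-\EE{\mathcal{M}^{(m)}_j(p)}$. Each $\mathcal{M}^{(m)}_j(p)$ is a function of the family $(W^{(m)}_r)_r$ together with the attached variables $Z^{(m,r)}$, all of which are measurable with respect to the randomness of the $m$-th cascade only; since the cascades are independent and the law of $Z^{(m,r)}$ does not depend on $(m,r)$, the $X_m$ are independent, centered, and all distributed as $X_0$.

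First I would apply the von Bahr--Esseen inequality for independent centered random variables, valid for an exponent $r=1+\epsilon$ in the admissible range $[1,2]$. Since the proof of Proposition \ref{P:cascade_conser_mixte} only ever takes $\epsilon>0$ small, in particular $\epsilon\le 1$, this range condition is met and one obtains
\begin{equation*}
\EE{\abs{\sum_{m=0}^{N_T-1}X_m}^{1+\epsilon}}\le 2\sum_{m=0}^{N_T-1}\EE{\abs{X_m}^{1+\epsilon}}=2\,N_T\,\EE{\abs{X_0}^{1+\epsilon}}.
\end{equation*}
Recalling that $N_T\sim 2^{j\chi}$, this already produces the claimed factor $2^{j\chi}$ up to a multiplicative constant, so that $L_N^{1+\epsilon}\le C\,2^{j\chi}\,\EE{\abs{X_0}^{1+\epsilon}}$.

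It then remains to replace the centered moment $\EE{\abs{X_0}^{1+\epsilon}}$ by the uncentered one $\EE{\abs{\mathcal{M}^{(0)}_j(p)}^{1+\epsilon}}$ appearing in the statement, which is routine bookkeeping: by the $c_r$-inequality,
\begin{equation*}
\EE{\abs{X_0}^{1+\epsilon}}\le 2^{\epsilon}\Big(\EE{\abs{\mathcal{M}^{(0)}_j(p)}^{1+\epsilon}}+\abs{\EE{\mathcal{M}^{(0)}_j(p)}}^{1+\epsilon}\Big),
\end{equation*}
and Jensen's inequality bounds $\abs{\EE{\mathcal{M}^{(0)}_j(p)}}^{1+\epsilon}\le \EE{\abs{\mathcal{M}^{(0)}_j(p)}^{1+\epsilon}}$, giving $\EE{\abs{X_0}^{1+\epsilon}}\le C\,\EE{\abs{\mathcal{M}^{(0)}_j(p)}^{1+\epsilon}}$. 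Absorbing all numerical factors into a single constant $C$ depending only on $\epsilon$ yields the lemma. The only genuine input is the von Bahr--Esseen inequality, so I expect the sole point of care to be quoting it with the correct admissible exponent range $[1,2]$ and verifying that $1+\epsilon$ lands there; beyond that I foresee no real obstacle.
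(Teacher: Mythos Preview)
Your proof is correct and follows essentially the same route as the paper: the paper also cites the von Bahr--Esseen inequality for centered independent summands with exponent $1+\epsilon\in[1,2]$, applies it to $X_m=\mathcal{M}^{(m)}_j(p)-\EE{\mathcal{M}^{(m)}_j(p)}$ using that these are i.i.d., and then removes the centering via Jensen's inequality. The only cosmetic difference is that you name the $c_r$-inequality explicitly where the paper writes the splitting directly.
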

\begin{proof}
According to \cite{BahEss65}, if $\epsilon \in [0,1]$ and if
$\{X_{i}\}_{1\le i\le P}$ are centered independent random variables
one has
\begin{equation*} \EE{\abs{\sum_{i=1}^{P}
X_{i}}^{1+\epsilon}} \le C \sum_{i=1}^{P}
\EE{\abs{X_{i}}^{1+\epsilon}},
\end{equation*}
 where $C$ is a constant
that depends only on $\epsilon$ (and neither on the law of $X$ nor
on $P$). Applying it with $P = N_{T} =  \lfloor 2^{j\chi} \rfloor $
to the expression \eqref{E:def_Nj} of $\mathcal{N}_j(p)$, and using
the fact that the random variables $\{\mathcal{M}^{(m)}_j(p) \}_m$
defined by \eqref{E:def_Mj} are i.i.d, one gets
\begin{align*}
 L_{N}^{1+\epsilon} &\le C 2^{j\chi}  \EE{\left| \mathcal{M}_j^{(0)}(p) -
\EE{\mathcal{M}_j^{(0)}(p)}\right|^{1+\epsilon}}
\\ & \le C 2^{j\chi} \left(\EE{ \abs{ \mathcal{M}_j^{(0)}(p) }^{1+\epsilon} }+
\EE{\abs{\mathcal{M}_j^{(0)}(p)}}^{1+\epsilon}\right).
\end{align*}
Using the Jensen inequality we get the result.
\end{proof}
\begin{lem} Assume that $\EE{|Z|^{1+\epsilon}}<\infty$. Then we have for all $m$,
\label{lem:2}
\begin{equation*}
 \EE{ \abs{ \mathcal{M}^{(m)}_j(p) }^{1+\epsilon} } \le C 2^{-j(1+\epsilon)\tau(p)} \sum_{k=0}^{j} 2^{-k\tau(p(1+\epsilon))} 2^{k(1+\epsilon)\tau(p)},
\end{equation*}
 where $C$ is a constant that depends only on $p$ and
$\epsilon$.
\end{lem}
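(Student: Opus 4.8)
The plan is to exploit the multiplicative tree structure of $\mathcal{M}^{(m)}_j(p)$ and reduce its $(1+\epsilon)$-th moment to a sum of contributions, one per generation of the cascade, by a martingale decomposition along the filtration generated by the successive generations of weights. Throughout I fix $m$ and drop it, writing $S_j = 2^{jp}\mathcal{M}^{(m)}_j(p) = \sum_{|r|=j}\prod_{i=1}^j (W_{r|i})^p\, Z^{(r)}$, so that $\EE{|\mathcal{M}_j(p)|^{1+\epsilon}} = 2^{-jp(1+\epsilon)}\EE{|S_j|^{1+\epsilon}}$. Let $\mathcal{F}_k=\sigma(W_r:|r|\le k)$ for $k=0,\dots,j$ and let $\mathcal{F}_{j+1}$ be the $\sigma$-field carrying all the randomness, so that $S_j$ is $\mathcal{F}_{j+1}$-measurable, the extra information in the last step being the variables $Z^{(r)}$, which are measurable with respect to weights of generation strictly larger than $j$. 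Setting $M_k=\EE{S_j\mid\mathcal{F}_k}$, the increments $D_k=M_k-M_{k-1}$ for $k=1,\dots,j+1$ are martingale differences, with $M_0=\EE{S_j}$ and $M_{j+1}=S_j$.

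First I would compute $M_k$ explicitly. For $k\le j$ the $Z^{(r)}$ are independent of $\mathcal{F}_j$ with common mean $\EE{Z}$, and the last $j-k$ factors of each product are jointly independent of $\mathcal{F}_k$ with mean $\EE{W^p}$, so $M_k=\EE{Z}\,\EE{W^p}^{j-k}\,2^{j-k}\sum_{|s|=k}\prod_{i=1}^k (W_{s|i})^p$. Taking the difference of two consecutive levels, for $k\le j$ the increment $D_k$ equals, up to the deterministic factor $\EE{Z}\,\EE{W^p}^{j-k}2^{j-k}$, the sum over the $2^{k-1}$ nodes $s$ of generation $k-1$ (so $|s|=k-1$) of $\prod_{i=1}^{k-1}(W_{s|i})^p\,\sum_{b\in\{0,1\}}\big((W_{sb})^p-\EE{W^p}\big)$, where the prefix weights are $\mathcal{F}_{k-1}$-measurable while the bracketed fluctuations are centered and independent across $s$; the last increment $D_{j+1}=\sum_{|r|=j}\prod_{i=1}^j (W_{r|i})^p\big(Z^{(r)}-\EE{Z}\big)$ collects the fluctuations of the $Z^{(r)}$.

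Next I would bound $\EE{|D_k|^{1+\epsilon}}$. Conditioning on $\mathcal{F}_{k-1}$ (resp. on $\mathcal{F}_j$ for $D_{j+1}$) turns the prefix weights into constants and leaves a sum of centered independent terms, to which the von Bahr--Esseen inequality \cite{BahEss65} applies. The single-node moments $\EE{|\sum_b((W_{sb})^p-\EE{W^p})|^{1+\epsilon}}$ and $\EE{|Z^{(r)}-\EE{Z}|^{1+\epsilon}}$ are finite constants by \eqref{eq:momentsW} and the hypothesis $\EE{|Z|^{1+\epsilon}}<\infty$; taking expectations of the prefix products through $\EE{\prod_{i=1}^{k-1}(W_{s|i})^{p(1+\epsilon)}}=\EE{W^{p(1+\epsilon)}}^{k-1}$ over the $2^{k-1}$ nodes yields $\EE{|D_k|^{1+\epsilon}}\le C\,\EE{W^p}^{(j-k)(1+\epsilon)}\,2^{(j-k)(1+\epsilon)}\,2^{k}\,\EE{W^{p(1+\epsilon)}}^{k}$, and $D_{j+1}$ is of the same order as the $k=j$ term.

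Finally I would assemble the estimates. Since $1<1+\epsilon\le 2$ and the $D_k$ are martingale differences, the building block $\EE{|X+Y|^{1+\epsilon}}\le \EE{|X|^{1+\epsilon}}+2\,\EE{|Y|^{1+\epsilon}}$, valid whenever $X$ is $\mathcal{G}$-measurable and $\EE{Y\mid\mathcal{G}}=0$ (again from \cite{BahEss65}), applied inductively along the filtration, gives $\EE{|S_j|^{1+\epsilon}}\le |\EE{S_j}|^{1+\epsilon}+2\sum_{k=1}^{j+1}\EE{|D_k|^{1+\epsilon}}$. Multiplying by $2^{-jp(1+\epsilon)}$ and substituting the identities $2^{1-p}\EE{W^p}=2^{-\tau(p)}$ and $2^{1-p(1+\epsilon)}\EE{W^{p(1+\epsilon)}}=2^{-\tau(p(1+\epsilon))}$, which follow from \eqref{eq:taup}, the mean term becomes the $k=0$ summand and the $k$-th increment becomes exactly $2^{-j(1+\epsilon)\tau(p)}\,2^{k(1+\epsilon)\tau(p)}\,2^{-k\tau(p(1+\epsilon))}$; the constant $C$ absorbs the factors depending on the law of $W$ and on $\EE{|Z|^{1+\epsilon}}$, none of which affect the dependence on $j$. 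The main obstacle is precisely that the tree summands indexed by $r$ are dependent, so the independent von Bahr--Esseen inequality cannot be applied to the sum over $r$ directly; the remedy is the generation-by-generation martingale decomposition above, which isolates one independent layer at a time, together with the martingale version of the inequality. Once this structure is in place, the passage to the $\tau$-notation is routine bookkeeping.
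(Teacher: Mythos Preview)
Your proof is correct and reaches the same bound, but the technique differs from the paper's. The paper does not use a martingale decomposition: it writes $X^2=Y+D$ as a sum of off-diagonal and diagonal terms, applies the sub-additivity of $x\mapsto x^{(1+\epsilon)/2}$ to get $\EE{|X|^{1+\epsilon}}\le \EE{|Y|^{(1+\epsilon)/2}}+\EE{D^{(1+\epsilon)/2}}$, and then regroups the off-diagonal pairs $(r_1,r_2)$ according to the length $k$ of their common prefix before applying Jensen's inequality. Your approach instead filters by generation, writes $S_j$ as a telescoping sum of martingale increments $D_k$, and bounds each via the von Bahr--Esseen inequality for conditionally independent centered summands. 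Both methods ultimately index the contributions by the branching depth $k$ and produce the identical summand $2^{-j(1+\epsilon)\tau(p)}2^{k(1+\epsilon)\tau(p)}2^{-k\tau(p(1+\epsilon))}$. Your martingale route is arguably more transparent and extends naturally to higher moments or other tree-indexed sums; the paper's squaring-and-regrouping argument is more elementary in that it avoids any martingale machinery and relies only on sub-additivity and Jensen, at the cost of a slightly more ad hoc bookkeeping of the pairs.
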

\begin{proof}
The proof of this result is very much inspired from
\cite{ResSamGilWil03}. Since the law of $\mathcal{M}^{(m)}_j(p)$ is
independent of $m$, we forget the supscript $m$ throughout the
proof. Using the definition \eqref{E:def_Mj}, one gets
\begin{equation}
\label{eq:interm}
   \EE{\abs{ \mathcal{M}_j(p)}^{1+\epsilon} } = 2^{-jp(1+\epsilon)}
\EE{\abs{\sum_{r\in\{0,1\}^j}  \prod_{i=1}^j W_{r|i}^p
Z^{(r)}}^{1+\epsilon}}.
\end{equation}
Let
\begin{equation}
\label{eq:X} X = 2^{-jp} \sum_{r\in\{0,1\}^j}  \prod_{i=1}^j
W_{r|i}^p Z^{(r)},
\end{equation}
then
\begin{equation*}
X^2 = 2^{-2jp} \sum_{r_{1}\in\{0,1\}^j}
\sum_{r_{2}\in\{0,1\}^j} \prod_{i=1}^j  W_{r_{1}|i}^pW_{r_{2}|i}^p
Z^{(r_1)} Z^{(r_2)}.
\end{equation*}
It can be rewritten as
\begin{equation}
\label{eq:X2bis}
X^2 = Y + D,
\end{equation}
where $Y$ corresponds to the non diagonal terms :
\begin{equation}
\label{eq:Y} Y = 2^{-2jp} \sum_{r_{1}\in\{0,1\}^j}
\sum_{\substack{r_{2}\in\{0,1\}^j\\ r_{2} \neq r_{1}}} \prod_{i=1}^j
W_{r_{1}|i}^pW_{r_{2}|i}^p Z^{(r_1)} Z^{(r_2)},
\end{equation}
and $D$ to the diagonal terms
\begin{equation}
\label{eq:D} D = 2^{-2jp} \sum_{r\in\{0,1\}^j} \prod_{i=1}^j
W_{r|i}^{2p}  \left(Z^{(r)}\right)^2.
\end{equation}
The left hand side of \eqref{eq:interm} is nothing but
$\EE{\abs{X}^{1+\epsilon}}$. By writing that
 $\EE{\abs{X}^{1+\epsilon}} =  \EE{\left(X^2\right)^{\frac{1+\epsilon}2}}$, using
 the sub-additivity of $x \mapsto x^{(1+\epsilon)/2}$,
 we get
 \begin{equation}
 \label{eq:Xeps}
 \EE{\abs{X}^{1+\epsilon}} \le \EE{|Y|^{\frac{1+\epsilon} 2}} + \EE{D^{\frac{1+\epsilon} 2}}.
 \end{equation}
Let us first work with the $Y$ term. We factorize the common
beginning of the words $r_1$ and $r_2$ in the expression
\eqref{eq:Y} of $Y$
\begin{equation*} Y = 2^{-2jp} \sum_{k=0}^{j-1}
\sum_{r\in\{0,1\}^k} \prod_{i=1}^{k} W_{r|i}^{2p} \sum_{\substack
{r_1,r_2\in\{0,1\}^{j-k} \\ r_1|0 \neq r_2|0}}~ \prod_{i=k+1}^j
W_{rr_{1}|i}^pW_{rr_{2}|i}^p Z^{(r_1)} Z^{(r_2)}.
\end{equation*}
Again by the sub-additivity of $x \mapsto x^{(1+\epsilon)/2}$ and
using the fact that the  $W_{r|i}$ are i.i.d., one gets
\begin{eqnarray*}
  \EE{|Y|^{\frac {1+\epsilon} 2}}&  \le  &
 2^{-jp(1+\epsilon)} \sum_{k=0}^{j-1} \EE{W^{p(1+\epsilon)}}^k   \sum_{r\in\{0,1\}^k}
 \\
  &
  &
 \EE{
\Big( \sum_{\substack {r_1,r_2\in\{0,1\}^{j-k} \\ r_1|0 \neq
r_2|0}}~ \prod_{i=k+1}^j
 W_{rr_{1}|i}^pW_{rr_{2}|i}^p
|Z^{(r_1)} Z^{(r_2)}| \Big)^{(1+\epsilon)/2}},
\end{eqnarray*}
and by using Jensen inequality
\begin{eqnarray}
\nonumber
  \EE{|Y|^{\frac {1+\epsilon} 2}} & \le &
2^{-jp(1+\epsilon)} \sum_{k=0}^{j-1}
\EE{W^{p(1+\epsilon)}}^k
\sum_{r\in\{0,1\}^k} \\
\label{eq:interm3}  & &\Big( \sum_{\substack
{r_1,r_2\in\{0,1\}^{j-k} \\ r_1|0 \neq r_2|0}}~ \prod_{i=k+1}^j
\EE{W_{rr_{1}|i}^p W_{rr_{2}|i}^p} \EE{ |Z^{(r_1)} Z^{(r_2)}| }
\Big)^{(1+\epsilon)/2}.
\end{eqnarray}
The variables $Z^{(r_1)}$ and $Z^{(r_2)}$ are independant with
finite expectation, thus the term
 $\EE{|Z^{(r_1)} Z^{(r_2)}| }$ is bounded by a constant $C$.
Using $\prod_{i=k+1}^j \EE{W_{rr_{1}|i}^pW_{rr_{2}|i}^p} =
\EE{W^p}^{2(j-k)}$, we deduce:
\begin{eqnarray*}
 \EE{|Y|^{\frac{1+\epsilon} 2}} &\le&   C 2^{-jp(1+\epsilon)}
 \sum_{k=0}^{j-1} \EE{W^{p(1+\epsilon)}}^k
\EE{W^p}^{(j-k)(1+\epsilon)} \\
\nonumber
&& \sum_{r\in\{0,1\}^k}
|
\sum_{\substack {r_1,r_2\in\{0,1\}^{j-k} \\ r_1|0 \neq r_2|0}}~ 1
|^{(1+\epsilon)/2}.
\end{eqnarray*}
There are $2^k$ possible values for $r$ and less than $2^{2(j-k)}$ values for the couple $(r_1,r_2)$, thus
\begin{equation*}
 \EE{|Y|^{\frac{1+\epsilon} 2}} \le
  2^{-jp(1+\epsilon)} K
 \sum_{k=0}^{j-1} \EE{W^{p(1+\epsilon)}}^k
\EE{W^p}^{(j-k)(1+\epsilon)} 2^k 2^{(j-k)(1+\epsilon)}.
\end{equation*}
Since $2^{-j\tau(p)} = 2^{-jp} 2^{j} \EE{W^p}^j$
\begin{equation}
\label{eq:Yfinal}
 \EE{\abs{Y}^{\frac{1+\epsilon} 2}}  \le K 2^{-j(1+\epsilon)\tau(p)} \sum_{k=0}^{j-1} 2^{-k\tau(p(1+\epsilon))} 2^{k(1+\epsilon)\tau(p)}.
 \end{equation}
 Let us now take care of the diagonal terms  of $X$ \eqref{eq:D}.
 First, we write
 \begin{equation*}
 D^{\frac{1+\epsilon} 2} \le
2^{-jp(1+\epsilon)} \sum_{r\in\{0,1\}^j} \prod_{i=1}^j
W_{r|i}^{p(1+\epsilon)} |Z^{(r)}|^{1+\epsilon},
 \end{equation*}
 and using the $\EE{\abs{Z}^{1+\epsilon}}< \infty$ we deduce that
 \begin{equation}
\label{eq:Dfinal}
 \EE{D^{\frac{1+\epsilon} 2}} \le C
2^{-jp(1+\epsilon)} 2^j  \EE{W^{p(1+\epsilon)}}^j  = C 2^{-j\tau(p(1+\epsilon))}.
 \end{equation}
 Merging \eqref{eq:Yfinal} and \eqref{eq:Dfinal} into \eqref{eq:Xeps} leads to
 \begin{equation*}
 \EE{\abs{X}^{1+\epsilon}}  \le K 2^{-j(1+\epsilon)\tau(p)} \sum_{k=0}^{j} 2^{-k\tau(p(1+\epsilon))} 2^{k(1+\epsilon)\tau(p)},
 \end{equation*}
 and since $\EE{\abs{\mathcal{M}_j(p)}^{1+\epsilon} }  =  \EE{ \abs{X}^{1+\epsilon}} $, it completes the proof.
\end{proof}

\section{Lemma used for the proof of Theorem \ref{th:wMSscaling}}
\label{app:lem34}
\begin{lem} \label{L:scaling_mu} Let $f: [0,1] \to \R$ be some
Borel function whose support is included in
$I_{a,\overline{r}}=[\frac{{\overline{r}}}{2^a},\frac{({\overline{r}}+1)}{2^a}
]$ for $r \in \{0,1 \}^a$, $a \ge 0$. Then
\begin{equation} \label{E:scaling_f_mu}
\BB{\mu_{\infty}}{f}=2^{-a} \left( \prod_{i=1}^a W_{r \mid i}
\right)
 \BB{\overline{\mu}_{\infty}^{(r)}}{ \tilde{f}}
\end{equation}
where $\tilde{f}(x)=f(2^{-a}(x+\overline{r}))$ and
$\overline{\mu}_{\infty}^{(r)}$ is a cascade measure on $[0,1]$
measurable with respect to the sigma field $\sigma \{ W_{rr'}, r'
\in \{0,1\}^{a'}, a' \ge 1 \}$.
\end{lem}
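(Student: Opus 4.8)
The plan is to reduce the statement to the Mandelbrot star equation \eqref{E:auto_sim_simple} by first establishing a measure-level self-similarity identity and then integrating $f$ against it. Introduce the affine bijection $\phi:[0,1]\to I_{a,\overline r}$ given by $\phi(x)=2^{-a}(x+\overline r)$, so that $\tilde f=f\circ\phi$, and let $\overline{\mu}_{\infty}^{(r)}$ be the $\mathcal M$-cascade measure on $[0,1]$ built from the shifted family of weights $\{W_{rr'}\}_{r'\in\cup_{a'\ge1}\{0,1\}^{a'}}$; by construction it is measurable with respect to the stated $\sigma$-field. The main step is to prove the pushforward identity
\[
\mu_{\infty}\big|_{I_{a,\overline r}}
= 2^{-a}\Big(\prod_{i=1}^{a} W_{r\mid i}\Big)\,\phi_*\overline{\mu}_{\infty}^{(r)} .
\]

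To obtain this, I would verify equality on the dyadic subintervals $I_{j,\overline{rr'}}\subseteq I_{a,\overline r}$ for $r'\in\{0,1\}^{j-a}$ and $j\ge a$. Applying \eqref{E:auto_sim_simple} at level $j$ to the word $rr'$ and splitting the weight product as $\prod_{i=1}^{j}W_{rr'\mid i}=\big(\prod_{i=1}^{a}W_{r\mid i}\big)\big(\prod_{i'=1}^{j-a}W_{r(r'\mid i')}\big)$, one recognizes the trailing product together with the grand-descendant cascade as exactly \eqref{E:auto_sim_simple} applied to $\overline{\mu}_{\infty}^{(r)}$ on $I_{j-a,\overline{r'}}$, which gives
\[
\mu_{\infty}(I_{j,\overline{rr'}})
= 2^{-a}\Big(\prod_{i=1}^{a} W_{r\mid i}\Big)\,\overline{\mu}_{\infty}^{(r)}(I_{j-a,\overline{r'}}).
\]
A direct endpoint computation, using $\overline{rr'}=2^{j-a}\overline r+\overline{r'}$, shows $\phi(I_{j-a,\overline{r'}})=I_{j,\overline{rr'}}$, so this is precisely the claimed pushforward identity evaluated on dyadic intervals. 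Since the dyadic subintervals of $I_{a,\overline r}$ form a $\pi$-system generating its Borel $\sigma$-algebra and both sides are finite measures, a standard monotone-class (uniqueness of measures) argument extends the identity to all Borel subsets of $I_{a,\overline r}$.

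The conclusion then follows immediately by integration. Because $f$ is supported in $I_{a,\overline r}$, the measure identity and the change-of-variables formula for pushforwards, $\int f\,d(\phi_*\nu)=\int f\circ\phi\,d\nu$, together with $\tilde f=f\circ\phi$, yield
\[
\BB{\mu_{\infty}}{f}
= \int_{I_{a,\overline r}} f\,d\mu_{\infty}
= 2^{-a}\Big(\prod_{i=1}^{a} W_{r\mid i}\Big)\int f\,d\big(\phi_*\overline{\mu}_{\infty}^{(r)}\big)
= 2^{-a}\Big(\prod_{i=1}^{a} W_{r\mid i}\Big)\BB{\overline{\mu}_{\infty}^{(r)}}{\tilde f},
\]
which is \eqref{E:scaling_f_mu}.

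I expect the main obstacle to be the bookkeeping in the first step: one must carefully match the grand-descendant cascade $\overline{\mu}_{\infty}^{(rr')}$ produced when \eqref{E:auto_sim_simple} is applied to the long word $rr'$ with the descendant-of-descendant cascade produced when \eqref{E:auto_sim_simple} is applied \emph{inside} $\overline{\mu}_{\infty}^{(r)}$, i.e. one must check that the two successive invocations of the star equation compose correctly at the level of the underlying weights. The remaining ingredients, namely the endpoint verification $\phi(I_{j-a,\overline{r'}})=I_{j,\overline{rr'}}$ and the passage from dyadic intervals to arbitrary Borel sets, are routine.
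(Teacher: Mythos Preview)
Your proposal is correct and follows essentially the same route as the paper: verify the identity on characteristic functions of dyadic subintervals $I_{a+a',\overline{rr'}}$ via the star equation \eqref{E:auto_sim_simple}, then extend to arbitrary Borel $f$ by standard measure-theoretic arguments. Your formulation through the pushforward identity and the change-of-variables formula is more explicit than the paper's two-line sketch, but the underlying argument is the same.
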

\begin{proof} The scaling relation \eqref{E:scaling_f_mu} is
easily obtained, by the definition of the measure $\mu_\infty$, if
$f$ is the characteristic function of some interval
$I_{a+a',\overline{rr'}}$ where $r' \in \{0,1\}^{a'}$, $a' \ge 0$.
This relation extends to any Borel function $f$ by standard
arguments of measure theory.
\end{proof}
\begin{lem} \label{L:mu_f_pos} Let $h: [0,1] \to \R$ be a piecewise
continuous, non zero, function. Then
$\EE{\abs{\BB{\mu_{\infty}}{h}}^p}> 0$ for all $p>0$.
\end{lem}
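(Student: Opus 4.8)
The plan is to first reduce the statement to a non-degeneracy property of the pairing $\BB{\mu_{\infty}}{h}$, and then to exploit the self-similarity of the cascade twice. Since $p>0$ and $\abs{\BB{\mu_{\infty}}{h}}^p\ge 0$, one has $\EE{\abs{\BB{\mu_{\infty}}{h}}^p}>0$ if and only if $\BB{\mu_{\infty}}{h}$ is not almost surely zero; so it suffices to prove $\PP(\BB{\mu_{\infty}}{h}\neq 0)>0$, and I would argue by contradiction, assuming $\BB{\mu_{\infty}}{h}=0$ almost surely. To localize, I would use that $h$ is piecewise continuous and nonzero on an interval (by (H2)--(H3)) to find a continuity point $t_0$ with $h(t_0)\neq 0$; by continuity $h$ keeps a constant sign and stays bounded away from $0$ on a neighbourhood of $t_0$, which, for $a$ large enough, contains a dyadic interval $I_{a,\overline{r^*}}$. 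Replacing $h$ by $-h$ if necessary (which leaves the event $\{\BB{\mu_{\infty}}{h}=0\}$ invariant), I may assume $h\ge c>0$ on $I_{a,\overline{r^*}}$.

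Next I would decompose $\BB{\mu_{\infty}}{h}=\sum_{r\in\{0,1\}^a}\int_{I_{a,\overline r}}h\,d\mu_{\infty}$ and apply Lemma~\ref{L:scaling_mu} to each term, writing $\int_{I_{a,\overline r}}h\,d\mu_{\infty}=2^{-a}\bigl(\prod_{i=1}^a W_{r|i}\bigr)X_r$ with $X_r=\BB{\overline{\mu}_{\infty}^{(r)}}{\tilde h_r}$ and $\tilde h_r(x)=h(2^{-a}(x+\overline r))$. The crucial structural observation is that, by Lemma~\ref{L:scaling_mu}, $X_{r^*}$ is measurable with respect to the weights strictly extending $r^*$, hence is independent both of $c_0:=\prod_{i=1}^a W_{r^*|i}$ and of the sum $V$ of all the remaining terms. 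Isolating the $r^*$-term in the identity $0=2^{-a}\sum_r\bigl(\prod_i W_{r|i}\bigr)X_r$ yields $c_0 X_{r^*}=V$ with $X_{r^*}$ independent of $(c_0,V)$; since $c_0>0$ almost surely, $X_{r^*}=V/c_0$ with the right-hand side independent of $X_{r^*}$, which forces $X_{r^*}$ to be almost surely constant. Thus $\BB{\overline{\mu}_{\infty}^{(r^*)}}{\tilde h_{r^*}}$ is almost surely constant, where $\overline{\mu}_{\infty}^{(r^*)}$ has the law of $\mu_{\infty}$ and $\tilde h_{r^*}$ is continuous with $\tilde h_{r^*}\ge c>0$.

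It then remains to contradict the constancy of $\BB{\mu_{\infty}}{g}$ for a positive continuous test function $g:=\tilde h_{r^*}\ge c>0$. Applying Lemma~\ref{L:scaling_mu} once more at level $a=1$, I would write $\BB{\mu_{\infty}}{g}=\tfrac12\bigl(W_{(0)}\BB{\overline{\mu}_{\infty}^{(0)}}{g^{(0)}}+W_{(1)}\BB{\overline{\mu}_{\infty}^{(1)}}{g^{(1)}}\bigr)$, where the two summands are independent and each $g^{(i)}$ is continuous with $g^{(i)}\ge c$. If this sum is almost surely constant, the same independence argument forces each summand to be almost surely constant. Now $\BB{\overline{\mu}_{\infty}^{(0)}}{g^{(0)}}\ge c\,\overline{\mu}_{\infty}^{(0)}([0,1])>0$ almost surely, the positivity of the total mass being a consequence of the finiteness of the negative moments of $\mu_{\infty}([0,1])$ recalled in Section~\ref{Sec:mcdef}; hence the constant value of $W_{(0)}\BB{\overline{\mu}_{\infty}^{(0)}}{g^{(0)}}$ is strictly positive, so $W_{(0)}$ is a deterministic function of $\BB{\overline{\mu}_{\infty}^{(0)}}{g^{(0)}}$ and, being independent of it, is almost surely constant. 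Since $\EE{W}=1$ this constant equals $1$, contradicting $\PP(W=1)<1$.

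The main obstacle is the possible cancellation of signs in $\BB{\mu_{\infty}}{h}$: for a nonnegative test function the pairing satisfies $\BB{\mu_{\infty}}{h}\ge c\,\mu_{\infty}([0,1])>0$ directly, but a general $h$ may oscillate, so ruling out almost-sure cancellation requires the dyadic localization combined with the rigidity principle that a random variable coinciding with an independent one must be constant. The second application of self-similarity is then needed to dispose of the remaining degenerate scenario in which the localized pairing is almost surely a positive constant.
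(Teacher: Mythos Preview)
Your argument is correct, and it reaches the contradiction by a somewhat different organisation than the paper's. The paper iterates a single level-$1$ split: from $\BB{\mu_\infty}{h}=0$ it writes $W_0\BB{\overline\mu^{(0)}_\infty}{h^{(0)}}=-W_1\BB{\overline\mu^{(1)}_\infty}{h^{(1)}}$, notes that the two independent pairings must vanish simultaneously, and treats the dichotomy at each step: if both are a.s.\ nonzero the ratio $W_1/W_0$ equals an independent variable and must be constant, contradicting $\PP(W=1)<1$; if both are a.s.\ zero one iterates with $h^{(0)},h^{(1)}$ in place of $h$, until one lands on a dyadic interval where $h$ keeps a sign and vanishing is impossible. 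You instead localise directly at a deep level $a$ on which $h$ is one-signed, use independence of $X_{r^*}$ from the pair $(c_0,V)$ to force the localised pairing to be a.s.\ constant, and then a single further level-$1$ split produces the contradiction. The underlying rigidity principle is identical; your route trades the paper's iteration-with-dichotomy for one independence check at a general level $a$ (which is sound, since the subtree below $r^*$ is disjoint from all weights entering $c_0$ and $V$). One small remark: (H2)--(H3) are hypotheses on the analysing function $g$, not on $h$; the lemma assumes only that $h$ is piecewise continuous and nonzero, which is exactly what you use to find the interval $I_{a,\overline{r^*}}$.
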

\begin{proof}
By  contradiction, assume that for some $p>0$,
$\EE{\abs{\BB{\mu_{\infty}}{h}}^p}=0$. Hence
$\BB{\mu_{\infty}}{h}=0$, $\PP$-almost surely. But using Lemma
\ref{L:scaling_mu},
\begin{align*}
0=\BB{\mu_{\infty}}{h}&=  \BB{\mu_{\infty}}{h 1_{[0, 1/2]}}  +
\BB{\mu_{\infty}}{h 1_{(1/2, 1]}}
\\
&= \frac{1}{2} W_0 \BB{\overline{\mu}^{(0)}_{\infty}}{h^{(0)}} +
\frac{1}{2} W_1 \BB{\overline{\mu}^{(1)}_{\infty}}{h^{(1)}},
\end{align*}
where $h^{(0)}(\cdot)=h(2^{-1} \cdot )$, $h^{(1)}(\cdot)=h(2^{-1}
(\cdot +1))$ and $\overline{\mu}^{(0)}_{\infty}$,
$\overline{\mu}^{(1)}_{\infty}$ are independent cascade measures on
$[0,1]$.
 Thus we deduce $ W_0
\BB{\overline{\mu}^{(0)}_{\infty}}{h^{(0)}} =- W_1
\BB{\overline{\mu}^{(1)}_{\infty}}{h^{(1)}}$ almost surely, and
since $W>0$ this shows that with probability one the two independent
variables $\BB{\overline{\mu}^{(0)}_{\infty}}{h^{(0)}}$ and
$\BB{\overline{\mu}^{(1)}_{\infty}}{h^{(1)}}$ vanish simultaneously.
This is only possible either, if they both vanish on a set of full
probability, or if they both vanish on a negligible set. Assume the
latter, then the following identity holds almost surely
\begin{equation*}
\frac{\BB{\overline{\mu}^{(0)}_{\infty}}{h^{(0)}}}{\BB{\overline{\mu}^{(1)}_{\infty}}{h^{(1)}}}
=- \frac{W_1}{W_0}
\end{equation*}
where the variables on right and left hand side are independent.
These variables must be constant, which is excluded by the
assumption $\PP(W=1)<1$ (recall \eqref{eq:hypo1}).

Thus we deduce that the variables
$\BB{\overline{\mu}^{(i)}_{\infty}}{h^{(i)}}$ are almost surely
equal to zero. Hence:
$$\EE{ \abs{ \BB{
{\mu}_{\infty}}{h^{(i)} }}^p} =0, \text{ for $i=0,1$}.$$ Iterating
the argument we deduce the following property:  for any $j \ge 0$
and $k \le 2^j-1$, if we define a function on $[0,1]$ by
$h^{(j,k)}(x)=h(2^{-j}(x+k))$ we have
$$\EE{
\abs{ \BB{ {\mu}_{\infty}}{h^{(j,k)}} }^p }=0.$$ This is clearly
impossible if we choose $j,k$ such that $h$ remains positive (or
negative) on $[k2^{-j}, (k+1)2^{-j}]$. By the assumptions on $h$ one
can find such an interval, yielding to a contradiction.
\end{proof}
\begin{lem} \label{lem:3} We have
\begin{equation*} \EE{ \abs{\mSw -\EE{\mSw}}^{1+\epsilon} } \le C 2^{j\chi}
\EE{\abs{\Sw(j,p)}^{1+\epsilon} }
 \end{equation*}
 where $C$ is a constant that depends only
on $\epsilon$.
\end{lem}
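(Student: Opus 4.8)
The plan is to reproduce the argument of Lemma \ref{lem:1} verbatim, observing that $\mSw(j,p)=\sum_{m=0}^{N_T-1}\Sw^{(m)}(j,p)$ has exactly the same additive i.i.d.\ structure as $\mathcal{N}_j(p)=\sum_{m=0}^{N_T-1}\mathcal{M}^{(m)}_j(p)$ in \eqref{E:def_Nj}, with $\Sw^{(m)}(j,p)$ playing the role of $\mathcal{M}^{(m)}_j(p)$.

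First I would note that, since the measures $\{\mu_{\infty}^{(m)}\}_m$ are i.i.d.\ by construction, the random variables $\Sw^{(m)}(j,p)$ are i.i.d.\ with common law that of $\Sw(j,p)$. Writing the centered sum
\[
\mSw(j,p)-\EE{\mSw(j,p)}=\sum_{m=0}^{N_T-1}\left(\Sw^{(m)}(j,p)-\EE{\Sw^{(m)}(j,p)}\right)
\]
exhibits it as a sum of $N_T=\lfloor 2^{j\chi}\rfloor$ centered, independent, identically distributed terms. I would then apply the von Bahr--Esseen inequality \cite{BahEss65} (valid for $\epsilon\in[0,1]$, as used in Lemma \ref{lem:1}) with $P=N_T$, and use that the summands share the same law, to obtain
\[
\EE{\abs{\mSw(j,p)-\EE{\mSw(j,p)}}^{1+\epsilon}}\le C\,2^{j\chi}\,\EE{\abs{\Sw(j,p)-\EE{\Sw(j,p)}}^{1+\epsilon}}.
\]
Finally, exactly as at the end of the proof of Lemma \ref{lem:1}, the power inequality $\abs{a+b}^{1+\epsilon}\le 2^{\epsilon}(\abs{a}^{1+\epsilon}+\abs{b}^{1+\epsilon})$ together with Jensen's inequality $\EE{\abs{\Sw(j,p)}}^{1+\epsilon}\le\EE{\abs{\Sw(j,p)}^{1+\epsilon}}$ removes the centering and yields the stated bound.

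There is essentially no obstacle: the proof is a literal transcription of Lemma \ref{lem:1} with $\mathcal{M}^{(0)}_j(p)$ replaced by $\Sw(j,p)$. The only point deserving a word of care is the finiteness of $\EE{\abs{\Sw(j,p)}^{1+\epsilon}}$, needed for the above manipulations to be meaningful; but at fixed $j$ this is a finite sum of terms $\EE{\abs{\BB{\mu_{\infty}}{g_{j,k}}}^{p(1+\epsilon)}}$, each finite by the moment assumptions \eqref{eq:momentsW} on $W$ and the upper estimate of Proposition \ref{P:scaling_Sw}.
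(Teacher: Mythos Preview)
Your proposal is correct and follows exactly the paper's approach: the paper's proof of Lemma~\ref{lem:3} consists of the single sentence ``The proof is the exact same proof as for Lemma~\ref{lem:1},'' and you have faithfully transcribed that argument (von Bahr--Esseen on the i.i.d.\ centered summands $\Sw^{(m)}(j,p)-\EE{\Sw^{(m)}(j,p)}$, followed by Jensen to remove the centering). Your additional remark on finiteness of $\EE{\abs{\Sw(j,p)}^{1+\epsilon}}$ is a harmless clarification the paper leaves implicit.
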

\begin{proof}
The proof is the exact same proof as for Lemma \ref{lem:1}.
\end{proof}
\begin{lem}
\label{lem:4} For any $\epsilon>0$ small enough we have,
\begin{equation*}
 \EE{\abs{\Sw(j,p) }^{1+\epsilon}} \le K
2^{-j(1+\epsilon)\tau(p)} \sum_{k=0}^{j} 2^{-k\tau(p(1+\epsilon))}
2^{k(1+\epsilon)\tau(p)},
\end{equation*}
 where $K$ is a constant
that depends only on $p$ and $\epsilon$.
\end{lem}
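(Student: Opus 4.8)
The plan is to dominate $\Sw(j,p)$ pointwise by a constant multiple of the ordinary box partition function $\S(j,p)$ of \eqref{eq:partition}, and then to quote Lemma \ref{lem:2}, which already produces a bound of exactly the required shape. Indeed, comparing the representation \eqref{eq:Srep} of $\S(j,p)$ with the definition \eqref{E:def_Mj}, one sees that for a single cascade $\S(j,p)=\mathcal{M}^{(0)}_j(p)$ with the choice $Z^{(0,r)}=\bar\mu_{\infty}^{(0,r)}([0,T])^{p}$, i.e.\ $Z=\mu_{\infty}([0,T])^{p}$. Hence Lemma \ref{lem:2} gives
\[
\EE{\S(j,p)^{1+\epsilon}}\le C\,2^{-j(1+\epsilon)\tau(p)}\sum_{k=0}^{j}2^{-k\tau(p(1+\epsilon))}2^{k(1+\epsilon)\tau(p)},
\]
and it only remains to prove a deterministic inequality of the form $\Sw(j,p)\le C\,\S(j,p)$ with $C=C(p,g,J)$.

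To obtain the latter, I would first use that $g$ is bounded (by (H1)--(H2)) and that, by \eqref{eq:supppsi}, the support of $g_{j,k}$ splits into the $2^J$ dyadic cells $I_{j,k},\dots,I_{j,k+2^J-1}$, so that
\[
\abs{\BB{\mu_{\infty}}{g_{j,k}}}\le \norme{g}_\infty\,\mu_{\infty}([2^{-j}k,2^{-j}k+2^{J-j}])=\norme{g}_\infty\sum_{l=0}^{2^J-1}\mu_{\infty}(I_{j,k+l}).
\]
Raising this to the power $p>0$ and applying the elementary bound $\big(\sum_{l=0}^{2^J-1}a_l\big)^p\le 2^{J\max(p-1,0)}\sum_{l=0}^{2^J-1}a_l^p$ to the nonnegative numbers $a_l=\mu_{\infty}(I_{j,k+l})$, then summing over $k$ and exchanging the two finite sums, each inner sum $\sum_k\mu_{\infty}(I_{j,k+l})^p$ is a partial sum of $\S(j,p)$ and hence at most $\S(j,p)$, while the outer sum over $l$ contributes a further factor $2^J$. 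This yields $\Sw(j,p)\le \norme{g}_\infty^{p}\,2^{J(1+\max(p-1,0))}\,\S(j,p)$; taking $(1+\epsilon)$-th moments and inserting the displayed bound on $\EE{\S(j,p)^{1+\epsilon}}$ finishes the proof.

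The one point requiring genuine attention, and the reason for the hypothesis ``$\epsilon$ small enough'', is the applicability of Lemma \ref{lem:2}: it demands $\EE{\abs{Z}^{1+\epsilon}}=\EE{\mu_{\infty}([0,T])^{p(1+\epsilon)}}<\infty$. For the values of $p$ at which the lemma is invoked (namely $p<p_\chi^+$, under the standing assumption $\tau(p_\chi^+)>0$), Remark \ref{R:moment} together with the Kahane--Peyri\`ere moment criterion ensures finiteness of $\EE{\mu_{\infty}([0,T])^{p'}}$ for $p'$ slightly above $p$, so that $p(1+\epsilon)$ stays in the finite-moment range once $\epsilon$ is small. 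Everything else is deterministic bookkeeping; the only care needed is to distinguish the convexity estimate for $p\ge1$ from the subadditivity estimate for $0<p<1$.
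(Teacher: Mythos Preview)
Your argument is correct and genuinely simpler than the paper's. You reduce everything to the single deterministic inequality $\Sw(j,p)\le C\,\S(j,p)$ (valid for $p>0$ by the bound $\abs{\BB{\mu_\infty}{g_{j,k}}}\le\norme{g}_\infty\sum_{l<2^J}\mu_\infty(I_{j,k+l})$ together with the convexity/subadditivity of $x\mapsto x^p$), and then invoke Lemma~\ref{lem:2} once. The paper instead reorganizes the sum $\Sw(j,p)$ according to the depth $a$ of the deepest common dyadic ancestor of the cells covered by the support of $g_{j,k}$, writes $\Sw(j,p)=\sum_{a=0}^{j-J-1}\sum_{s\in\{0,1\}^J}X_{a,s}$, and then replays the diagonal/off-diagonal splitting of Lemma~\ref{lem:2} on each $X_{a,s}$ separately before summing in $a$ and $s$. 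Both proofs use the same crude upper bound $\abs{\BB{\mu_\infty}{g_{j,k}}}\le\norme{g}_\infty\,\mu_\infty(\mathrm{Supp}\,g_{j,k})$ at the start (so both are confined to $p>0$), and both ultimately need $\EE{\mu_\infty([0,T])^{p(1+\epsilon)}}<\infty$, which is exactly the ``$\epsilon$ small enough'' restriction you identify. Your shortcut avoids all the ancestor-level bookkeeping; the paper's decomposition does not buy extra generality here, though it is the natural template if one ever needs finer control on the wavelet coefficients than a blunt comparison with the box partition function can give.
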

\begin{proof}
The proof basically follows the same lines as the proof of Lemma
\ref{lem:2}. The only difficulty, compared to this latter proof,
comes from the fact that the quantity $\BB{\mu_{\infty}}{g_{j,k}}$ a
priori involves several nodes of level $j$ of the {\cal M}-cascade.
We have to reorganize the sum \eqref{eq:wpart}.

Since we are interested in the limit $j\rightarrow +\infty$, we can
suppose, with no loss of generality that $j>J$. In the following, we
note $1^{(n)}$  the $n$-uplet
\begin{equation*} 1^{(n)} =
11\ldots1,~~\mbox{where the 1 is repeated $n$ times}.
\end{equation*}
The partition function \eqref{eq:wpart} can be
written $\Sw(j,p) = \sum_{k} |\BB {\mu_{\infty}} {g_{j,\overline
k}}|^p, $ where the sum is over $k\in \{0,1 \}^j$ such that $k_l=0$
for some $l \le j-J$. The sum can be regrouped in the following way,
where $a+1$ denotes the position of the last $0$ in the $j-J$ first
components of $k$:
 \begin{equation*} \Sw(j,p) =
\sum_{a=0}^{j-J-1} \sum_{\substack{r\in \{0,1\}^{a} \\ q = r0}}
\sum_{s\in \{0,1\}^J} \left| \BB {\mu_{\infty}} {g_{j,
\overline{q1^{(j-J-1-a)}s}}} \right|^p.
\end{equation*}
We set
\begin{equation} \label{E:def_Xsa} X_{a,s} =
\sum_{\substack{r\in \{0,1\}^{a} \\ q = r0}} \left| \BB
{\mu_{\infty}} {g_{j, \overline{q1^{(j-J-1-a)}s}}} \right|^p
\end{equation}
and consequently
\begin{equation*}
\Sw(j,p) = \sum_{a=0}^{j-J-1} \sum_{s\in \{0,1\}^J} X_{a,s}.
\end{equation*}
Actually, $a$ exactly corresponds to the level of the ``highest''
node that is common for dyadic intervals in the support of $g_{j,
\overline{q1^{(j-J-1-a)}s}}$. Indeed, let us prove that
\begin{equation} \label{eq:supincl} a\ge 0,~\forall
s\in\{0,1\}^J,~~\mbox{Supp } g_{j, \overline{q1^{(j-J-1-a)}s}}
\subset I_{a,\overline r},
\end{equation}
 where $q =r 0$. Indeed,
according to \eqref{eq:supppsi}, the support of $g_{j,
\overline{q1^{(j-J-1-a)}s}}$ is  included in $
[2^{-j}\overline{q1^{(j-J-1-a)}s},
2^{-j}\overline{q1^{(j-J-1-a)}s}+2^{-(j-J)}].$
Then,
\begin{eqnarray*}
2^{-j}\overline{q1^{(j-J-1-a)}s} & = &  2^{-a} \overline{r} + \sum_{i=a+2}^{j-J} 2^{-i} + 2^{-j} \overline{s} \\
& = & 2^{-a} \overline{r} +  2^{-a-1} - 2^{-(j-J)} + 2^{-j} \overline{s}.
\end{eqnarray*}
Since $\overline s$ varies in $[0,2^J-1]$, and $a\le j-J-1$, it is easy to show that
\begin{equation*}
0\le 2^{-a-1} - 2^{-(j-J)} + 2^{-j} \overline{s},
\end{equation*}
and
\begin{equation*}
2^{-a-1} + 2^{-j} \overline{s} \le 2^{-a},
\end{equation*}
which proves \eqref{eq:supincl}.

\noindent We are now ready to compute the upper bound for $\norme{
\Sw(j,p)}_{L^{1+\epsilon}(\PP)} \le \sum_{a=0}^{j-J-1} \sum_{s\in
\{0,1\}^J}  \norme{ X_{s,a} }_{L^{1+\epsilon}(\PP)}$. Using
\eqref{E:def_Xsa}, \eqref{eq:supincl} and Lemma \ref{L:scaling_mu},
we get
\begin{equation*} X_{a,s} = 2^{-ap} \sum_{r\in\{0,1\}^a}
  \left( \prod_{i=1}^{a}
W_{r|i}^p \right)
|\BB{\overline{\mu}_{\infty}^{(r)}}{g_{j,\overline{01^{(j-J-1-a)}s}}}|^p,
\end{equation*}
where the $\overline{\mu}_{\infty}^{(r)}$ are independent cascade
measures on $[0,T]$.

Let us identify $X_{a,s}$ with $X$ as defined in Lemma \ref{lem:2}
by \eqref{eq:X} in which $j$ plays the role of $a$ and $Z^{(r)}$ of
$\BB{\overline{\mu}_{\infty}^{(r)}}{g_{j-a,\overline{01^{(j-J-1-a)}s}}}^p$.
As in \eqref{eq:X2bis}, we can decompose $X_{a,s}^2$ as the sum of
the non diagonal terms $Y_{a,s}$ and the diagonal terms $D_{a,s}$
\begin{equation} \label{E:eq_X2as}
X_{a,s}^2 = Y_{a,s} + D_{a,s}.
\end{equation}
Using the exact same development as the one we used  for
$\EE{X^{1+\epsilon}}$ starting at Eq. \eqref{eq:interm}, we get the
bound of the non diagonal terms corresponding to \eqref{eq:interm3}
in which the term $\EE{ Z^{(rr_{1})}Z^{(rr_{2})} } $ has to be
replaced by
\begin{equation*}
 \EE{
\left|\BB{\mu_{\infty}^{(rr_{1},s)}}{g_{j-a,\overline{01^{(j-J-1-a)}s}}}\right|^p
\left|\BB{\mu_{\infty}^{(rr_{2},s)}}{g_{j-a,\overline
{01^{(j-J-1-a)}s}}}\right|^p}
\end{equation*}
 which can be bounded
(using \eqref{eq:gscaling}) by $K2^{-2(j-a)(\tau(p)+1)}$. Going on
with the same arguments as in Lemma \ref{lem:2}, we finally get, the
bound for the non diagonal terms corresponding to \eqref{eq:Yfinal}
\begin{align*}
 \EE{\abs{Y_{a,s}}^{\frac{1+\epsilon} 2}}
 &\le K 2^{-j(1+\epsilon)\tau(p)} 2^{(a-j)(1+\epsilon)} \sum_{k=0}^{a-1} 2^{-k\tau(p(1+\epsilon))} 2^{k(1+\epsilon)\tau(p)}
 \\
& \le K 2^{-j(1+\epsilon)\tau(p)} 2^{(a-j)(1+\epsilon)}
\sum_{k=0}^{j-1} 2^{-k\tau(p(1+\epsilon))} 2^{k(1+\epsilon)\tau(p)}
 .
\end{align*}
 Following the arguments in Lemma \ref{lem:2} for the diagonal
terms, we get
\begin{equation} \label{E:borne_D}
\EE{ D_{a,s}^{\frac{1+\epsilon} 2}} \le 2^{-j\tau(p(1+\epsilon))}
2^{a-j}.
\end{equation}
By \eqref{E:eq_X2as}--\eqref{E:borne_D},  we finally get
\begin{equation*}
 \EE{\abs{X_{a,s}}^{1+\epsilon}} \le  K  2^{(a-j)} 2^{-j(1+\epsilon)\tau(p)}
\sum_{k=0}^{j} 2^{-k\tau(p(1+\epsilon))} 2^{k(1+\epsilon)\tau(p)}.
\end{equation*}
Then we write
 \begin{align*} \EE{\abs{\Sw(j,p)}^{1+\epsilon} } \le \norme{
\Sw(j,p)}_{L^{1+\epsilon}(\PP)}^{1+\epsilon} \le \left(
\sum_{a=0}^{j-J-1} \sum_{s\in \{0,1\}^J}  \norme{ X_{a,s}
}_{L^{1+\epsilon}(\PP)}  \right)^{1+\epsilon}
\end{align*}
and the lemma follows.
\end{proof}

\bibliography{paper_ver5-3_IMS}

\end{document}